\newtheorem{cor}{Corollary}[section]
\newtheorem{lemma}[cor]{Lemma}
\newtheorem{teo}[cor]{Theorem}
\newtheorem{algo}{Algorithm}[section]
\newtheorem{rem}[cor]{Remark}
\newtheorem{exm}[cor]{Example}
\newtheorem{assum}[cor]{Assumption}
\newcommand{\R}{\mathbb{R}}
\newcommand{\err}{{\rm err}}
\newcommand{\eoc}{{\rm eoc}}
\newcommand{\eps}{\varepsilon}
\numberwithin{equation}{section}
\begin{document}

\title{Convergence of a scheme for elastic flow with tangential mesh movement}

\author{Paola Pozzi 
\thanks{Fakult\"at f\"ur Mathematik, Universit\"at Duisburg-Essen, 
Thea-Leymann-Stra\ss e 9, 
45127 Essen, Germany, 
\url{paola.pozzi@uni-due.de}} 
\and 
and Bj\"orn Stinner 
\thanks{Mathematics Institute, 
Zeeman Building, 
University of Warwick, 
Coventry CV4 7AL, 
United Kingdom, 
\url{Bjorn.Stinner@warwick.ac.uk}} 
}

\date{\today}
\maketitle

\begin{abstract}
Elastic flow for closed curves can involve significant deformations. Mesh-based approximation schemes require tangentially redistributing vertices for long-time computations. We present and analyze a method that uses the Dirichlet energy for this purpose. The approach effectively also penalizes the length of the curve, and equilibrium shapes are equivalent to stationary points of the elastic energy augmented with the length functional. Our numerical method is based on linear parametric finite elements. Following the lines of \cite{DD09} we prove convergence and establish error estimates, noting that the addition of the Dirichlet energy simplifies the analysis in comparison with the length functional. We also present a simple semi-implicit time discretization and discuss some numerical result that support the theory. 
\end{abstract}

\noindent \textbf{Keywords:} 
elastic flow, gradient flow, finite element method, semi-discrete scheme
\\
\noindent \textbf{MSC(2020):} 
65M15,    
65M20,    
65M60,    
53E40,    
35K65     
\bigskip

\tableofcontents

\section{Introduction}

Elastic flow is the $L^2$-gradient flow of the bending energy 
\[
 \mathcal{E}(x) = \frac{1}{2} \int_{0}^{2\pi} |\vec{\kappa}|^2 ds
\]
for the elasticity of a rod that is modelled by a closed (i.e.~periodic) regular smooth curve \cite{truesdell}. Here, $x: [0,2\pi] \to \R^{n}$, $x=x(u)$, stands for its parametrisation, $ds = |x_{u}| du$ its length element, and $\vec{\kappa} = x_{ss} = \tau_s$ its curvature vector where $\tau = \partial_{s} x=\frac{x_{u}}{|x_{u}|}$ is a unit tangent vector. Because of the scaling properties of $\mathcal{E}$ (a circle of radius $R$ has energy $\pi/R \to 0$ as $R \to \infty$), one instead can consider \cite{DKS}
\begin{equation} \label{eq:energylength}
 \mathcal{E}_{\tilde{\lambda}}(x):=\mathcal{E}(x)+ \tilde{\lambda} \mathcal{L}(x)
\end{equation}
for some positive $\tilde{\lambda}$ where 
\[
 \mathcal{L}(x) = \int_{0}^{2\pi} ds= \int_{0}^{2\pi} |x_{u}| du
\]
is the length functional. The corresponding $L^{2}$-gradient flow of $\mathcal{E}_{\tilde{\lambda}}$ is given by
\begin{align}\label{flowDKS}
x_{t} = -\nabla_{s}^{2} \vec{\kappa} -\frac{1}{2} |\vec{\kappa}|^{2} \vec{\kappa} +\tilde{\lambda} \vec{\kappa}. 
\end{align}
Here $\nabla_{s} \phi =\partial_{s} \phi - (\partial_{s} \phi \cdot \tau) \tau$ denotes the normal component of the derivative with respect to arc length for a given vector field $\phi: [0,2\pi] \to \R^{n}$ along the curve. 
Note that the first variation of the energy functional $\mathcal{E}_{\tilde{\lambda}}$ yields a gradient in normal direction and thus does not ``suggest'' any tangential components. A more geometrical formulation (specifying the motion in the normal direction only) is thus given by
\begin{align}\label{gflow}
Px_{t}=x_{t} - (x_{t} \cdot \tau) \tau = -\nabla_{s}^{2} \vec{\kappa} -\frac{1}{2} |\vec{\kappa}|^{2} \vec{\kappa} +\tilde{\lambda} \vec{\kappa} 
\end{align}
where $P = I - \tau \otimes \tau$ with a unit tangent $\tau$ is the projection to the normal space. With other words, in \eqref{flowDKS} the velocity vector is entirely normal to the curve, whereas in the formulation \eqref{gflow} different tangential components are admissible. 

This geometric flow (and its variations) has been thoroughly studied in recent years both numerically and analytically (see for instance \cite{DKS}, \cite{Polden}, \cite{Koiso}, \cite{LangerSinger1}, \cite{Wen}, \cite{Wen2}, \cite{LinSch}, \cite{MPP20}, \cite{BGN12}, \cite{Bartels13}, \cite{OlRu10}, \cite{Poz15}, \cite{Bondarava}, \cite{DD09}, and references given in there).

From a numerical point of view, quantitative error analysis of a FEM scheme has been provided only in the case of \eqref{flowDKS} where the flow has no tangential component (see \cite{DD09}). However, the lack of tangential components has major disadvantages from a computational viewpoint, as the grid might degenerate quickly. 
In \cite{BGN12} the authors work with \eqref{gflow} and add a tangential movement that is beneficial for the mesh quality. 
They provide a FEM scheme for which a stability bound for a continuous in time semi-discrete scheme holds together with an equi-distribution mesh property. 
In \cite{Bartels13} the author considers the case of inextensible curves, therefore the problem of a possible degeneration of the mesh is ruled out by the formulation of the problem itself. A convergence analysis is presented, while a quantitative control of the error is missing. 

Here we are interested in the problem where the curves are permitted to change their length in time.
We would like to achieve two goals: good grid properties and a quantitative numerical error analysis.
Ruling out \eqref{flowDKS} because of the aforementioned grid degeneration problems, one way to tackle the problem would be to 
introduce a tangential component for \eqref{gflow} that is suitable both for analytical and numerical purposes. 
This step is sort of ``artificial'' but the task far from straightforward
because of the following two ``clashing'' points:  
\begin{itemize}
\item 
From an analytical point of view (for instance when showing short-time existence of the flow)
one is interested in tangential components that remove the degeneracy of the differential operator. 
\item 
From a numerical perspective it is usually advantageous to stay as close as possible to the gradient structure of the flow (for instance, for stability) and to derive a variational problem such that only operations are performed that are also admissible in a finite element setting. 
\end{itemize}

Our approach to the problem is to revisit the ideas behind the definition of elastic flow and find a suitable compromise between analysis and numerics.
Thus we propose a new model of elastic flow for which we are able to provide an error analysis of the semi-discrete scheme (similarly to \cite{DD09}) and such that the mesh presents good numerical properties. Analytical questions, such as long-time existence of the flow, are treated in 
a separate work~\cite{Paolapreprint}.  

Let us thus consider again the classical definition of elastic flow.
As noted above the penalization of the bending energy through the length functional is motivated by the fact that we do not wish the curve to flatten out at infinity. 
A penalization of the length as in \eqref{eq:energylength} is effective in this respect. 
There is however another way to penalize growth of the curve that also induces a nice control of the parametrization of the curve. 
Inspired by classical theory on minimal surfaces (\cite{DHS339}), our idea is to replace the length functional $\mathcal{L}(x)$ with the Dirichlet energy 
\begin{align*}
 \mathcal{D}(x):= \frac{1}{2} \int_{0}^{2\pi} |x_{u}|^{2} du 
\end{align*}
whose beneficial effects for the mesh quality in moving mesh methods have been already experienced in different ways, see for instance \cite{DD94,EF17} and related work. Thus, we want to minimize 
\begin{align} \label{newFun}
 \mathcal{D}_{\lambda}(x) := \mathcal{E}(x) + \lambda \mathcal{D}(x)
\end{align}
for some positive given $\lambda$.
Note that $ \mathcal{L}(x) \leq \sqrt{2\pi} \sqrt{2 \mathcal{D}(x)}$, whence $\mathcal{E}_{\lambda}(x)$ is dominated by $ \mathcal{D}_{\lambda}(x)$.
Noting that $\frac{x_{uu}}{|x_u|} \cdot \tau = (|x_u|)_s$, 
the $L^{2}$-gradient flow for $ \mathcal{D}_{\lambda}(x)$ is given by
\begin{align}
x_{t}& = -\nabla_{s}^{2} \vec{\kappa} -\frac{1}{2} |\vec{\kappa}|^{2} \vec{\kappa} +\lambda \frac{x_{uu}}{|x_{u}|} \label{1.2} \\
&= -\nabla_{s}^{2} \vec{\kappa} -\frac{1}{2} |\vec{\kappa}|^{2} \vec{\kappa} +\lambda \vec{\kappa} |x_{u}| + \lambda (|x_{u}|)_{s} \tau. \label{1.2b}
\end{align}

The flow is obviously different from \eqref{flowDKS} and \eqref{gflow} but the set of extremal points for the energies $\mathcal{E}_{\tilde{\lambda}}$ and $\mathcal{D}_{\lambda}$ are the same in the following sense: 
since $\mathcal{D}$ is not invariant under re-parametrization of the domain, the first variation in a stationary point $x$ of $\mathcal{D}_{\lambda}$ with respect to tangential deformations yields that $|x_{u}|=const$, 
so $x$ is parametrized by constant speed $|x_{u}|=:S_{x}$. 
Comparing \eqref{1.2b} with \eqref{flowDKS} we see that $x$ then also is a stationary point of $\mathcal{E} _{\tilde{\lambda}}$ for $\tilde{\lambda} = \lambda S_{x}$. 

Vice versa, if $x$ is stationary point for $\mathcal{E}_{\tilde{\lambda}}$, then its parametrization by constant speed, denoted by $S_{x}$ again for convenience, is a stationary point of $\mathcal{D}_{\lambda}$ with $\lambda = \tilde{\lambda} / S_{x}$. 
We also note that \eqref{1.2} is not a geometric flow: 
indeed, the Dirichtlet functional is not invariant under reparametrizations as the length functional. Its purpose here is to single out a tangential contribution to the parametrization that yields good mesh properties during the evolution.

In Section \ref{sec2} we present a FEM discretization very much in the style and spirit of \cite{DD09}. As a consequence, the main error estimates, see Theorem~\ref{mainteo} below, are analogous to those presented in \cite[Thm.~2.3]{DD09}: more precisely, after considering a mixed formulation of the flow, we prove a linear control with respect to the space grid parameter $h$ for suitable norms of the error of the parametrization $x$ and its curvature vector $\vec{\kappa}$. Notice, however, that the substitution of the length functional with the Dirichlet energy simplifies the analysis in a significant manner (see comments below Theorem~\ref{mainteo}). 
Numerical experiments in Section~\ref{sec:numerics} support the theoretical result. They also highlight the good mesh properties of our scheme in 2D and 3D. We also provide some comparisons with other schemes. Finally, although the Dirichlet energy is a great ally in the numerical analysis for the semi-discrete formulation, it is less clear how it could help in showing stability for the fully discrete scheme that we use in our computations (cp. also \cite{Bondarava} and \cite[Remark 6.4]{Poz15}). This aspect will be subject of future research. 

\bigskip
\noindent \textbf{Acknowledgements:} This project has been funded by the Deutsche Forschungsgemeinschaft (DFG, German Research Foundation)- Projektnummer: 404870139.

\section{Main results}
\label{sec2}

\subsection{Variational problem formulation}

We now consider the $L^{2}$-gradient flow for \eqref{newFun} and are interested in numerically 
approximating the evolution given in \eqref{1.2} for some positive fixed $\lambda>0$.
Before formulating the problem in a variational form that is amenable to a finite element approximation we first make a remark on how self-similar solutions to \eqref{1.2} and \eqref{flowDKS} compare.

\begin{exm}[Self-similar evolution of a circle parametrized by constant speed]\label{esempio}

Let $\lambda,\tilde{\lambda} \in \R^{+}$ and consider functions of the form $x(t,u) = R(t) (\cos u, \sin u)$. A direct computation gives that $x$ solves
\begin{align*}
 x_{t} = -\nabla_{s}^{2} \vec{\kappa} -\frac{1}{2} |\vec{\kappa}|^{2} \vec{\kappa} +\tilde{\lambda} \vec{\kappa}
\end{align*}
if and only if $\dot{R}=\frac{1}{2 R^{3}} -\frac{\tilde{\lambda}}{R} =\frac{1}{R^{3}}\left (\frac{1}{2}-\tilde{\lambda} R^{2} \right)$, 
which has the stationary solution $\tilde{R} = 1 /\sqrt{2\tilde{\lambda}}$. 

On the other hand, $x$ solves 
\begin{align*}
 x_{t} = -\nabla_{s}^{2} \vec{\kappa} -\frac{1}{2} |\vec{\kappa}|^{2} \vec{\kappa} +\lambda \frac{x_{uu}}{|x_{u}|}
\end{align*}
if and only if $\dot{R}=\frac{1}{2 R^{3}} -\lambda = \frac{1}{R^{3}} \left(\frac{1}{2} -\lambda R^{3} \right)$ 
with the stationary solution $\bar{R} = 1 / \sqrt[3]{2 \lambda}$. 
\end{exm}

Since our approach follows very closely the one presented in \cite{DD09}, we use for simplicity the same notation. In particular the curvature vector of $x$ is now denoted by $y$, $$ \vec{\kappa} = y \quad \mbox{in the following.} $$ 
The strong formulation \eqref{1.2} then becomes 
\begin{equation} \label{strongPDE}
  x_{t} = -\nabla_{s}^{2} y -\frac{1}{2} |y|^{2} y + \lambda \frac{x_{uu}}{|x_{u}|}, \quad y = \frac{1}{|x_{u}|} \Big{(} \frac{x_{u}}{|x_{u}|} \Big{)}_{u}. 
\end{equation}
The problem is closed by imposing an initial condition of the form
\begin{equation} \label{init_cond}
 x(0,\cdot) = x_{0}(\cdot) \quad \mbox{on } [0,2\pi)
\end{equation}
for given periodic initial data denoted by $x_{0}$. 
For the error analysis, we make the following assumptions: 
 
\begin{assum} \label{assum}
 We assume that $x_{0}$ is smooth and that \eqref{strongPDE} with \eqref{init_cond} has a smooth, periodic (in space) solution $x: [0,T] \times [0, 2\pi] \to \R^{n}$. We furthermore assume that there exist constants $0 < c_{0} \leq C_{0}$ such that
 \begin{align}\label{3.1}
  c_{0} \leq |x_{u}| \leq C_{0}, \qquad \qquad |y| \leq C_{0} \qquad \text{ in } [0,T] \times [0, 2\pi].
 \end{align}
\end{assum}

\noindent \textbf{Weak formulation.}
Following the same steps presented in \cite[\S~2]{DD09} in the computation of the first variation we consider the following weak form for the gradient flow of $\mathcal{D}_{\lambda}$: 
\begin{align}\label{2.2}
\int_{0}^{2\pi} (x_{t} \cdot \phi ) |x_{u}| - \int_{0}^{2\pi} \frac{P y_{u} \cdot \phi_{u}}{|x_{u}|} -\frac{1}{2}
\int_{0}^{2\pi} |y|^{2} (\tau \cdot \phi_{u} ) +\lambda \int_{0}^{2\pi} x_{u} \cdot \phi_{u} &=0\\\label{2.3}
\int_{0}^{2\pi} (y \cdot \psi) |x_{u}| + \int_{0}^{2\pi} (\tau \cdot \psi_{u} ) &=0
\end{align}
for all $\phi, \psi \in H^{1}_{per}(0, 2\pi)$ and $t \in [0,T]$, together with the initial condition \eqref{init_cond}. Here, $P$ denotes the normal projection matrix $P =I - \tau \otimes \tau$.

A key advantage of the above formulation is that \emph{no integration by parts} has been carried out in the derivation of the first variation. In particular, all steps performed hold also in the finite dimensional spaces that we will choose below.
This allows to recover very easily the equivalent of the energy estimate $\frac{d}{dt} \mathcal{D}_{\lambda}(x) \leq 0$ (see Remark~\ref{rem:energy} below).
Therefore, as mentioned in the introduction, we have stayed true to our principles, choosing a formulation that is close to the variational structure of the problem and that uses operations that are admissible in the discrete space.

\subsection{Spatial discretization}

We use the same notation adopted in \cite{DD09} with the exception that we denote the Euclidean scalar product with $\cdot$ instead of $(\cdot, \cdot)$. Let $0=u_{0} < u_{1} < \ldots <u_{N}= 2\pi$ be a partition of $[0,2\pi]$
into subintervals $I_{j}=[u_{j-1},u_{j}]$. Let $h_{j}:= u_{j} -u_{j-1}$, $h := \max_{j=1, \ldots, N} h_{j}$ and assume
\begin{align*}
h \leq \hat{c} h_{j} \qquad \text{ for all } j=1, \ldots, N
\end{align*}
for some positive $\hat{c}$ independent of $h$. 
Moreover let us assume that
\begin{align}\label{extra-ass}
|h_{j+1}-h_{j}| \leq \bar{c} h^{2} \qquad \text{ for all } j=1, \ldots, N \quad (\text{ and with } \quad h_{N+1}=h_{1}).
\end{align}
for some positive constant $\bar{c}$. 
For approximation purposes we consider the finite element space 
\[ 
 X_{h} = \{ \eta \in C^{0}([0, 2\pi]) \,: \, \eta_{h}\big{|}_{I_{j}} \text{ is linear }, j=1, \ldots, N, \,\text{ and } \eta_{h}(0)=\eta_{h}(2\pi) \} 
\]
with the usual nodal basis functions $\{ \varphi_{1}, \ldots, \varphi_{N} \}$. Let $I_{h}$ denote the standard Lagrange interpolation operator and recall the standard estimates:
\begin{align}\label{2.5}
\| f-I_{h} f\|_{L^{2}} + h \| (f-I_{h} f)_{u}\|_{L^{2}} &\leq Ch^{2} \| f \|_{H^{2}} \qquad \forall \; f \in H^{2}_{per}(0, 2\pi),\\
\label{nostra}
\|(I_{h}f)_{u}\|_{L^{2}} &\leq C \|f _{u} \|_{L^{2}} \qquad \forall \; f \in H^{1}_{per}(0, 2\pi).
\end{align} 
Moreover recall the following useful estimates (\cite[$(2.6),(2.7)$]{DD09})
\begin{align}
\label{2.6}
\int_{I_{j}} |\phi_{h}|^{2} &\leq \int_{I_{j}} I_{h}(|\phi_{h}|^{2}) \leq C\int_{I_{j}} |\phi_{h}|^{2}, \\
\label{2.7}
\int_{I_{j}} \phi_{h} \cdot \psi_{h}& =\int_{I_{j}} I_{h} (\phi_{h} \cdot \psi_{h} ) -\frac{1}{6} h_{j}^{2} \int_{I_{j}} \phi_{hu} \cdot \psi_{hu}
\end{align}
for all $j=1, \ldots, N$ and $\phi_{h}, \psi_{h} \in X_{h}^{n}$. 
In the following analysis we will also make use of the standard inverse estimates
\begin{align}\label{invest}
\|\phi_{hu}\|_{L^{2}} \leq C h^{-1} \| \phi_{h}\|_{L^{2}},\qquad \qquad \|\phi_{hu}\|_{L^{\infty}} \leq C h^{-1} \| \phi_{h}\|_{L^{\infty}}
\end{align}
which hold for $\phi \in X_{h}^{n}$. Next, set
$$ Z_{h} :=\{ z_{h}: [0, 2\pi] \to \R^{n} \, : \, z_{h}|_{I_{j}} \text{ is constant }, j=1, \ldots, N\}$$
and define $Q_{h}: H^{1} ((0, 2\pi), \R^{n}) \to Z_{h}$ by $ (Q_{h} f)|_{I_{j}} := \frac{1}{|I_{j}|} \int_{I_{j}} f$, $j=1, \ldots, N$.
Notice that by \cite[$(2.8)$]{DD09} we have
\begin{align}\label{2.8}
\| f - Q_{h}f \|_{L^{2}} \leq C h \| f \|_{H^{1}} \qquad \forall f \in H^{1} ((0, 2\pi), \R^{n}).
\end{align}

The semi-discrete problem corresponding to \eqref{2.2}, \eqref{2.3} reads as follows: \\
find $x_{h},y_{h}:[0,T] \times [0, 2\pi] \to \R^{n}$ such that
$x_{h}(t, \cdot), y_{h}(t, \cdot) \in X_{h}^{n}$, $0 \leq t \leq T$ and
\begin{align}\label{2.9}
\int_{0}^{2\pi}I_{h} (x_{ht} \cdot \phi_{h} ) |x_{hu}| - \int_{0}^{2\pi} \frac{P_{h} y_{hu} \cdot \phi_{hu}}{|x_{hu}|} 
 -\frac{1}{2}
\int_{0}^{2\pi} I_{h}(|y_{h}|^{2}) (\tau_{h} \cdot \phi_{hu} ) \\
+ \lambda \int_{0}^{2\pi} x_{hu} \cdot \phi_{hu} &=0 \notag
\\\label{2.10}
\int_{0}^{2\pi} I_{h}(y_{h} \cdot \psi_{h}) |x_{hu}| + \int_{0}^{2\pi} (\tau_{h} \cdot \psi_{hu} ) &=0\\\label{2.11}
x_{h}(0, \cdot) &= I_{h} x_{0}
\end{align}
for all $\phi_{h}, \psi_{h} \in X_{h}^{n}$ and $t \in [0,T]$. Here $P_{h}$ denotes the discrete normal projection matrix $P_{h} =I - \tau_{h} \otimes \tau_{h}$ with $\tau_{h}=\frac{x_{hu}}{|x_{hu}|}$.

\begin{rem}\label{rem:energy}
The energy estimates in the discrete setting are derived in exactly the same way as in the continuous setting.
More precisely:
\begin{itemize}
\item{\rm (Step 1)} take $\phi_{h}=x_{ht} $ in \eqref{2.9}; 
\item{\rm (Step 2)} take $\psi_{h}=y_{h}$ in $( \ref{2.10})_{t}$ that is \eqref{2.10} differentiated with respect to time;
\item{\rm (Step 3)} combine the two equations to obtain
\begin{align}\label{enestimate}
\int_{0}^{2\pi} I_{h} (|x_{ht}|^{2} ) |x_{hu}| + \frac{d}{dt} \left \{ 
\frac{1}{2} \int_{0}^{2\pi} I_{h} (|y_{h}|^{2}) |x_{hu}| + \frac{\lambda}{2} \int_{0}^{2\pi} |x_{hu}|^{2}
\right \}=0. 
\end{align}
\end{itemize}
The reason for highlighting the structure is that the error analysis follows essentially the same steps.
\end{rem}

\subsection{Statement of the main results}

As pointed out in \cite{DD09} the initial datum \eqref{2.11} determines the curvature vector $y_{h}(0, \cdot) \in X_{h}^{n}$ through \eqref{2.10}, that is through the relation
\begin{align}\label{2.12}
\int_{0}^{2\pi} I_{h} (y_{h}(0, \cdot) \cdot \psi_{h} ) |(I_{h} x_{0})_{u}| + \int_{0}^{2\pi} \frac{(I_{h} x_{0})_{u} \cdot \psi_{hu}}{|(I_{h} x_{0})_{u}|} = 0
\end{align}
for all $\psi_{h} \in X_{h}^{n}$. Furthermore the following approximation result holds:
\begin{lemma}\label{lem2.2}
For $h \leq h_{0}$, with $h_0$ sufficiently small, we have that 
$$ \| y (0, \cdot)-y_{h}(0, \cdot)\|_{L^{2}} \leq Ch.$$
\end{lemma}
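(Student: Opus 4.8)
The plan is to compare $y_h(0,\cdot)$, defined by the discrete relation \eqref{2.12}, with an interpolant of the true curvature $y(0,\cdot)$, and estimate the difference using the weak formulation \eqref{2.3} at $t=0$ together with the interpolation and quadrature estimates \eqref{2.5}--\eqref{2.8}. Concretely, I would set $e_h := I_h y(0,\cdot) - y_h(0,\cdot) \in X_h^n$, since by \eqref{2.5} it suffices to bound $\|e_h\|_{L^2}$ (the gap between $y(0,\cdot)$ and $I_h y(0,\cdot)$ is already $O(h^2)$ because $x_0$, hence $y(0,\cdot)$, is smooth). The natural test function is $\psi_h = e_h$.

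The key steps: First, write down \eqref{2.3} at $t=0$ with $\psi = e_h$ (legitimate since $e_h \in X_h^n \subset H^1_{per}$), and subtract \eqref{2.12} with $\psi_h = e_h$. This yields an identity of the form
\begin{align*}
\int_0^{2\pi} I_h\big( y_h(0,\cdot)\cdot e_h\big) |(I_h x_0)_u|
&= \int_0^{2\pi} (y(0,\cdot)\cdot e_h)\, |x_{0u}| \\
&\quad + \int_0^{2\pi} \Big( \tau(0,\cdot) - \tfrac{(I_h x_0)_u}{|(I_h x_0)_u|}\Big)\cdot e_{hu}.
\end{align*}
Second, rewrite the left side by inserting $I_h y(0,\cdot)$, so that the leading term becomes $\int I_h(|e_h|^2)|(I_h x_0)_u|$, which by \eqref{2.6} and \eqref{3.1} controls $c\,\|e_h\|_{L^2}^2$ from below (one must check that $|(I_h x_0)_u|$ is bounded above and below for $h$ small, which follows from $|x_{0u}|\ge c_0$, \eqref{2.5}, and the inverse estimate — this is where $h\le h_0$ enters). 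The remaining terms are "consistency errors": the mismatch between $|(I_h x_0)_u|$ and $|x_{0u}|$, between $I_h(y\cdot e_h)$ and $y\cdot e_h$ (handled via \eqref{2.7}), and the tangent mismatch $\tau(0) - (I_h x_0)_u/|(I_h x_0)_u|$. Standard interpolation estimates bound each of these by $Ch$ times $\|e_h\|_{H^1}$ or $\|e_h\|_{L^2}$; the tangent term, being the most singular, contributes $Ch\|e_{hu}\|_{L^2} \le Ch\cdot h^{-1}\|e_h\|_{L^2} = C\|e_h\|_{L^2}$ after the inverse estimate \eqref{invest} — which at first glance is not small. To close the argument one instead keeps $Ch\|e_{hu}\|_{L^2}$ and absorbs it: one needs a separate bound on $\|e_{hu}\|_{L^2}$, obtained either by testing with a suitable function or, more cleanly, by noting $\|y_{hu}(0,\cdot)\|_{L^2}$ is controlled via \eqref{2.12}, \eqref{nostra}, and the discrete equation, giving $\|e_{hu}\|_{L^2}\le C$; then $Ch\|e_{hu}\|_{L^2}\le Ch$ directly. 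Combining, $c\|e_h\|_{L^2}^2 \le Ch\|e_h\|_{L^2} + Ch\|e_h\|_{L^2}^{?}$, and Young's inequality gives $\|e_h\|_{L^2}\le Ch$; the triangle inequality with \eqref{2.5} finishes.

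The main obstacle I anticipate is precisely the tangent-mismatch term $\int(\tau(0) - \tau_h(0))\cdot e_{hu}$: the naive bound loses the factor $h$ via the inverse estimate, so one must either establish an a priori $O(1)$ bound on $\|y_{hu}(0,\cdot)\|_{L^2}$ (hence on $\|e_{hu}\|_{L^2}$) before closing the estimate, or argue more carefully that $\tau(0)-\tau_h(0)$ is $O(h)$ in a norm strong enough to pair with $e_{hu}$ without the inverse estimate — e.g. using that $(I_h x_0)_u$ is a piecewise-constant $O(h)$-approximation of $x_{0u}$ in $L^\infty$ (via smoothness of $x_0$), so that $\|\tau(0)-\tau_h(0)\|_{L^\infty}\le Ch$ and the term is $\le Ch\|e_{hu}\|_{L^1}\le Ch\|e_{hu}\|_{L^2}$, still requiring the $\|e_{hu}\|_{L^2}$ bound. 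Either route should work, and since there is no $\lambda$-term in \eqref{2.3}, the argument is in fact identical to the corresponding lemma in \cite{DD09}; I would simply cite that this step is unchanged by the modification of the flow.
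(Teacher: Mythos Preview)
Your overall approach matches the paper's: both test \eqref{2.12} with $\psi_h = I_h y(0,\cdot) - y_h(0,\cdot)$, use \eqref{2.7} and interpolation estimates, and ultimately defer to \cite[Lemma~2.2]{DD09} since nothing in this step involves the $\lambda$-term. In that sense the proposal is correct.

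However, the resolution you sketch for the tangent-mismatch term $\int(\tau_h(0)-\tau)\cdot e_{hu}$ has a gap. Your route via an a priori bound $\|y_{hu}(0,\cdot)\|_{L^2}\le C$ is circular: the only equation available for $y_h(0,\cdot)$ is \eqref{2.12} itself, and testing it gives at best $\|y_h(0,\cdot)\|_{L^2}\le C$, from which the inverse estimate only yields $\|y_{hu}(0,\cdot)\|_{L^2}\le Ch^{-1}$. Your $L^\infty$ route likewise stalls at $Ch\|e_{hu}\|_{L^2}$. The clean fix, which is what underlies the argument in \cite{DD09}, is an orthogonality trick: since $e_{hu}\in Z_h$, one has
\[
\int(\tau_h(0)-\tau)\cdot e_{hu}=\int(\tau_h(0)-Q_h\tau)\cdot e_{hu},
\]
and because $(I_hx_0)_u=Q_hx_{0u}$, both $\tau_h(0)$ and $Q_h\tau$ are midpoint-type approximations of $\tau$, so $\|\tau_h(0)-Q_h\tau\|_{L^\infty}\le Ch^2$. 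Then the inverse estimate gives $Ch^2\|e_{hu}\|_{L^2}\le Ch\|e_h\|_{L^2}$, which closes the argument without any a priori control on $e_{hu}$.
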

\begin{proof}
See \cite[Lemma 2.2]{DD09}. The claim follows from \eqref{2.12} with $\psi_{h}=I_{h} y(0, \cdot) - y_{h}(0, \cdot)$ and using \eqref{2.7}, $y(0, \cdot)= \frac{1}{|x_{0u}|}\left( \frac{x_{0u}}{|x_{0u}|} \right)_{u}$ and appropriate interpolation estimates. 
\end{proof}

The main error estimates are summarized in the following theorem.
\begin{teo}\label{mainteo}
Let Assumption~\ref{assum} holds.
There exists $h_{0}>0$ such that \eqref{2.9}-\eqref{2.11} has a unique solution and 
\begin{align}\label{3.13}
&\sup_{t \in [0,T]} \| x(t, \cdot) - x_{h} (t, \cdot) \|^{2}_{H^{1}} + \int_{0}^{T} \| x_{t} (t, \cdot) - x_{ht} (t, \cdot) \|^{2}_{L^{2}} dt \leq Ch^{2},\\ \label{3.14}
&\sup_{t \in [0,T]} \| y(t, \cdot) - y_{h} (t, \cdot) \|^{2}_{L^{2}} + \int_{0}^{T} \| y_{u} (t, \cdot) - y_{hu} (t, \cdot) \|^{2}_{L^{2}} dt \leq Ch^{2}
\end{align}
for all $0 < h \leq h_{0}$. The constant $C$ depends on $T$, $c_{0}$, $C_{0}$, $\lambda$, and on higher norms of the solution $x$ of the continuous problem. 
\end{teo}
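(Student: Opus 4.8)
The plan is to follow the strategy of \cite[Thm.~2.3]{DD09}: produce error identities by testing the continuous weak form \eqref{2.2}--\eqref{2.3} against discrete test functions and subtracting \eqref{2.9}--\eqref{2.10}, run an energy estimate that mirrors Remark~\ref{rem:energy} with error terms on the right-hand side, and close with Gronwall's inequality together with a continuation argument in time. For fixed $h$, system \eqref{2.9}--\eqref{2.11} written in the nodal basis is a system of ODEs that has a unique local-in-time solution as long as $|x_{hu}|>0$; smoothness of $x_{0}$ and \eqref{2.11} guarantee this near $t=0$, while global existence on $[0,T]$ will follow a posteriori from the error bounds, which keep $|x_{hu}|$ between positive constants. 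The initial curvature $y_{h}(0,\cdot)$ is well defined by \eqref{2.12} and controlled by Lemma~\ref{lem2.2}.

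For the error analysis I would split $x-x_{h}=(x-I_{h}x)+(I_{h}x-x_{h})=:\eta_{x}+\theta_{x}$ and $y-y_{h}=(y-I_{h}y)+(I_{h}y-y_{h})=:\eta_{y}+\theta_{y}$, with $\theta_{x}(t,\cdot),\theta_{y}(t,\cdot)\in X_{h}^{n}$, and control $\eta_{x},\eta_{y}$ through \eqref{2.5}. Subtracting \eqref{2.9} from \eqref{2.2} with $\phi=\phi_{h}$ and \eqref{2.10} from \eqref{2.3} with $\psi=\psi_{h}$ produces two error identities in which: (i) the leading terms $\int_{0}^{2\pi}\frac{P_{h}(y-y_{h})_{u}\cdot\phi_{hu}}{|x_{hu}|}$ and $\int_{0}^{2\pi}(\tau-\tau_{h})\cdot\psi_{hu}$ carry the fourth-order structure; (ii) the term $\lambda\int_{0}^{2\pi}(x-x_{h})_{u}\cdot\phi_{hu}$ appears \emph{exactly}, with no consistency error --- this is precisely where replacing the length functional by the Dirichlet energy pays off, since $\mathcal{D}$ discretizes linearly; and (iii) all remaining contributions are consistency errors, namely the mass-lumping defects $\int_{0}^{2\pi}(\mathrm{id}-I_{h})(\,\cdot\,)$ handled via \eqref{2.7}, and the geometric defects coming from $|x_{u}|$ versus $|x_{hu}|$, $\tau$ versus $\tau_{h}$, $P$ versus $P_{h}$, and $|y|^{2}y$ versus $I_{h}(|y_{h}|^{2})\tau_{h}$, all of which reduce to $(x-x_{h})_{u}$ or $(y-y_{h})$ times bounded factors, using \eqref{2.5}, \eqref{2.8}, the mesh regularity \eqref{extra-ass} (for those terms requiring second-order consistency), and the a priori $L^{\infty}$-bounds of the continuation argument below.

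The core is the energy estimate obtained by choosing test functions exactly as in Remark~\ref{rem:energy}: $\phi_{h}=\theta_{xt}\;(=I_{h}x_{t}-x_{ht})$ in the $x$-error identity and $\psi_{h}=\theta_{y}$ in the time-differentiated $y$-error identity. Adding the two, the leading terms reproduce the structure
\[
\int_{0}^{2\pi} I_{h}(|\theta_{xt}|^{2})\,|x_{hu}| \;+\; \frac{d}{dt}\Big\{\tfrac{1}{2}\int_{0}^{2\pi} I_{h}(|\theta_{y}|^{2})\,|x_{hu}| \;+\; \tfrac{\lambda}{2}\int_{0}^{2\pi}|\theta_{xu}|^{2}\Big\}
\]
up to terms that are either consistency errors or absorbable. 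With the a priori $L^{\infty}$-controls on $x_{hu}$, $\tau_{h}$ and $y_{h}$ provided by the continuation assumption below (so that, e.g., $c_{0}/2\le|x_{hu}|\le 2C_{0}$ and $\|y_{h}\|_{L^{\infty}}\le\|y\|_{L^{\infty}}+1$) together with \eqref{invest}, the right-hand side is bounded by $\eps$ times the good terms plus $C_{\eps}(\|\theta_{x}\|_{H^{1}}^{2}+\|\theta_{y}\|_{L^{2}}^{2}+h^{2})$; absorbing, integrating in time, using Lemma~\ref{lem2.2} for the data, and applying Gronwall's inequality gives
\[
\sup_{[0,T]}\big(\|\theta_{x}\|_{H^{1}}^{2}+\|\theta_{y}\|_{L^{2}}^{2}\big)+\int_{0}^{T}\big(\|\theta_{xt}\|_{L^{2}}^{2}+\|\theta_{yu}\|_{L^{2}}^{2}\big)\,dt\le Ch^{2},
\]
which with \eqref{2.5} yields \eqref{3.13}--\eqref{3.14}. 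For the continuation argument, the whole estimate is carried out under the a priori assumption that $\|x_{u}-x_{hu}\|_{L^{\infty}}+\|y-y_{h}\|_{L^{\infty}}$ is $\le$ a fixed small constant on $[0,t]$, under which the constant $C$ above is independent of $t$; then \eqref{2.5} and the $1$D inverse estimate $\|\theta_{xu}\|_{L^{\infty}}\le Ch^{-1/2}\|\theta_{xu}\|_{L^{2}}$ (and similarly for $\theta_{y}$) force this quantity to be $\le Ch^{1/2}$ on $[0,t]$, so choosing $h_{0}$ with $Ch_{0}^{1/2}$ below the threshold and using continuity in $t$ propagates the assumption to all of $[0,T]$.

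The main obstacle is item (i) together with the coupling of the two error identities through the fourth-order term: there is no a priori control on $\|y_{hu}\|_{L^{2}}$, so $\int_{0}^{2\pi}\frac{P_{h}(y-y_{h})_{u}\cdot\theta_{xut}}{|x_{hu}|}$ cannot be estimated naively. It must be matched against the time derivative of the $y$-error identity tested with $\theta_{y}$, exactly as in Remark~\ref{rem:energy}, so that $(y-y_{h})_{u}$ pairs with $\theta_{yu}$ to form the $\tfrac{d}{dt}$ of the curvature energy plus lower-order terms; the projection $P_{h}$ (versus $P$) moreover produces an asymmetric contribution of the form $\int_{0}^{2\pi}\frac{\big((\tau_{h}\otimes\tau_{h})(y-y_{h})_{u}\big)\cdot\theta_{xut}}{|x_{hu}|}$ that has to be absorbed using $|\theta_{xt}|\sim|x_{ht}|$ and the $L^{\infty}$-bounds. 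Keeping every consistency error at the right order --- nothing worse than $O(h)$ in $L^{2}$ --- and in particular treating the mass-lumping terms with \eqref{2.7} and the geometric defects with \eqref{extra-ass} and the bootstrap bounds is the remaining, bookkeeping-heavy part. Uniqueness of the semi-discrete solution follows from the same energy estimate applied to the difference of two solutions, for which the data and all consistency errors vanish.
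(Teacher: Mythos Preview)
Your overall strategy matches the paper's (test with $\phi_{h}=\theta_{xt}$ and $\psi_{h}=\theta_{y}$, mimic the energy identity, close by continuation and Gronwall), but there are two genuine gaps in the execution. First, the energy structure you write is too clean. After matching the cross terms from the two error identities, the leftovers are \emph{not} merely ``consistency errors or absorbable'': you are left with contributions of the form $\int(\frac{1}{|x_{u}|}Px_{tu}-\frac{1}{|x_{hu}|}P_{h}x_{htu})\cdot(y_{u}-y_{hu})-\int(\frac{1}{|x_{u}|}Py_{u}-\frac{1}{|x_{hu}|}P_{h}y_{hu})\cdot(x_{tu}-x_{htu})$ and analogous $|y|^{2}\tau$ terms, which carry $x_{htu}$ paired against first-order errors and cannot be absorbed (there is no control on $\|x_{htu}\|_{L^{2}}$). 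In the paper these are rewritten algebraically as $\tfrac{d}{dt}$ of explicit quadratic error functionals plus genuinely lower-order remainders; the differentiated quantity $\zeta(t)$ is therefore not your clean energy but contains extra pieces such as $-\tfrac{1}{4}\int|y|^{2}|\tau_{h}-\tau|^{2}|x_{hu}|$ and $\int(\frac{|x_{hu}|}{|x_{u}|}-1)(\tau_{h}-\tau)\cdot y_{u}$, and a separate lower bound for $\zeta$ (the paper's Lemma~\ref{lem3.4}) is needed to recover coercivity. Your proposed fix ``absorbed using $|\theta_{xt}|\sim|x_{ht}|$ and the $L^{\infty}$-bounds'' does not address this: $|\theta_{xt}|\sim|x_{ht}|$ is not true, and the bad terms involve $\theta_{xtu}$, not $\theta_{xt}$.

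Second, your claimed right-hand side $C_{\eps}(\|\theta_{x}\|_{H^{1}}^{2}+\|\theta_{y}\|_{L^{2}}^{2}+h^{2})$ cannot hold: several residuals (e.g.\ the mass-lumping defect $\sum_{j}\tfrac{h_{j}^{2}}{6}\int|y_{hu}|^{2}\tau_{h}\cdot\theta_{xtu}$, and the $(\tfrac{1}{|x_{u}|}-\tfrac{1}{|x_{hu}|})P(y_{u}-y_{hu})$ remainder above) unavoidably leave $\|y_{u}-y_{hu}\|_{L^{2}}^{2}$ on the right. Your energy on the left controls only $\|\theta_{y}\|_{L^{2}}^{2}$, not $\|\theta_{yu}\|_{L^{2}}^{2}$, so you can neither absorb nor Gronwall, and the bound on $\int_{0}^{T}\|\theta_{yu}\|_{L^{2}}^{2}$ in your final display is unexplained. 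The paper closes this loop with a separate estimate (Lemma~\ref{lem3.5}): testing the \emph{undifferentiated} difference of \eqref{2.2} and \eqref{2.9} with $\phi_{h}=\theta_{y}$ and combining with a tangential identity yields $\|y_{u}-y_{hu}\|_{L^{2}}^{2}\le\eps\|x_{t}-x_{ht}\|_{L^{2}}^{2}+C_{\eps}(h^{2}+\|x-x_{h}\|_{H^{1}}^{2}+\|y-y_{h}\|_{L^{2}}^{2})$, which both closes the Gronwall and, a posteriori, delivers the second half of \eqref{3.14}. Without this ingredient your argument does not close.
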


The proof follows exactly the same strategy of the analogous problem studied in \cite[Thm 2.3]{DD09} in that we exploit the variation structure of the problem as pointed out in Remark~\ref{rem:energy}. 
However, by our choice of energy functional (that is $\mathcal{D}_{\lambda}$ over $\mathcal{E}_{\lambda}$) the main difficulty posed by the treatment of the length element (see \cite[pp 657-661]{DD09}) falls completely away, and this simplifies the analysis in a significant manner.
In fact, the simplified analysis together with the good grid properties of the discrete flow constitute the main advantages of considering \eqref{1.2} over \eqref{flowDKS}.

\section{Error analysis}
\label{sec3}

Recall that Assumption~\ref{assum} holds throughout this section.
\subsection{Preliminary results}

We start by deriving a series of estimates that will enable us to prove Theorem~\ref{mainteo}.
To that end let us assume that for $h>0$ there exists a unique solution of \eqref{2.9}-\eqref{2.11} on $[0, \bar{t}] \times [0, 2\pi] $
for some $\bar{t} \in (0,T]$. The first lemma is the analogous of (Step 1) in the derivation of the energy estimate (cf.~Remark~\ref{rem:energy}). 

\begin{lemma}\label{lem3.1} Assume the uniform bounds
\begin{align}\label{3.3a}
 \frac{c_{0}}{2} \leq |x_{hu}| \leq 2C_{0} 
\end{align}
and
\begin{align}\label{3.3b}
 |y_{h}| \leq 2 C_{0}
\end{align}
hold on $[0, \bar{t}]$.
Then, for any $t \in (0, \bar{t})$, we have
\begin{align*}
\frac{c_{0}}{4}\|x_{t}-x_{ht}\|^{2}_{L^{2}} &+\lambda \frac{d}{dt} \left(\frac{1}{2}\int_{0}^{2\pi}|x_{u}-x_{hu}|^{2} \right) \\
& \quad - \int_{0}^{2\pi} \left(\frac{1}{|x_{u}|}Py_{u} - \frac{1}{|x_{hu}|}P_{h}y_{hu}\right) \cdot ( x_{tu} -x_{htu})\\
& \quad -\frac{1}{2} \int_{0}^{2\pi} (|y|^{2}\tau -|y_{h}|^{2} \tau_{h}) \cdot( x_{tu} -x_{htu})
\\
& \leq C h^{2} + C \| x_{u} - x_{hu} \|_{L^{2}}^{2} + C \| y - y_{h} \|_{H^{1}}^{2}. 
\end{align*}
\end{lemma}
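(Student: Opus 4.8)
The plan is to reproduce, at the level of the error, the computation called (Step~1) in Remark~\ref{rem:energy}, comparing the continuous system \eqref{2.2}--\eqref{2.3} with the discrete one \eqref{2.9}--\eqref{2.10}. Concretely, I would test \eqref{2.2} with $\phi = x_t - x_{ht}$ (admissible since $x_h(t,\cdot)\in X_h^n\subset H^1_{per}(0,2\pi)$) and \eqref{2.9} with its natural finite element counterpart $\phi_h = I_h x_t - x_{ht}\in X_h^n$, and subtract. Since $\phi-\phi_h = x_t - I_h x_t$ is a pure interpolation error, one also picks up the contribution ``\eqref{2.2} tested against $x_t - I_h x_t$''; integrating by parts on each $I_j$ and using that $x_t - I_h x_t$ vanishes at the nodes, together with Assumption~\ref{assum} and \eqref{2.5}, this contribution is $O(h^2)$.

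I would then sort the resulting identity into the four quantities displayed on the left of the claim plus a remainder. For the mass term, writing $x_t = (x_t - x_{ht}) + x_{ht}$, the continuous part contributes $\int_0^{2\pi}|x_t-x_{ht}|^2|x_u|$, which together with \eqref{3.1} (or \eqref{3.3a}, \eqref{2.6}) is bounded below by a multiple of $\|x_t-x_{ht}\|_{L^2}^2$ up to remainder terms. For the $\lambda$-term one has $\lambda\int_0^{2\pi} x_u\cdot(x_{tu}-x_{htu}) - \lambda\int_0^{2\pi} x_{hu}\cdot\phi_{hu} = \lambda\int_0^{2\pi}(x_u-x_{hu})\cdot(x_{tu}-x_{htu}) = \lambda\frac{d}{dt}\big(\tfrac12\int_0^{2\pi}|x_u-x_{hu}|^2\big)$, because the defect $\lambda\int_0^{2\pi} x_{hu}\cdot(x_t - I_h x_t)_u$ vanishes \emph{identically}: $x_{hu}$ is element-wise constant and $x_t - I_h x_t$ vanishes at the nodes. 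This exact cancellation -- unavailable for the nonlinear length functional -- is exactly the point where the analysis is simpler than in \cite{DD09}. Finally, in the two nonlinear terms the continuous contribution already carries $x_{tu}-x_{htu}$ since $\phi_u = x_{tu}-x_{htu}$; replacing $\phi_{hu}$ by $x_{tu}-x_{htu}$ (defect $(x_t - I_h x_t)_u$) and $I_h(|y_h|^2)$ by $|y_h|^2$ in the discrete contribution produces precisely the two coupling integrals kept on the left of the claim, plus further remainder terms.

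It remains to absorb the whole remainder into $Ch^2 + C\|x_u - x_{hu}\|_{L^2}^2 + C\|y - y_h\|_{H^1}^2 + \varepsilon\|x_t - x_{ht}\|_{L^2}^2$; choosing $\varepsilon$ small enough then produces the constant $\tfrac{c_0}{4}$ and proves the claim. The remainder terms are of the usual types: mass-lumping defects, handled by \eqref{2.6}--\eqref{2.7}; swaps of the weights $|x_u|\leftrightarrow|x_{hu}|$ and $\tau\leftrightarrow\tau_h$, and of $I_h(|y_h|^2)\leftrightarrow|y_h|^2$, handled by $\big||x_u|-|x_{hu}|\big|\leq|x_u-x_{hu}|$, \eqref{2.5}, \eqref{2.8}, \eqref{3.3a}--\eqref{3.3b} and element-wise Cauchy--Schwarz over the partition; and interpolation defects of the form $\int_0^{2\pi}(\text{element-wise smooth or piecewise-linear data})\cdot(x_t-I_h x_t)_u$, handled by element-wise integration by parts. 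Throughout one uses the smoothness of the exact solution with the bounds of Assumption~\ref{assum}, the inverse estimates \eqref{invest}, the mesh regularity, and the bounds $\|\phi_{hu}\|_{L^2}\leq Ch + \|x_{tu}-x_{htu}\|_{L^2}$ and $\|x_{htu}\|_{L^2}\leq Ch^{-1}\|x_t-x_{ht}\|_{L^2}+C$ (the latter obtained by writing $x_{ht}=I_hx_t+(x_{ht}-I_hx_t)$ and applying \eqref{2.5}, \eqref{nostra} and an inverse estimate).

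I expect the main obstacle to be exactly this last bookkeeping. The quantities $\|x_{tu}-x_{htu}\|_{L^2}$ and $\|y_{hu}\|_{L^2}$ are not controlled by the left-hand side of the claim, so every remainder term in which they occur must be organised so that they appear either inside the two coupling integrals kept on the left (to be processed together with the curvature equations \eqref{2.3}, \eqref{2.10} in the next step, mirroring \eqref{enestimate}), or only against a sufficiently high power of $h$ -- harvested by element-wise Cauchy--Schwarz -- so that they can be absorbed using $\|y_{hu}\|_{L^2}\leq C + \|y_u-y_{hu}\|_{L^2}$ and the inverse-estimate bound for $x_{htu}$. Ensuring that no such term survives on the right, which parallels \cite[pp.~657--661]{DD09} but without the subterms stemming from the first variation of the length functional, is the only delicate point.
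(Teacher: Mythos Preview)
Your proposal is correct and follows essentially the same approach as the paper. The paper's only organizational difference is that it inserts the \emph{same} discrete test function $\phi_h = I_h x_t - x_{ht}$ into both \eqref{2.2} and \eqref{2.9} (rather than $\phi = x_t - x_{ht}$ in \eqref{2.2} as you do), so the two coupling integrals first appear with $(I_h x_t - x_{ht})_u$ and are corrected to $x_{tu} - x_{htu}$ only at the end via the identity $\int_0^{2\pi} (I_h x_t)_u \cdot z_h = \int_0^{2\pi} x_{tu} \cdot z_h$ for $z_h \in Z_h$; the resulting remainder terms $S_1,\dots,S_5$ match yours up to this reshuffling (in particular your ``defect $\lambda\int x_{hu}\cdot(x_t-I_hx_t)_u$ vanishes identically'' is precisely this identity applied to the $\lambda$-term, whereas the paper carries it as the easily estimated term $S_4$). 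For the mass-lumping defects -- the obstacle you correctly flag -- the paper writes $x_{htu} = (I_h x_t)_u - (I_h x_t - x_{ht})_u$ in $S_3$ so as to produce a term $-\sum_j \tfrac{h_j^2}{6}\int_{I_j}|(I_h x_t - x_{ht})_u|^2|x_{hu}|$ with a \emph{good sign}, and then uses this term to absorb the $\delta\,\sum_j h_j^2\int_{I_j}|(I_h x_t - x_{ht})_u|^2$ contributions arising from Young's inequality applied to the $|y_{hu}|^2$ defect $S_5$; your inverse-estimate route (pulling $h^2\|(I_h x_t - x_{ht})_u\|_{L^2}^2 \leq C\|x_t-x_{ht}\|_{L^2}^2 + Ch^2$) also closes but is slightly less direct.
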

\begin{proof}
Inserting $\phi_{h} = I_{h} x_{t} - x_{ht}$ in \eqref{2.2} and \eqref{2.9}, taking the difference and using \eqref{2.7}
we obtain
\begin{align*}
&\int_{0}^{2\pi}|x_{t}-x_{ht}|^{2} |x_{hu}| +\lambda \frac{d}{dt} \left(\frac{1}{2}\int_{0}^{2\pi}|x_{u}-x_{hu}|^{2} \right)\\
& \quad - \int_{0}^{2\pi} \left(\frac{1}{|x_{u}|}Py_{u} - \frac{1}{|x_{hu}|}P_{h}y_{hu}\right) \cdot (I_{h} x_{t} -x_{ht})_{u}\\
& \quad -\frac{1}{2} \int_{0}^{2\pi} (|y|^{2}\tau -|y_{h}|^{2} \tau_{h}) \cdot(I_{h} x_{t} -x_{ht})_{u}
\displaybreak[0] \\
&=\int_{0}^{2\pi} x_{t} \cdot (I_{h} x_{t}- x_{ht}) (|x_{hu}|-|x_{u}|) +
\int_{0}^{2\pi} (x_{t}-x_{ht}) \cdot (x_{t} - I_{h} x_{t}) |x_{hu}| \\
&\quad + \sum_{j=1}^{N}\frac{h_{j}^{2}}{6} \int_{I_{j}} x_{htu} \cdot (I_{h}x_{t} -x_{ht})_{u} |x_{hu}|
 + \lambda \int_{0}^{2\pi} (x_{u}-x_{hu}) \cdot (x_{t}-I_{h} x_{t})_{u}\\
 & \quad - \sum_{j=1}^{N}\frac{h_{j}^{2}}{6} \int_{I_{j}}\frac{1}{2}|y_{hu}|^{2} \tau_{h}\cdot (I_{h} x_{t} -x_{ht})_{u}
 \quad = \sum_{i=1}^{5}S_{i}.
\end{align*}
Using \eqref{3.3a}, \eqref{2.5}, Young's inequality, and the smoothness of $x$ yields 
\begin{align*}
S_{1}+S_{2}+S_{4} \leq Ch^{2}+ \frac{1}{2}\int_{0}^{2\pi}|x_{t}-x_{ht}|^{2}|x_{hu}| + C\int_{0}^{2\pi}|x_{u}-x_{hu}|^{2}.
\end{align*}
The term $S_{3}+S_{5}$ is estimated as in \cite{DD09} using Young inequality several times with some $\delta > 0$ small enough, \eqref{3.3a}, \eqref{nostra}, \eqref{invest} \eqref{3.3b}
and smoothness assumptions on $x$, 
namely 
\begin{align*}
S_{3}+S_{5} & = \sum_{j=1}^{N}\frac{h_{j}^{2}}{6}\int_{I_{j}} \{ -|(I_{h}x_{t} -x_{ht})_{u}|^{2} + (I_{h}x_{t})_{u} \cdot (I_{h}x_{t} -x_{ht})_{u} \}|x_{hu}| \\
& \quad - \sum_{j=1}^{N}\frac{h_{j}^{2}}{6} \int_{I_{j}}\frac{1}{2} |y_{hu}| \, |(y_{hu} - y_{u}) + y_{u}| \, \tau_{h}\cdot (I_{h} x_{t} -x_{ht})_{u}
\displaybreak[0] \\
& \leq -\sum_{j=1}^{N}\frac{h_{j}^{2}}{6}\int_{I_{j}} \frac{1}{2}|(I_{h}x_{t} -x_{ht})_{u}|^{2} \}|x_{hu}| + \sum_{j=1}^{N} \frac{h_{j}^{2}}{6} \int_{I_{j}} \frac{1}{2} |(I_{h}x_{t})_{u}|^{2} |x_{hu}|\\
& \quad + \sum_{j=1}^{N}\frac{h_{j}^{2}}{6} \int_{I_{j}} \frac{1}{2} |y_{hu}| \, |y_{hu} - y_{u}| \, |(I_{h} x_{t} -x_{ht})_{u}| \\
& \quad + \sum_{j=1}^{N}\frac{h_{j}^{2}}{6} \int_{I_{j}} \frac{1}{2} |y_{hu} - y_{u}| \, |y_{u}| \, |(I_{h} x_{t} -x_{ht})_{u}| \\
& \quad + \sum_{j=1}^{N}\frac{h_{j}^{2}}{6} \int_{I_{j}} \frac{1}{2} |y_{u}|^{2} \, |(I_{h} x_{t} -x_{ht})_{u}| \displaybreak[0] \\
& \leq -\sum_{j=1}^{N}\frac{h_{j}^{2}}{6}\int_{I_{j}} \frac{1}{2}|(I_{h}x_{t} -x_{ht})_{u}|^{2} |x_{hu}| + C h^{2} \| x_{tu}\|_{L^{2}}^{2} \\
& \quad + C_\delta \sum_{j=1}^{N}\frac{h_{j}^{2}}{6} (\| y_{hu} \|_{L^{\infty}}^{2} + \| y_{u} \|_{L^{\infty}}^{2})\int_{I_{j}}|y_{hu} - y_{u}|^{2} 
 + C_\delta h^{2} \| y_{u}\|_{L^{2}}^{2} \| y_{u}\|_{L^{\infty}}^{2} \\ 
& \quad + \delta \sum_{j=1}^{N} \frac{h_{j}^{2}}{6} \int_{I_{j}} |(I_{h}x_{t} - x_{ht})_{u}|^{2} |x_{hu}| \\
& \leq C h^{2} + C (\|y_{h}\|_{L^{\infty}}^{2} +h^{2}) \| y_{hu} -y_{u}\|_{L^{2}}^{2} \leq Ch^{2} + C \| y_{hu} -y_{u}\|_{L^{2}}^{2}. 
\end{align*}

The next estimate is provided in \cite[Lemma~3.1]{DD09} to which we refer for details: it is derived with help of \eqref{2.5} and \eqref{2.8} and the fact that $\int_{0}^{2\pi} x_{tu} \cdot z_{h} = \int_{0}^{2 \pi} (I_{h}x_{t})_{u} \cdot z_{h} $ holds for any $z_{h} \in Z_{h}$. We have that 
\begin{align*}
\int_{0}^{2\pi} \left(\frac{1}{|x_{u}|}Py_{u} - \frac{1}{|x_{hu}|}P_{h}y_{hu}\right) \cdot (I_{h} x_{t})_{u}
\leq Ch^{2}+ \int_{0}^{2\pi} \left(\frac{1}{|x_{u}|}Py_{u} - \frac{1}{|x_{hu}|}P_{h}y_{hu}\right) \cdot x_{tu}.
\end{align*}
Finally, we can show with the help of \eqref{3.1} and \eqref{3.3b} that 
\begin{align*}
& \frac{1}{2} \int_{0}^{2\pi} (|y|^{2}\tau -|y_{h}|^{2} \tau_{h}) \cdot(I_{h} x_{t})_{u} \\
& = \frac{1}{2} \int_{0}^{2\pi} (|y|^{2}\tau -|y_{h}|^{2} \tau_{h}) \cdot ((I_{h} x_{t})_{u} -x_{tu}) + \frac{1}{2} \int_{0}^{2\pi} (|y|^{2}\tau -|y_{h}|^{2} \tau_{h}) \cdot x_{tu} \\
&\leq Ch^{2}+ C \| y-y_{h}\|_{L^{2}}^{2}+ C\|\tau-\tau_{h}\|_{L^{2}}^{2} + \int_{0}^{2\pi} (|y|^{2}\tau -|y_{h}|^{2} \tau_{h}) \cdot x_{tu}.
\end{align*}
Putting all estimates together, using $|x_{hu}| \geq c_{0}/2$ and \eqref{pipe} the claim follows.
\end{proof}

In the second lemma we emulate (Step 2) in the derivation of the energy estimate (cf.~Remark~\ref{rem:energy}).
\begin{lemma}\label{lem3.2}
Assume that \eqref{3.3a} and \eqref{3.3b} hold on $ [0, \bar{t}]$. Then for any $\epsilon >0$ and any $t \in (0, \bar{t})$ we have that 
\begin{align*}
\frac{1}{2} &\frac{d}{dt} \int_{0}^{2\pi} I_{h} (|I_{h}y -y_{h}|^{2}) |x_{hu}|
+\int_{0}^{2\pi} \left( \frac{1}{|x_{u}|} Px_{tu} -\frac{1}{|x_{hu}|} P_{h} x_{htu} \right) \cdot (y_{u}-y_{hu})\\
& -\frac{1}{2} \int_{0}^{2\pi} |y-y_{h}|^{2} x_{htu} \cdot \tau_{h} + \int_{0}^{2\pi} (( x_{tu} \cdot \tau) y -(x_{htu} \cdot \tau_{h}) y_{h}) \cdot (y-y_{h})\\
& \leq \epsilon \| x_{t} - x_{ht} \|_{L^{2}}^{2} + C_{\epsilon} (h^{2} + \| y-y_{h}\|_{H^{1}}^{2} + 
\| x_{u}-x_{hu} \|^{2}_{L^{2}}
).
\end{align*}
\end{lemma}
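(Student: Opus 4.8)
The plan is to emulate (Step~2) from Remark~\ref{rem:energy}: differentiate \eqref{2.10} in time and test with $\psi_h = I_h y - y_h$, do the same with the continuous identity \eqref{2.3} tested against the same (continuous) $\psi = I_h y - y_h$, and subtract. Concretely, I would first write $(\ref{2.3})_t$ with $\psi = I_h y - y_h$, namely
\begin{align*}
\int_0^{2\pi}(y_t\cdot\psi)|x_u| + \int_0^{2\pi}(y\cdot\psi)\,(|x_u|)_t + \int_0^{2\pi}\big(\tfrac{x_{tu}}{|x_u|} - (\tau\cdot x_{tu})\tfrac{x_u}{|x_u|^2}\big)\cdot\psi_u = 0,
\end{align*}
recognising $(|x_u|)_t = \tau\cdot x_{tu}$ and $\partial_t\tau = \tfrac{1}{|x_u|}Px_{tu}$, so that the last integral is $\int_0^{2\pi}\tfrac{1}{|x_u|}(Px_{tu})\cdot\psi_u$. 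Then I would write $(\ref{2.10})_t$ with $\psi_h = I_h y - y_h$, using $\partial_t I_h(y_h\cdot\psi_h)$ together with the identity \eqref{2.7} to pass between $I_h(\,\cdot\,)$ and the plain product, and $(|x_{hu}|)_t = \tau_h\cdot x_{htu}$, $\partial_t\tau_h = \tfrac{1}{|x_{hu}|}P_h x_{htu}$. Taking the difference, the leading term becomes $\tfrac12\tfrac{d}{dt}\int_0^{2\pi} I_h(|I_h y - y_h|^2)|x_{hu}|$ up to commutator terms, which is exactly the first term in the claim.

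The second step is to match up the three "geometric" groups that survive the subtraction. The $\psi_u$-pairing produces $\int_0^{2\pi}\big(\tfrac{1}{|x_u|}Px_{tu} - \tfrac{1}{|x_{hu}|}P_h x_{htu}\big)\cdot(I_h y - y_h)_u$; replacing $(I_h y)_u$ by $y_u$ costs a term controlled by $Ch^2$ via \eqref{2.5}, \eqref{2.8} and the $Z_h$-orthogonality trick $\int x_{tu}\cdot z_h = \int (I_h x_t)_u\cdot z_h$ used already in Lemma~\ref{lem3.1}, giving the second term of the claim. The terms coming from differentiating $(|x_u|)_t(y\cdot\psi)$ versus $(|x_{hu}|)_t I_h(y_h\cdot\psi_h)$ split, after adding and subtracting, into $-\tfrac12\int_0^{2\pi}|y - y_h|^2 x_{htu}\cdot\tau_h$ plus the "mixed" term $\int_0^{2\pi}\big((x_{tu}\cdot\tau)y - (x_{htu}\cdot\tau_h)y_h\big)\cdot(y - y_h)$, exactly as displayed; the arithmetic here is the standard "add and subtract the cross term" manipulation, using $|y|,|y_h|\le 2C_0$ and $c_0/2\le|x_{hu}|\le 2C_0$.

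The third step is to absorb all the error (commutator and interpolation) terms into the right-hand side. These arise from: (i) the difference $I_h(|I_h y - y_h|^2)$ vs.\ $|I_h y - y_h|^2$ and its time derivative, handled by \eqref{2.6}, \eqref{2.7} and the inverse estimate \eqref{invest}; (ii) replacing $x_{htu}$ by $x_{tu}$ and vice versa, which is where the factor $\|x_t - x_{ht}\|_{L^2}$ enters — this is the only place $x_{htu}$ cannot be bounded by the data, so an inverse estimate $\|x_{htu}\|_{L^2}\le Ch^{-1}\|x_{ht}\|_{L^2}$ must be combined with an $h$-weighted commutator factor, producing exactly the $\epsilon\|x_t - x_{ht}\|_{L^2}^2 + C_\epsilon(\cdots)$ structure after Young's inequality with parameter $\epsilon$; (iii) replacing $(I_h x_t)_u$ by $x_{tu}$, controlled by $Ch^2$ times higher norms of the smooth solution $x$. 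Collecting, one gets the stated inequality.

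The main obstacle is item (ii): the appearance of $x_{htu}$ in the geometric terms $-\tfrac12\int|y - y_h|^2 x_{htu}\cdot\tau_h$ and in $(x_{htu}\cdot\tau_h)y_h$. Since $x_{htu}$ is a discrete quantity with no a~priori $L^\infty$ or $H^1$ bound from Assumption~\ref{assum}, one cannot simply estimate it; the trick (as in \cite[Lemma~3.1]{DD09}) is to \emph{keep} these terms on the left-hand side of the inequality — they are precisely the terms that will cancel against the matching terms produced by Lemma~\ref{lem3.1} when the two lemmas are combined in the proof of Theorem~\ref{mainteo} — and only estimate the genuinely lower-order pieces. Getting the bookkeeping right so that exactly the four left-hand-side terms displayed in the statement remain, with everything else pushed into $\epsilon\|x_t - x_{ht}\|_{L^2}^2 + C_\epsilon(h^2 + \|y - y_h\|_{H^1}^2 + \|x_u - x_{hu}\|_{L^2}^2)$, is the delicate part; the cancellation of the length-element terms that complicates \cite[pp.~657--661]{DD09} does not occur here, which is the promised simplification.
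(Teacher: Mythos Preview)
Your approach is correct and essentially identical to the paper's: the proof there simply cites \cite[Lemma~3.2]{DD09} (differentiate \eqref{2.3} and \eqref{2.10} in time, subtract, test with $\psi_h = I_h y - y_h$, manipulate via \eqref{2.7}, and estimate using \eqref{2.5}, inverse estimates, \eqref{3.3a}, \eqref{3.3b}, and regularity), and then applies the pointwise bound \eqref{pipe} to replace $\|\tau-\tau_h\|_{L^2}^2 + \||x_u|-|x_{hu}|\|_{L^2}^2$ by $\|x_u-x_{hu}\|_{L^2}^2$. One correction: your final remark is misplaced --- Lemma~\ref{lem3.2} concerns only the curvature relation \eqref{2.3}/\eqref{2.10}, which contains no $\lambda$-term and is \emph{identical} to the setting of \cite{DD09}; the ``promised simplification'' from using $\mathcal{D}$ instead of $\mathcal{L}$ occurs in Lemma~\ref{lem3.1} and the subsequent combination (Lemma~\ref{lem3.3}), not here.
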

\begin{proof}
In \cite[Lemma~3.2]{DD09} it is shown that
\begin{align*}
\frac{1}{2} &\frac{d}{dt} \int_{0}^{2\pi} I_{h} (|I_{h}y -y_{h}|^{2}) |x_{hu}|
+\int_{0}^{2\pi} \left( \frac{1}{|x_{u}|} Px_{tu} -\frac{1}{|x_{hu}|} P_{h} x_{htu} \right) \cdot (y_{u}-y_{hu})\\
& -\frac{1}{2} \int_{0}^{2\pi} |y-y_{h}|^{2} x_{htu} \cdot \tau_{h} + \int_{0}^{2\pi} (( x_{tu} \cdot \tau) y -(x_{htu} \cdot \tau_{h}) y_{h}) \cdot (y-y_{h})\\
& \leq \epsilon \| x_{t} - x_{ht} \|_{L^{2}} + C_{\epsilon} (h^{2} + \| y-y_{h}\|_{H^{1}}^{2} + \| \tau -\tau_{h} \|^{2}_{L^{2}} +
\| |x_{u}| - |x_{hu}| \|^{2}_{L^{2}}
)
\end{align*}
holds for any $ \epsilon >0$. The above estimate is obtained as follows: equations \eqref{2.3} and \eqref{2.10} are differentiated with respect to time and subtracted. The estimate is achieved by choosing $\psi_{h}=I_{h} y - y_{h}$, manipulating the equation with help of \eqref{2.7}, and then estimating using \eqref{2.5}, inverse estimates, the assumptions \eqref{3.3a} and \eqref{3.3b} and the regularity of the smooth solution. 

On the other hand the uniform control from above and below of the lengths elements $|x_{u}|$ and $|x_{hu}|$ yields
\begin{align}\label{pipe}
|\tau- \tau_{h}| + \big| |x_{u}| - |x_{hu}| \big| \leq C |x_{u}-x_{hu}|
\end{align}
and the claim follows.
\end{proof}

Next we combine the previous two lemmas: unlike (Step 3) in the derivation of the energy estimate (cf.~Remark~\ref{rem:energy}), several terms involving the time derivative of the length elements and tangents do not cancel out.
These are collected in the term $\zeta$ below and must be taken care of (as explained in Lemma~\ref{lem3.4}) when we apply a Gronwall argument later on.

\begin{lemma}\label{lem3.3}
Assume \eqref{3.3a} and \eqref{3.3b} hold on $[0, \bar{t}]$. Then for ant $t \in (0, \bar{t})$ we have
\begin{align*}
\frac{c_{0}}{16} \|x_{t}-x_{ht}\|^{2}_{L^{2}} +\zeta'(t) & \leq Ch^{2}+
C(\| y-y_{h}\|_{H^{1}}^{2} + \| x_{u} -x_{hu}\|_{L^{2}}^{2}) 
\end{align*}
where 
\begin{align*}
\zeta(t)&=\frac{\lambda}{2} \| x_{u} -x_{hu}\|_{L^{2}}^{2} + \frac{1}{2} \int_{0}^{2\pi} I_{h} (|I_{h}y -y_{h}|^{2}) |x_{hu}|
-\frac{1}{4} \int_{0}^{2\pi }|y|^{2} |\tau_{h} -\tau|^{2} |x_{hu}|\\
&+ \int_{0}^{2\pi} \left( \frac{|x_{hu}| -|x_{u}|}{|x_{u}| } (\tau_{h} -\tau) 
+\frac{1}{2} \frac{|x_{hu}|}{|x_{u}|} |\tau-\tau_{h}|^{2} \tau \right) \cdot y_{u}.
\end{align*}
\end{lemma}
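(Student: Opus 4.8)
The plan is to prove the lemma by adding the estimates of Lemma~\ref{lem3.1} and Lemma~\ref{lem3.2}, imitating (Step~3) of the energy identity in Remark~\ref{rem:energy}, and then collecting into the primitive $\zeta(t)$ exactly those terms which —contrary to the exact energy identity— fail to cancel. Summing the two lemmas, the two genuine time derivatives $\frac{\lambda}{2}\frac{d}{dt}\|x_u-x_{hu}\|_{L^2}^2$ and $\frac12\frac{d}{dt}\int_0^{2\pi} I_h(|I_h y-y_h|^2)|x_{hu}|$ already reproduce the $t$-derivative of the first two summands of $\zeta$, so the task reduces to treating the five ``flux'' integrals still sitting on the left-hand sides: $-\int_0^{2\pi}\left(\frac{1}{|x_u|}Py_u-\frac{1}{|x_{hu}|}P_h y_{hu}\right)\cdot(x_{tu}-x_{htu})$ and $-\frac12\int_0^{2\pi}(|y|^2\tau-|y_h|^2\tau_h)\cdot(x_{tu}-x_{htu})$ from Lemma~\ref{lem3.1}, and $\int_0^{2\pi}\left(\frac{1}{|x_u|}Px_{tu}-\frac{1}{|x_{hu}|}P_h x_{htu}\right)\cdot(y_u-y_{hu})$, $-\frac12\int_0^{2\pi}|y-y_h|^2 x_{htu}\cdot\tau_h$ and $\int_0^{2\pi}\left((x_{tu}\cdot\tau)y-(x_{htu}\cdot\tau_h)y_h\right)\cdot(y-y_h)$ from Lemma~\ref{lem3.2}.

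For the reorganisation I would use the symmetry of $P=I-\tau\otimes\tau$ and $P_h=I-\tau_h\otimes\tau_h$ together with the elementary identities $\frac{1}{|x_u|}Px_{tu}=\tau_t$, $\frac{1}{|x_{hu}|}P_h x_{htu}=\tau_{ht}$, $\tau\cdot x_{tu}=(|x_u|)_t$ and $\tau_h\cdot x_{htu}=(|x_{hu}|)_t$, which hold because $\tau\cdot\tau_t=0=\tau_h\cdot\tau_{ht}$. Inserting these and cancelling the ``diagonal'' parts (where $Py_u$ is tested against $x_{tu}$, and $P_h y_{hu}$ against $x_{htu}$, and similarly for the $|y|^2\tau$ terms) leaves an expression in which every surviving discrete time derivative $\tau_{ht}$ or $(|x_{hu}|)_t$ multiplies a factor that carries an error, namely $\tau-\tau_h$, $|x_u|-|x_{hu}|$ or $y-y_h$. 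Applying the Leibniz rule ``backwards'' to these products rewrites them as $\frac{d}{dt}$ of precisely the last two summands of $\zeta$, i.e.\ of $-\frac14\int_0^{2\pi}|y|^2|\tau_h-\tau|^2|x_{hu}|$ and $\int_0^{2\pi}\left(\frac{|x_{hu}|-|x_u|}{|x_u|}(\tau_h-\tau)+\frac12\frac{|x_{hu}|}{|x_u|}|\tau-\tau_h|^2\tau\right)\cdot y_u$, up to a remainder.

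The remainder is then bounded termwise, in the spirit of the estimate of $S_3+S_5$ in the proof of Lemma~\ref{lem3.1} and of \cite{DD09}. Each term is of one of three kinds: (i) it carries an explicit $h_j^2$ coming from the mass-lumping identity \eqref{2.7} and, after \eqref{invest}, \eqref{nostra}, \eqref{3.3a}, \eqref{3.3b} and the smoothness of $x$, is bounded by $Ch^2$ plus a small multiple of $\sum_{j=1}^{N}\frac{h_j^2}{6}\int_{I_j}|(I_h x_t-x_{ht})_u|^2|x_{hu}|$, which is absorbed as in Lemma~\ref{lem3.1}; (ii) it is a bounded derivative of the smooth solution (Assumption~\ref{assum}) times a quadratic error quantity, hence bounded by $C(\|\tau-\tau_h\|_{L^2}^2+\| \, |x_u|-|x_{hu}| \, \|_{L^2}^2+\|y-y_h\|_{H^1}^2)\le C(h^2+\|x_u-x_{hu}\|_{L^2}^2+\|y-y_h\|_{H^1}^2)$ by \eqref{pipe}; or (iii) it contains the factor $x_t-x_{ht}$ and is handled by Young's inequality with a small parameter, yielding $\epsilon\|x_t-x_{ht}\|_{L^2}^2$ plus terms of type (i) or (ii). Putting everything together, the sum of the two lemmas gives $\left(\frac{c_0}{4}-C\epsilon\right)\|x_t-x_{ht}\|_{L^2}^2+\zeta'(t)\le Ch^2+C(\|y-y_h\|_{H^1}^2+\|x_u-x_{hu}\|_{L^2}^2)$, and choosing $\epsilon$ so small that $C\epsilon\le\frac{3c_0}{16}$ yields the claim.

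The step I expect to be the main obstacle is the second one: pinning down the primitive $\zeta$ of the non-cancelling flux terms and checking that what is left over really splits into the three controllable types above. This is precisely where, in \cite{DD09}, the length element forces a long and technical argument (cf.\ \cite[pp.~657--661]{DD09}); here the term $\lambda\mathcal{D}$ enters only through the harmless contributions $\frac{\lambda}{2}\frac{d}{dt}\|x_u-x_{hu}\|_{L^2}^2$ and (inside Lemma~\ref{lem3.1}) $\lambda\int_0^{2\pi}(x_u-x_{hu})\cdot(x_t-I_h x_t)_u$, so the reorganisation goes through essentially as in the length-free situation.
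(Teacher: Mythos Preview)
Your proposal is correct and follows essentially the same approach as the paper: add Lemma~\ref{lem3.1} and Lemma~\ref{lem3.2} (the paper takes $\epsilon=c_0/8$ from the outset), group the five flux integrals into the two blocks $A$ (the three curvature-modulus terms) and $B$ (the two projection terms), and use the identities $\tau_t=\tfrac{1}{|x_u|}Px_{tu}$, $\tau_{ht}=\tfrac{1}{|x_{hu}|}P_h x_{htu}$, $(|x_u|)_t=\tau\cdot x_{tu}$, $(|x_{hu}|)_t=\tau_h\cdot x_{htu}$ to peel off the time derivative of the last two summands of~$\zeta$. One clarification: in the paper's execution (which quotes the explicit algebraic identities for $A$ and $B$ from \cite[Lemma~3.3 and (3.13)]{DD09}, in particular $B=y_u\cdot\frac{d}{dt}\big(\tfrac{1}{|x_u|}Px_{hu}-\tau_h+\tau\big)+x_{tu}\cdot z_1+x_{tu}\cdot z_2$) the remainder consists \emph{only} of your type~(ii) terms, i.e.\ the smooth factor $x_{tu}$ (or $y\cdot y_t$, $y_{ut}$) times a quadratic error; no further mass-lumping via~\eqref{2.7} (your type~(i)) occurs at this stage, and no additional $\|x_t-x_{ht}\|_{L^2}^2$ (your type~(iii)) needs to be absorbed, so the constant $c_0/8$ survives unchanged (the $c_0/16$ in the statement is slack).
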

\begin{proof}
If follows from Lemma~\ref{lem3.1} and \ref{lem3.2} with $\epsilon =c_{0}/8$ that
\begin{multline*}
\frac{c_{0}}{8}\|x_{t}-x_{ht}\|^{2}_{L^{2}} + \lambda \frac{d}{dt} \left(\frac{1}{2}\int_{0}^{2\pi}|x_{u}-x_{hu}|^{2} \right) 
+ \frac{1}{2} \frac{d}{dt} \int_{0}^{2\pi} I_{h} (|I_{h}y -y_{h}|^{2}) |x_{hu}|
\\ + \int_{0}^{2\pi} (A + B)
 \leq C h^{2} + C\|x_{u}-x_{hu}\|^{2}_{L^{2}} + C \| y - y_{h} \|_{H^{1}}^{2}
\end{multline*}
where
\begin{align*}
A&= -\frac{1}{2}(|y|^{2}\tau -|y_{h}|^{2} \tau_{h}) \cdot( x_{tu} -x_{htu})
-\frac{1}{2}|y-y_{h}|^{2} (x_{htu}, \tau_{h}) \\
&\qquad + (( x_{tu} \cdot \tau) y -(x_{htu} \cdot \tau_{h}) y_{h}) \cdot (y-y_{h}),
\\
B &= 
\left( \frac{1}{|x_{u}|} Px_{tu} -\frac{1}{|x_{hu}|} P_{h} x_{htu} \right) \cdot (y_{u}-y_{hu})\\
&\qquad - \left(\frac{1}{|x_{u}|}Py_{u} - \frac{1}{|x_{hu}|}P_{h}y_{hu}\right) \cdot ( x_{tu} -x_{htu}).
\end{align*}
Notice that for instance $\frac{1}{|x_{u}|} Px_{tu} -\frac{1}{|x_{hu}|} P_{h} x_{htu} = (\tau- \tau_{h})_{t}$. Similar considerations for the other terms appearing in $A+B$ indicate that we can not estimate these terms yet.
In \cite[Lemma~3.3]{DD09} the authors show by a smart manipulation of the terms that $A$ can be written as follows:
\begin{align*}
&A=-\frac{1}{4}|y|^{2} \frac{d}{dt} \Big{(} |\tau_{h} -\tau|^{2} |x_{hu}| \Big{)}
+ \frac{1}{2} |y|^{2} \Big{(} 1 - \frac{|x_{hu}|}{|x_{u}|} \Big{)} (\tau_{h} -\tau) \cdot x_{tu} \\
& \quad - \frac{1}{4} |y|^{2} \frac{|x_{hu}|}{|x_{u}|} (\tau \cdot x_{tu}) |\tau_{h} - \tau|^{2} +\frac{1}{2} |y-y_{h}|^{2} (x_{tu} \cdot \tau_{h})
+ (y \cdot(y-y_{h}) ) (x_{tu} \cdot (\tau -\tau_{h})).
\end{align*}
Using \eqref{3.3a}, \eqref{3.1}, \eqref{pipe}, and the smoothness assumptions on $x$ and $y$ 
we obtain that
\begin{align*}
\int_{0}^{2\pi} A \,\, &\geq 
- C(\| y-y_{h}\|_{L^{2}}^{2} + \| x_{u} -x_{hu}\|_{L^{2}}^{2}) -\frac{1}{4} \frac{d}{d t} \int_{0}^{2\pi }|y|^{2} |\tau_{h} -\tau|^{2} |x_{hu}|.
\end{align*}
Next, it is shown in \cite[$(3.13)$]{DD09} that
\begin{align*}
B = y_{u} \cdot \frac{d}{dt} \left( \frac{1}{|x_{u}|} P x_{hu} -\tau_{h} + \tau \right) + x_{tu} \cdot z_{1} + x_{tu} \cdot z_{2}
\end{align*}
where
\begin{align*}
z_{1}=\frac{1}{|x_{hu}|} \left( \frac{|x_{hu}|}{|x_{u}|}-1\right)^{2 } Py_{u} + (\tau \cdot \tau_{h} -1) 
\frac{|x_{hu}|}{|x_{u}|^{2}} Py_{u} + \left( \frac{1}{|x_{u}|} -\frac{1}{|x_{hu}|} \right)P(y_{u} -y_{hu})
\end{align*}
and
\begin{align*}
z_{2}&=y_{u}\cdot(\tau_{h} -\tau) \left( \frac{|x_{hu}|}{|x_{u}|^{2} }\tau - \frac{1}{|x_{hu}|} \tau_{h} \right)
+ (y_{u} \cdot \tau) \left( \frac{|x_{hu}|}{|x_{u}|^{2} } -\frac{1}{|x_{hu}|} \right) (\tau_{h} -\tau )\\
& \quad + 2 \frac{|x_{hu}|}{|x_{u}|^{2} } (\tau \cdot y_{u}) (1 - \tau \cdot \tau_{h}) \tau +
\frac{1}{|x_{hu}|} \left( \, ( (y_{hu}-y_{u}) \cdot \tau) \tau - ((y_{hu}-y_{u}) \cdot \tau_{h}) \tau_{h} \, \right).
\end{align*}
Observing that
\begin{align*}
\frac{1}{|x_{u}|} P x_{hu} -\tau_{h} +\tau = \left( \frac{|x_{hu}|}{|x_{u}| } -1 \right) (\tau_{h} -\tau) 
+\frac{1}{2} \frac{|x_{hu}|}{|x_{u}|} |\tau-\tau_{h}|^{2} \tau
 \end{align*}
 as well as
 \begin{align*}
 (1-\tau\cdot \tau_{h}) =\frac{1}{2}|\tau-\tau_{h}|^{2} \qquad \text{ and } \qquad 
 \frac{|x_{hu}|}{|x_{u}|^{2} }-\frac{1}{|x_{hu}|}= \frac{ (|x_{hu}| - |x_{u}|) ( |x_{hu}| + |x_{u}|)}{ |x_{u}|^{2} |x_{hu}|}
 \end{align*}
 and using \eqref{3.3a}, \eqref{3.1}, and the regularity assumptions 
 we obtain
 \begin{align*}
 \int_{0}^{2\pi} B \,\, &\geq - C(\| y_{u}-y_{hu}\|_{L^{2}}^{2} + \| x_{u} -x_{hu}\|_{L^{2}}^{2}) \\
 & \quad +
 \frac{d}{d t} \left(\int_{0}^{2\pi } \left( \left( \frac{|x_{hu}|}{|x_{u}| } -1 \right) (\tau_{h} -\tau) 
+\frac{1}{2} \frac{|x_{hu}|}{|x_{u}|} |\tau-\tau_{h}|^{2} \tau \right) \cdot y_{u} \right).
 \end{align*}
 Putting together the estimates yields the result.
\end{proof}

The next lemma serves to estimate the additional errors defined in the previous lemma and collected in $\zeta$. 

\begin{lemma}\label{lem3.4}
Assume \eqref{3.3a} holds on $[0, \bar{t}]$. Then for $\epsilon >0$ and $t \in [0, \bar{t}]$ we have
\begin{align*}
\| (\tau -\tau_{h})(t) \|_{L^{2}}^{2} \leq \epsilon \big{(} \| (y-y_{h})(t) \|_{L^{2}}^{2} + \| (x_{u} - x_{hu})(t)\|_{L^{2}}^{2} \big{)} + C_{\epsilon} \big{(} h^{2} + \| (x -x_{h})(t) \|_{L^{2}}^{2} \big{)}.
\end{align*}
In particular it follows for $\zeta(t)$ defined as in Lemma~\ref{lem3.3} that
\begin{align*}
\zeta(t) \geq \frac{c_{0}}{16} \| (y- y_{h})(t) \|_{L^{2}}^{2} +\frac{\lambda}{8} \|( x_{u} -x_{hu})(t)\|_{L^{2}}^{2} - Ch^{2} 
-C \| (x -x_{h})(t) \|_{L^{2}}^{2}.
\end{align*}
\end{lemma}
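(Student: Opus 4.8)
Proof plan. The plan is to prove the pointwise-in-time estimate on $\|\tau-\tau_h\|_{L^2}$ first, and then to read off the lower bound for $\zeta$ by estimating its terms one at a time; the first part runs parallel to \cite[Lemma~3.4]{DD09}.

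\emph{First estimate.} Writing $e_I:=I_hx-x_h\in X_h^n$, I would test both \eqref{2.3} and \eqref{2.10} with $\psi=\psi_h=e_I$ and subtract. Since $(I_hx-x_h)_u=(I_hx)_u-x_{hu}=Q_hx_u-x_{hu}$, this gives
\[
\int_0^{2\pi}(\tau-\tau_h)\cdot(Q_hx_u-x_{hu})=\int_0^{2\pi}I_h(y_h\cdot e_I)\,|x_{hu}|-\int_0^{2\pi}(y\cdot e_I)\,|x_u|=:\mathrm{RHS}.
\]
The crucial point is the algebraic identity, valid since $x_u=|x_u|\tau$, $x_{hu}=|x_{hu}|\tau_h$ and $|\tau|=|\tau_h|=1$,
\[
(\tau-\tau_h)\cdot(x_u-x_{hu})=(|x_u|+|x_{hu}|)(1-\tau\cdot\tau_h)=\tfrac12(|x_u|+|x_{hu}|)\,|\tau-\tau_h|^2 .
\]
Writing $Q_hx_u-x_{hu}=(Q_hx_u-x_u)+(x_u-x_{hu})$, the left-hand side becomes $\tfrac12\int(|x_u|+|x_{hu}|)|\tau-\tau_h|^2+\int(\tau-\tau_h)\cdot(Q_hx_u-x_u)$, whose first summand is $\ge\frac{c_0}{2}\|\tau-\tau_h\|_{L^2}^2$ by \eqref{3.3a}; the second is $O(h^2)$, because on each $I_j$ the vector $\tau_h$ is constant and $\int_{I_j}(Q_hx_u-x_u)=0$, so that contribution equals $\sum_j\int_{I_j}(\tau-\tau(u_{j-1}))\cdot(Q_hx_u-x_u)$, which is $\le C\sum_j h_j^3\le Ch^2$ by the smoothness of $x$ and \eqref{2.8}-type bounds. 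For $\mathrm{RHS}$ I would use the decomposition
\[
\mathrm{RHS}=\int\big(I_h(y_h\cdot e_I)-y_h\cdot e_I\big)|x_{hu}|+\int\big((y_h-y)\cdot e_I\big)|x_{hu}|+\int(y\cdot e_I)(|x_{hu}|-|x_u|),
\]
treating the first term with \eqref{2.7}, the inverse estimate \eqref{invest} in the form $\|y_{hu}\|_{L^2}\le Ch^{-1}\|y_h\|_{L^2}$ together with $\|y_h\|_{L^2}\le\|y-y_h\|_{L^2}+C$ and $\|e_{Iu}\|_{L^2}\le Ch+\|x_u-x_{hu}\|_{L^2}$; the second with Cauchy--Schwarz; the third with \eqref{pipe}. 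Combined with $\|e_I\|_{L^2}\le\|x-x_h\|_{L^2}+Ch^2$ from \eqref{2.5}, the bound \eqref{3.3a} and Young's inequality, this gives $|\mathrm{RHS}|\le\epsilon(\|y-y_h\|_{L^2}^2+\|x_u-x_{hu}\|_{L^2}^2)+C_\epsilon(h^2+\|x-x_h\|_{L^2}^2)$ once $h\le h_0(\epsilon)$. Dividing by $c_0/2$ and relabelling $\epsilon$ yields the first estimate.

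\emph{Lower bound on $\zeta$.} I would keep the term $\frac{\lambda}{2}\|x_u-x_{hu}\|_{L^2}^2$; bound the second term below by $\frac12\int I_h(|I_hy-y_h|^2)|x_{hu}|\ge\frac{c_0}{4}\|I_hy-y_h\|_{L^2}^2\ge\frac{c_0}{8}\|y-y_h\|_{L^2}^2-Ch^2$ using \eqref{2.6}, \eqref{3.3a} and $\|y-I_hy\|_{L^2}\le Ch^2$ from \eqref{2.5}; and bound the remaining three terms in absolute value, with the help of $|y|\le C_0$ from \eqref{3.1}, $\|y_u\|_{L^\infty}\le C$, \eqref{3.3a}, \eqref{pipe} and Young's inequality, by $C_\delta\|\tau-\tau_h\|_{L^2}^2+\delta\|x_u-x_{hu}\|_{L^2}^2$ for arbitrarily small $\delta>0$. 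Inserting the first estimate and then choosing $\delta$, and afterwards $\epsilon$, small enough that the coefficients remaining on $\frac{\lambda}{2}\|x_u-x_{hu}\|_{L^2}^2$ and $\frac{c_0}{8}\|y-y_h\|_{L^2}^2$ stay above $\frac{\lambda}{8}$ and $\frac{c_0}{16}$ respectively, yields the claim, the residual $C_\delta C_\epsilon(h^2+\|x-x_h\|_{L^2}^2)$ being absorbed into $-Ch^2-C\|x-x_h\|_{L^2}^2$.

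\emph{Main obstacle.} The step carrying the actual content is the combination of the test function $\psi_h=I_hx-x_h$ with the identity $(\tau-\tau_h)\cdot(x_u-x_{hu})=\tfrac12(|x_u|+|x_{hu}|)|\tau-\tau_h|^2$: this is what converts the Galerkin consistency of \eqref{2.3}/\eqref{2.10} into genuine a~posteriori control of $\|\tau-\tau_h\|_{L^2}$ by the lower-order quantity $\|x-x_h\|_{L^2}$, up to $\epsilon$-small multiples of the higher-order errors. Everything else --- the elementwise mean-zero cancellation of $Q_hx_u-x_u$, the uses of \eqref{2.7} and \eqref{invest}, and the repeated Young inequalities together with \eqref{pipe} --- is routine and of the same nature as the corresponding steps in \cite{DD09}.
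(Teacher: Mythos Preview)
Your argument is correct and matches the paper's approach: the first estimate is obtained exactly as you describe (subtracting \eqref{2.3} and \eqref{2.10}, testing with $\psi_h=I_hx-x_h$, and exploiting the identity $(\tau-\tau_h)\cdot(x_u-x_{hu})=\tfrac12(|x_u|+|x_{hu}|)|\tau-\tau_h|^2$), and the lower bound on $\zeta$ follows by the same term-by-term estimation followed by insertion of the first part with $\epsilon$ small. One small point: your restriction ``once $h\le h_0(\epsilon)$'' is unnecessary, since the cross term $Ch\,\|y-y_h\|_{L^2}\|x_u-x_{hu}\|_{L^2}$ can be bounded by $\epsilon\|y-y_h\|_{L^2}^2+C_\epsilon h^2\|x_u-x_{hu}\|_{L^2}^2\le \epsilon\|y-y_h\|_{L^2}^2+C_\epsilon h^2$ via Young's inequality together with the a~priori bound $\|x_u-x_{hu}\|_{L^2}\le C$ coming from \eqref{3.1} and \eqref{3.3a}.
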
 
\begin{proof}
The first estimate concerning the difference in the unit tangents follows from \cite[Lemma $3.4$]{DD09}.
The idea there is to subtract \eqref{2.3} and \eqref{2.10} (which put into relation tangent and curvature vectors), take $\psi_{h}= I_{h}x- x_{h}$, use the fact that $(\tau-\tau_{h}) \cdot (x_{u}-x_{hu}) =\frac{1}{2}|\tau-\tau_{h}|^{2} (|x_{u}|+ |x_{hu}|) \geq C |\tau-\tau_{h}|^{2}$, and then estimate all appearing terms appropriately.

For the second estimate we use \eqref{2.6}, \eqref{3.3a}, \eqref{3.1}, Young's inequality, interpolation estimates, and smoothness assumptions 
to obtain 
\begin{align*}
\zeta(t) &\geq \frac{c_{0}}{8} \| (y- y_{h})(t) \|_{L^{2}}^{2} +\frac{\lambda}{4} \|( x_{u} -x_{hu})(t)\|_{L^{2}}^{2} -Ch^{2} -C \| (\tau -\tau_{h})(t) \|_{L^{2}}^{2}\\
& \geq \frac{c_{0}}{8} \| (y- y_{h})(t) \|_{L^{2}}^{2} +\frac{\lambda}{4} \|( x_{u} -x_{hu})(t)\|_{L^{2}}^{2} -Ch^{2} \\
& \qquad -
\epsilon C \big{(} \|( y-y_{h})(t) \|_{L^{2}} + \|( x_{u} - x_{hu})(t)\|_{L^{2}} \big{)} - C_{\epsilon} \big{(} h^{2} +\| (x -x_{h})(t) \|_{L^{2}}^{2} \big{)}
\end{align*}
where we have used the first statement in the second inequality. Choosing $\epsilon$ sufficiently small yields the claim.
\end{proof}

Finally, we need to express the error $\| y_{u} -y_{hu}\|_{L^{2}}$ appearing on the right-hand side on Lemma~\ref{lem3.3} in terms of suitable norms so that we can apply a Gronwall argument. 
\begin{lemma}\label{lem3.5}
Assume \eqref{3.3a} and \eqref{3.3b} hold on $[0, \bar{t}]$.
 Then for any $t \in (0, \bar{t})$ and $\epsilon >0$ we have that
\begin{align*}
\|y_{u}-y_{hu} \|_{L^{2}}^{2} \leq \epsilon \| x_{t} -x_{ht} \|_{L^{2}}^{2} + C_{\epsilon} ( h^{2} + \| x- x_{h} \|_{L^{2}}^{2} +
\| y- y_{h} \|_{L^{2}}^{2} + \| x_{u}- x_{hu} \|_{L^{2}}^{2} ).
\end{align*}
\end{lemma}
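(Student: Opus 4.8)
\textbf{Proof proposal for Lemma~\ref{lem3.5}.}

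The plan is to extract the required bound on $\|y_u - y_{hu}\|_{L^2}$ directly from the weak equations \eqref{2.2} and \eqref{2.9} (the $x$-equation), using the $\nabla_s^2 y$ term there as the source of control on $y_{hu}$. First I would subtract \eqref{2.9} from \eqref{2.2} tested against a common $\phi_h \in X_h^n$, apply \eqref{2.7} to replace the mass-type terms by their interpolated lumped versions plus $h_j^2$-corrections, and isolate the term $\int_0^{2\pi} \frac{P_h y_{hu}\cdot\phi_{hu}}{|x_{hu}|}$. The natural test function is $\phi_h = I_h y - y_h$, so that $\phi_{hu} = (I_h y)_u - y_{hu}$ and the principal term becomes, up to adding and subtracting $(I_h y)_u$ and using \eqref{3.3a},
\begin{align*}
\int_0^{2\pi} \frac{P_h y_{hu}\cdot y_{hu}}{|x_{hu}|} \;\geq\; \frac{1}{2C_0}\int_0^{2\pi} |P_h y_{hu}|^2 - C\,h^2 - C\|y_u - y_{hu}\|_{L^2}^2 \cdot (\text{absorbed later}),
\end{align*}
while the remaining term $\int_0^{2\pi} \frac{P_h y_{hu}\cdot (I_h y)_u}{|x_{hu}|}$ is handled by writing $(I_h y)_u = (I_h y)_u - y_u + y_u$, bounding the interpolation error by \eqref{2.5} and the smooth term by smoothness of $y$ together with Young's inequality with a small parameter to absorb $\delta\|P_h y_{hu}\|_{L^2}^2$.

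The key remaining point is to pass from control of the \emph{normal} part $\|P_h y_{hu}\|_{L^2}$ to control of the full derivative $\|y_{hu}\|_{L^2}$, hence of $\|y_u - y_{hu}\|_{L^2}$. For this I would use the second weak equation \eqref{2.3}--\eqref{2.10}: differentiating \eqref{2.10} in space is not available, but the tangential part $\tau_h \cdot y_{hu}$ can be estimated because $y_h$ is (discretely) orthogonal to $\tau_h$ in the sense encoded by \eqref{2.10}; more precisely, $\tau_h\cdot y_{hu}|_{I_j}$ is controlled on each element since $\tau_h$ is piecewise constant and $|y_h|$ is bounded by \eqref{3.3b}, using $\partial_u(|y_h|^2) = 2 y_h\cdot y_{hu}$ and the relation between $y_h$ and $\tau_{hu}$ coming from \eqref{2.10}. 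Combining, $\|y_{hu}\|_{L^2}^2 \leq \|P_h y_{hu}\|_{L^2}^2 + \|\tau_h\cdot y_{hu}\|_{L^2}^2 \leq \|P_h y_{hu}\|_{L^2}^2 + C(h^2 + \|y-y_h\|_{L^2}^2 + \|x_u - x_{hu}\|_{L^2}^2)$, and then $\|y_u - y_{hu}\|_{L^2}^2 \leq 2\|y_{hu}\|_{L^2}^2 + 2\|y_u\|_{L^2}^2$ is not quite what we want — instead I would estimate $\|y_u - y_{hu}\|_{L^2} \leq \|(I_h y - y_h)_u\|_{L^2} + \|(y - I_h y)_u\|_{L^2}$, the second term being $O(h)$ by \eqref{2.5}, and the first being controlled by $\|P_h(I_h y - y_h)_u\|_{L^2} + \|\tau_h\cdot(I_h y - y_h)_u\|_{L^2}$ with both pieces now bounded as above (the normal piece from the tested $x$-equation, the tangential piece from the $y$-equation).

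The term $\int_0^{2\pi} I_h(x_t\cdot\phi_h)|x_{hu}| - \int_0^{2\pi}(x_t\cdot\phi_h)|x_u|$ appearing after subtraction contributes $\int_0^{2\pi}(x_t - x_{ht})\cdot(I_h y - y_h)$-type expressions; this is where the $\epsilon\|x_t - x_{ht}\|_{L^2}^2$ on the right-hand side is generated, via Young's inequality $|{\int (x_t - x_{ht})\cdot(I_h y - y_h)}| \leq \epsilon\|x_t - x_{ht}\|_{L^2}^2 + C_\epsilon\|I_h y - y_h\|_{L^2}^2 \leq \epsilon\|x_t - x_{ht}\|_{L^2}^2 + C_\epsilon(h^2 + \|y - y_h\|_{L^2}^2)$, the last step by \eqref{2.5} and the triangle inequality. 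The lower-order semilinear term $\frac12\int I_h(|y_h|^2)(\tau_h\cdot\phi_{hu})$ minus its continuous counterpart is bounded using \eqref{3.3b}, \eqref{pipe}, \eqref{2.6} and smoothness, again with a small multiple of $\|P_h y_{hu}\|_{L^2}^2$ absorbed to the left; the $\lambda\int x_{hu}\cdot\phi_{hu}$ term produces $\lambda\int (x_u - x_{hu})\cdot(I_h y - y_h)_u$ minus an interpolation remainder, bounded by $\delta\|(I_h y - y_h)_u\|_{L^2}^2 + C_\delta\|x_u - x_{hu}\|_{L^2}^2 + Ch^2$ — but the $\delta\|(I_h y - y_h)_u\|_{L^2}^2$ piece cannot be absorbed into the normal part alone, so I would first establish the tangential estimate, substitute it, and only then close the normal estimate after choosing $\delta$ small relative to $1/C_0$.

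\textbf{Main obstacle.} The genuine difficulty is the second paragraph: recovering the tangential component $\tau_h\cdot y_{hu}$. Unlike the continuous setting where $\tau\cdot y_u = -|y|^2 \cdot(\text{something smooth})$ follows from differentiating $\tau\cdot y = 0$, here $\tau_h$ is only piecewise constant and $\tau_h\cdot y_h \neq 0$ pointwise; one must exploit \eqref{2.10} carefully (as in \cite[Lemma~3.5]{DD09}) to show that the tangential derivative of $y_h$ is a lower-order quantity controlled by $\|y-y_h\|_{L^2}$, $\|x_u - x_{hu}\|_{L^2}$ and $h$, and this is the step I expect to require the most care with the interpolation identities \eqref{2.6}, \eqref{2.7} and the mesh regularity \eqref{extra-ass}.
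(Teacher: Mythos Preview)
Your strategy matches the paper's: subtract \eqref{2.2} and \eqref{2.9}, test with $\phi_h = I_h y - y_h$, split the derivative error into a $P_h$-normal part (controlled from the tested $x$-equation, with the $\epsilon\|x_t - x_{ht}\|_{L^2}^2$ term arising from the time-derivative contribution via Young) and a $\tau_h$-tangential part (handled by the argument of \cite[Lemma~3.5]{DD09}, which is where \eqref{extra-ass} enters). The paper organises the principal term slightly more cleanly, writing it directly as $\int \frac{1}{|x_{hu}|} P_h(y_u - y_{hu}) \cdot (I_h y - y_h)_u \geq \frac{1}{4C_0}\|P_h(y_u - y_{hu})\|_{L^2}^2 - Ch^2$ and thereby avoiding your detour through $\|P_h y_{hu}\|_{L^2}^2$ before arriving at $\|P_h(I_h y - y_h)_u\|_{L^2}$.

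One caveat on the tangential piece: the heuristic you offer --- via $\partial_u(|y_h|^2) = 2\, y_h \cdot y_{hu}$ --- is not the operative mechanism, since this controls $y_h \cdot y_{hu}$ (an essentially normal component, as $y_h$ is approximately normal) rather than $\tau_h \cdot y_{hu}$. The actual argument in \cite{DD09} is nodal: testing \eqref{2.10} with nodal basis functions gives $y_h(u_j) = 2(\tau_h|_{I_{j+1}} - \tau_h|_{I_j})/(q_j + q_{j+1})$, and combining this with the continuous identity $\tau \cdot y_u = -|x_u|\,|y|^2$ produces an element-wise expression for $(y_u - y_{hu}) \cdot \tau_h$ involving differences of $\tau_h$, $|x_{hu}|$ and $y_h$ across \emph{neighbouring} elements; bounding those neighbour-differences is exactly where \eqref{extra-ass} is used. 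You correctly flag this as the delicate step and point to the right reference, so the overall plan is sound even though the sketched justification would not go through as written.
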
 
\begin{proof}
The proof is an adaptation of \cite[Lemma~3.5]{DD09} to the present setting. 
For the convenience of the reader, we give here the main arguments and refer to \cite{DD09} wherever possible. 
We write 
\begin{align} \label{estim_lem4.5_start}
y_{u}-y_{hu} = P_{h} (y_{u}-y_{hu}) + \big{(} (y_{u}-y_{hu}) \cdot \tau_{h} \big{)} \tau_{h}
\end{align}
and estimate each component $\|P_{h} (y_{u}-y_{hu}) \|_{L^{2}}$ and $\|(y_{u}-y_{hu}) \cdot \tau_{h}\|_{L^{2}} $ as follows.
From \eqref{2.2} and \eqref{2.9} we infer
\begin{align}
\int_{0}^{2\pi} \frac{1}{|x_{hu}|} P_{h}(y_{u}-y_{hu}) \cdot \phi_{hu} &= \int_{0}^{2\pi} \left( \frac{1}{|x_{hu}|} P_{h}y_{u} -\frac{1}{|x_{u}|} Py_{u} \right) \cdot \phi_{hu} \nonumber \\
& \qquad + \int_{0}^{2\pi} \{ x_{t} \cdot \phi_{h} |x_{u}| - I_{h} (x_{ht} \cdot \phi_{h}) |x_{hu}| \} \nonumber \\
& \qquad -\frac{1}{2} \int_{0}^{2\pi} \{ |y|^{2} (\phi_{hu} \cdot \tau) - I_{h} (|y_{h}|^{2}) (\phi_{hu} \cdot \tau_{h}) \} \nonumber \\
& \qquad +\lambda \int_{0}^{2\pi} (x_{u} -x_{hu}) \cdot \phi_{hu} \qquad = \sum_{i=1}^{4} S_{i} \label{to_estim_lem3.5}
\end{align}
for $\phi_{h} \in X_{h}^{n}$. We choose $\phi_{h} =I_{h} y -y_{h}$ and estimates all terms separately.
Using that $P_{h} P_{h} = P_{h}$, the symmetry of the matrix $P_{h}$, and interpolation estimates we obtain that 
\begin{multline*}
\int_{0}^{2\pi} \frac{1}{|x_{hu}|} P_{h}(y_{u}-y_{hu}) \cdot (I_{h} y -y_{h})_{u} \\
\geq \frac{1}{2 C_{0}} \| P_{h} (y_{u}-y_{hu})\|_{L^{2}}^{2} - C_{\epsilon}h^{2} - \epsilon \| P_{h} (y_{u}-y_{hu})\|_{L^{2}}^{2} \\
\geq \frac{1}{4 C_{0}} \| P_{h} (y_{u}-y_{hu})\|_{L^{2}}^{2} - Ch^{2}.
\end{multline*}
The integral $S_{1}$ and $S_{4}$ are estimated in a standard fashion (using Young's inequality, interpolation estimates, \eqref{pipe}, and the regularity assumptions on $y$) 
to get that 
\begin{align*}
S_{1} + S_{4} \leq C_{\delta} (\| x_{u} -x_{hu} \|_{L^{2}}^{2} + \| \tau-\tau_{h}\|_{L^{2}}^{2})+ \delta \|y_{u} -y_{hu}\|_{L^{2}}^{2} + Ch^{2}.
\end{align*}
Next, using \eqref{2.7}, \eqref{2.2}, and \eqref{invest} we obtain that 
\begin{align*}
S_{2} &= \int_{0}^{2\pi} (x_{t} \cdot (I_{h} y -y_{h})) (|x_{u}| -|x_{hu}|) + \int_{0}^{2\pi} |x_{hu}| (x_{t}-x_{ht}) \cdot (I_{h} y -y_{h})
\\
& \quad - \sum_{j=1}^{N} \frac{1}{6}h_{j}^{2}\int_{I_{j}} |x_{hu}| (x_{htu} - (I_{h} x_{t})_{u}+ (I_{h} x_{t})_{u}) \cdot (I_{h} y -y_{h})_{u}
\\
& \leq C \| x_{u} -x_{hu} \|_{L^{2}}^{2} + C_{\epsilon }h^{2} + C_{\epsilon} \| y -y_{h} \|_{L^{2}}^{2} +
\epsilon \| x_{t} -x_{ht}\|_{L^{2}}^{2}\\
& \quad + C h\|(I_{h} x_{t})_{u}\|_{L^{2}}\| I_{h} y -y_{h}
 \|_{L^{2}}
+ C \| x_{ht}-I_{h}x_{t} \|_{L^{2}} \|I_{h} y -y_{h} \|_{L^{2}}\\
& \leq C \| x_{u} -x_{hu} \|_{L^{2}}^{2} + C_{\epsilon }h^{2} + C_{\epsilon} \| y -y_{h} \|_{L^{2}}^{2} +
2\epsilon \| x_{t} -x_{ht}\|_{L^{2}}^{2}.
\end{align*}
Next, we write, using \eqref{2.7}, \eqref{3.1}, \eqref{3.3a}, \eqref{3.3b}, \eqref{pipe}, both inverse estimates \eqref{invest}, and smoothness assumptions, 
\begin{align*}
 S_{3} &= -\frac{1}{2} \int_{0}^{2\pi} |y|^{2} (I_{h} y -y_{h})_{u} \cdot (\tau-\tau_{h}) 
 -\frac{1}{2} \int_{0}^{2\pi} (|y|^{2} -|y_{h}|^{2}) (I_{h} y -y_{h})_{u} \cdot \tau_{h} \\
& \quad + \sum_{j=1}^{N} \frac{1}{6} h_{j}^{2} \int_{I_{j}} |(y_{hu}-(I_{h}y)_{u}) + (I_{h}y)_{u}|^{2} (I_{h} y - y_{h})_{u} \cdot \tau_{h} \\
 & \leq \delta \| y_{u} -y_{hu} \|_{L^{2}}^{2} + C_{\delta} \big{(} \| x_{u} - x_{hu} \|_{L^{2}}^{2} + \| y - y_{h} \|_{L^{2}}^{2} + h^{2} \big{)} \\
 & \quad + C \| I_{h} y -y_{h}\|_{L^{\infty}} \| I_{h} y -y_{h}\|_{L^{2}}\| (I_{h} y -y_{h})_{u}\|_{L^{2}} \\
 & \quad + Ch^{2}\| (I_{h} y)_{u}\|_{L^{\infty}} \| (I_{h} y)_{u}\|_{L^{2}} \| (I_{h} y -y_{h})_{u}\|_{L^{2}}\\
 & \leq 3 \delta \| y_{u} -y_{hu} \|_{L^{2}}^{2} + C_{\delta} \big{(} \| x_{u} - x_{hu}\|_{L^{2}}^{2} +\| y -y_{h}\|_{L^{2}}^{2} + h^{2}\big{)}.
\end{align*}
Putting all above estimates together we obtain from \eqref{to_estim_lem3.5} that 
\begin{align}\label{intermezzo}
\frac{1}{4 C_{0}} \| P_{h} (y_{u}-y_{hu})\|_{L^{2}}^{2} &\leq 4 \delta \| y_{u} -y_{hu} \|_{L^{2}}^{2} + 2\epsilon \| x_{t} -x_{ht}\|_{L^{2}}^{2} \\
& \quad + C_{\delta, \epsilon} (\| x_{u} - x_{hu}\|_{L^{2}}^{2} +\| y -y_{h}\|_{L^{2}}^{2} + h^{2}). \notag
\end{align}
The estimate
\begin{align*}
\int_{0}^{2\pi} |(y_{u}-y_{hu}) \cdot \tau_{h} | \leq C (h^{2} + \| \tau- \tau_{h}\|_{L^{2}} ^{2} + \| y-y_{h}\|_{L^{2}}^{2})
+ \| |x_{u}| - |x_{hu}| \|_{L^{2}}^{2}
\end{align*}
is provided in \cite[Lemma~3.5]{DD09}, to which we refer for more details. 
Note that this estimate requires the assumption \eqref{extra-ass}, which we haven't used elsewhere yet. 
Recalling \eqref{estim_lem4.5_start}, the claim now follows from the above estimate, \eqref{pipe}, \eqref{intermezzo}, by choosing $\delta$ appropriately small. 
\end{proof}

\subsection{Proof of the main result, Theorem \ref{mainteo}}

The proof is a straight-forward adapation of the techniques presented in \cite[Theorem~2.3]{DD09} to the present setting. We give it here for the sake of the reader.
First of all notice that from standard ODE theory we have local existence and uniqueness of a discrete solution $(x_{h}, y_{h})$ of \eqref{2.9}, \eqref{2.10}, \eqref{2.11} on $[0,T_{h}]$, $T_{h}>0$, for $h \leq h_{0}$ small enough.
Upon recalling \eqref{3.1}, define
\begin{align*}
\hat{T}_{h} := \sup \{ &t \in [0,T]\, :\, (x_{h}, y_{h}) \text{ solves } \eqref{2.9}, \eqref{2.10}, \eqref{2.11} \text{ on } [0,t] \text{ and }\\
& \frac{c_{0}}{2}  \leq |x_{hu}| \leq 2 C_{0}, \quad |y_{h}| \leq 2 C_{0} \text{ in } [0,t] \times [0, 2\pi]
\}.
\end{align*}
We show next that $\hat{T}_{h}=T$ if $h \leq h_{0}$ is sufficiently small.
Integrating in time the inequality obtained in Lemma~\ref{lem3.3} and using the fact that $\zeta(0) \leq Ch^{2}$ by Lemma~\ref{lem2.2} yields for $t \in (0,\hat{T}_{h}] $ that 
\begin{align*}
\frac{c_{0}}{16} 
\int_{0}^{t}\|x_{t}-x_{ht}\|^{2}_{L^{2}} dt' +\zeta(t) & \leq Ch^{2}+
C\int_{0}^{t} \big{(} \| y-y_{h}\|_{H^{1}}^{2} + \| x_{u} -x_{hu}\|_{L^{2}}^{2} \big{)} dt'.
\end{align*}
Using the second statement in Lemma~\ref{lem3.4} gives
\begin{align*}
\frac{c_{0}}{16} 
\int_{0}^{t} &\|x_{t}-x_{ht}\|^{2}_{L^{2}} dt' + \frac{c_{0}}{16} \| (y- y_{h})(t) \|_{L^{2}}^{2} +\frac{\lambda}{8} \|( x_{u} -x_{hu})(t)\|_{L^{2}}^{2} \\ 
&\leq Ch^{2} + C\| (x -x_{h})(t) \|_{L^{2}}^{2}
 +
C\int_{0}^{t}(\| y-y_{h}\|_{H^{1}}^{2} + \| x_{u} -x_{hu}\|_{L^{2}}^{2}) dt'.
\end{align*}
Adding $ \|( x -x_{h})(t)\|_{L^{2}}^{2} $ to both sides of the inequality, using the fact that
\begin{align*}
\|(x -x_{h})(t)\|_{L^{2}}^{2} &= \|( x -x_{h})(0)\|_{L^{2}}^{2} + 2 \int_{0}^{t} \int_{0}^{2\pi} ( x -x_{h})(t') \cdot ( x_{t} -x_{ht})(t') dx dt' \\
& \leq Ch^{2} + \epsilon \int_{0}^{t} \|x_{t}-x_{ht}\|^{2}_{L^{2}} dt' + C_{\epsilon} \int_{0}^{t} \|x-x_{h}\|^{2}_{L^{2}} dt'
\end{align*}
and using Lemma~\ref{lem3.5} we obtain for appropriate choice of $\epsilon$ that 
\begin{align*}
\frac{c_{0}}{32} 
\int_{0}^{t} &\|x_{t}-x_{ht}\|^{2}_{L^{2}} dt' + \frac{c_{0}}{16} \| (y- y_{h})(t) \|_{L^{2}}^{2} +\frac{\lambda}{8} \|( x_{u} -x_{hu})(t)\|_{L^{2}}^{2} + \|( x -x_{h})(t)\|_{L^{2}}^{2} \\
&\leq Ch^{2}+
C\int_{0}^{t}(\| y-y_{h}\|_{L^{2}}^{2} + \| x_{u} -x_{hu}\|_{L^{2}}^{2} +\| x -x_{h} \|_{L^{2}}^{2} )dt'.
\end{align*}
The Gronwall Lemma implies that 
\begin{align}\label{3.39}
\sup_{[0,\hat{T}_{h}]} \big( \|( y-y_{h}) (t)\|_{L^{2}}^{2} + \| (x -x_{h})(t) \|_{H^{1}}^{2} \big) +
\int_{0}^{\hat{T}_{h}} &\|x_{t}-x_{ht}\|^{2}_{L^{2}} dt' \leq Ch^{2}.
\end{align}
Together with Lemma~\ref{lem3.5} we have therefore verified the bounds \eqref{3.13}, \eqref{3.14} on $[0, \hat{T}_{h}]$.
Note that the constants depend on $c_{0}$, $C_{0}$, $T$, $\lambda$, and norms of the continuous solution~$x$ only. 

We can now prove that $\hat{T}_{h}=T$. If we had $\hat{T}_{h} < T$, then \eqref{3.39} and an inverse estimate would imply
\begin{align*}
 \| (x_{u} -x_{hu})(t) \|_{L^{\infty}} \leq C \sqrt{h}, \qquad \|( y-y_{h}) (t)\|_{L^{\infty}} \leq C \sqrt{h}. 
\end{align*}
Combined with \eqref{3.1} this would yield that 
\begin{align*}
 \frac{3c_{0}}{4}  \leq |x_{hu}|  \leq \frac{3}{2} C_{0}, \qquad |y_{h}| \leq \frac{3}{2} C_{0} \text{ in } [0,\hat{T}_{h}] \times [0, 2\pi]
\end{align*}
provided $h \leq h_{0}$, with $h_{0}$ small enough. This would contradict the maximality of $\hat{T}_{h}$. Therefore $\hat{T}_{h}=T$, and the theorem is proved.

\section{Numerical experiments}
\label{sec:numerics}

In this section we present a couple of computational results that support the results in Theorem \ref{mainteo} and illustrate the evolution and potential benefits of the new approach. 

\subsection{Time discretization} 

We discretize \eqref{2.9}, \eqref{2.10}, \eqref{2.11} in time analogously to \cite{DD09}. Let $\delta>0$ denote the (constant) time step size and let $m_{T} \in \mathbb{N}$ the number of time steps (so that $T = m_{T} \delta$).

\begin{algo}
Set $x^{(0)} = I_{h} x_{0}$ and successively determine $x_{h}^{(m+1)}$, $y_{h}^{(m+1)} \in X_{h}^{n}$ for $m=0,1,2, \ldots,m_{T}-1$ by solving the linear problem  
\begin{align} 
\int_{0}^{2\pi}I_{h} \Big{(} \frac{x_{h}^{(m+1)} -x_{h}^{(m)}}{\delta}\cdot \phi_{h} \Big{)} |x_{hu}^{(m)}| - \int_{0}^{2\pi} \frac{P_{h}^{(m)} y_{hu}^{(m+1)} \cdot \phi_{hu}}{|x_{hu}^{(m)}|} \nonumber \\
 -\frac{1}{2}
\int_{0}^{2\pi} I_{h} \big{(} |y_{h}^{(m)}|^{2} \big{)} \Big{(} \frac{x_{hu}^{(m+1)}}{|x_{hu}^{(m)}|} \cdot \phi_{hu} \Big{)} 
+ \lambda \int_{0}^{2\pi} x_{hu}^{(m+1)} \cdot \phi_{hu} &=0, \label{2.9d} \\
\int_{0}^{2\pi} I_{h} \big{(} y_{h}^{(m+1)} \cdot \psi_{h} \big{)} |x_{hu}^{(m)}| + \int_{0}^{2\pi} \Big{(} \frac{x_{hu}^{(m+1)}}{|x_{hu}^{(m)}|} \cdot \psi_{hu} \Big{)} &=0, \label{2.10d} 
\end{align}
for all test functions $\varphi_{h}, \psi_{h} \in X_{h}^{n}$. 
Here, $P_{h}^{(m)} = Id - \tau_{h}^{(m)} \otimes \tau_{h}^{(m)}$ with $\tau_{h}^{(m)} = \frac{x_{hu}^{(m)}}{|x_{hu}^{(m)}|}$. 
\end{algo}
Writing 
$$x_{h}^{(m)} = \sum_{j=1}^N x_{j}^{(m)} \varphi_{j} \mbox{ and } y_{h}^{(m)} = \sum_{j=1}^N y_{j}^{(m)} \varphi_{j} \quad \mbox{with }x_{j}^{(m)}, y_{j}^{(m)} \in \R^{n},$$ 
the problem for time step $m$ can be formulated in terms of the coefficients and written in the form 
\begin{align*}
0 = \, & \frac{1}{2 \delta} (q_{j}^{(m)} + q_{j+1}^{(m)}) (x_{j}^{(m+1)} - x_{j}^{(m)}) \\
& + \frac{1}{q_{j+1}^{(m)}} P^{(m)}_{j+1} (y^{(m+1)}_{j+1} - y^{(m+1)}_{j})
-\frac{1}{q_{j}^{(m)}} P^{(m)}_{j} (y^{(m+1)}_{j} - y^{(m+1)}_{j-1}) \\
& + d^{(m)}_{j+1} (x_{j+1}^{(m+1)} -x_{j}^{(m+1)}) - d^{(m)}_{j} (x_{j}^{(m+1)} -x_{j-1}^{(m+1)}) 
\\
&+\lambda \Big{(} -\frac{1}{h_{j}} x_{j-1}^{(m+1)} + \big( \frac{1}{h_{j}} + \frac{1}{h_{j+1}} \big) x_{j}^{(m+1)} -\frac{1}{h_{j+1}} x_{j+1}^{(m+1)} \Big{)}, 
\\
0 = \, &\frac{1}{2} (q_{j}^{(m)} + q_{j+1}^{(m)}) y_{j}^{(m+1)} - \Big{(} \frac{1}{q_{j+1}^{(m)}} x_{j+1}^{(m+1)} - \big( \frac{1}{q_{j+1}^{(m)}}+\frac{1}{q_{j}^{(m)}} \big) x_{j}^{(m+1)} + \frac{1}{q_{j}^{(m)}}x_{j-1}^{(m+1)} \Big{)}, 
\end{align*}
where
\begin{align*}
 & q_{j}^{(m)}=|x_{j}^{(m)} -x_{j-1}^{(m)}|, \quad d_{j}^{(m)}= \frac{1}{4 q^{(m)}_{j}}(|y_{j}^{(m)}|^{2} + |y_{j-1}^{(m)}|^{2}), \mbox{ and } \\
 & P_{j}^{(m)}= Id - \tau_{j}^{(m)} \otimes \tau_{j}^{(m)} \mbox{ with } \tau_{j}^{(m)} = \frac{x_{j}^{(m)} -x_{j-1}^{(m)}}{q_{j}^{(m)}}.
\end{align*}
For our simulations we chose equi-distributed mesh points $u_{j} = j h$, $j = 0 \dots, N$, with $h = 2 \pi /N$. We used the commercial software Matlab \cite{MATLAB:2022a} to implement the scheme.

\subsection{Circular initial condition} 
\label{subsec:circ}

\begin{table}
{\footnotesize
\begin{center}
\begin{tabular}{|r|l|r|l||l|l|} \hline 
$N$ & $h$ & $m_{T}$ & $\delta$ & $\err$ & $\eoc$ \\ \hline \hline 
           20 &     0.31416 &         400 &      0.0025 &   1.556e-05 &          -- \\ \hline 
           30 &     0.20944 &         900 &   0.0011111 &  3.0805e-06 &      3.9944 \\ \hline 
           36 &     0.17453 &        1296 &   0.0007716 &  1.4864e-06 &       3.997 \\ \hline 
           46 &     0.13659 &        2116 &  0.00047259 &  5.5786e-07 &       3.998 \\ \hline 
           60 &     0.10472 &        3600 &  0.00027778 &  1.9279e-07 &      3.9988 \\ \hline 
           \end{tabular}
\end{center}
}
\caption{Errors and experimental orders of convergence for the self-similarly shrinking circle. See Section \ref{subsec:circ} for details.} 
\label{tab:conv_circ}
\end{table}

\begin{figure}
\begin{center}
 \includegraphics[width=4.2cm]{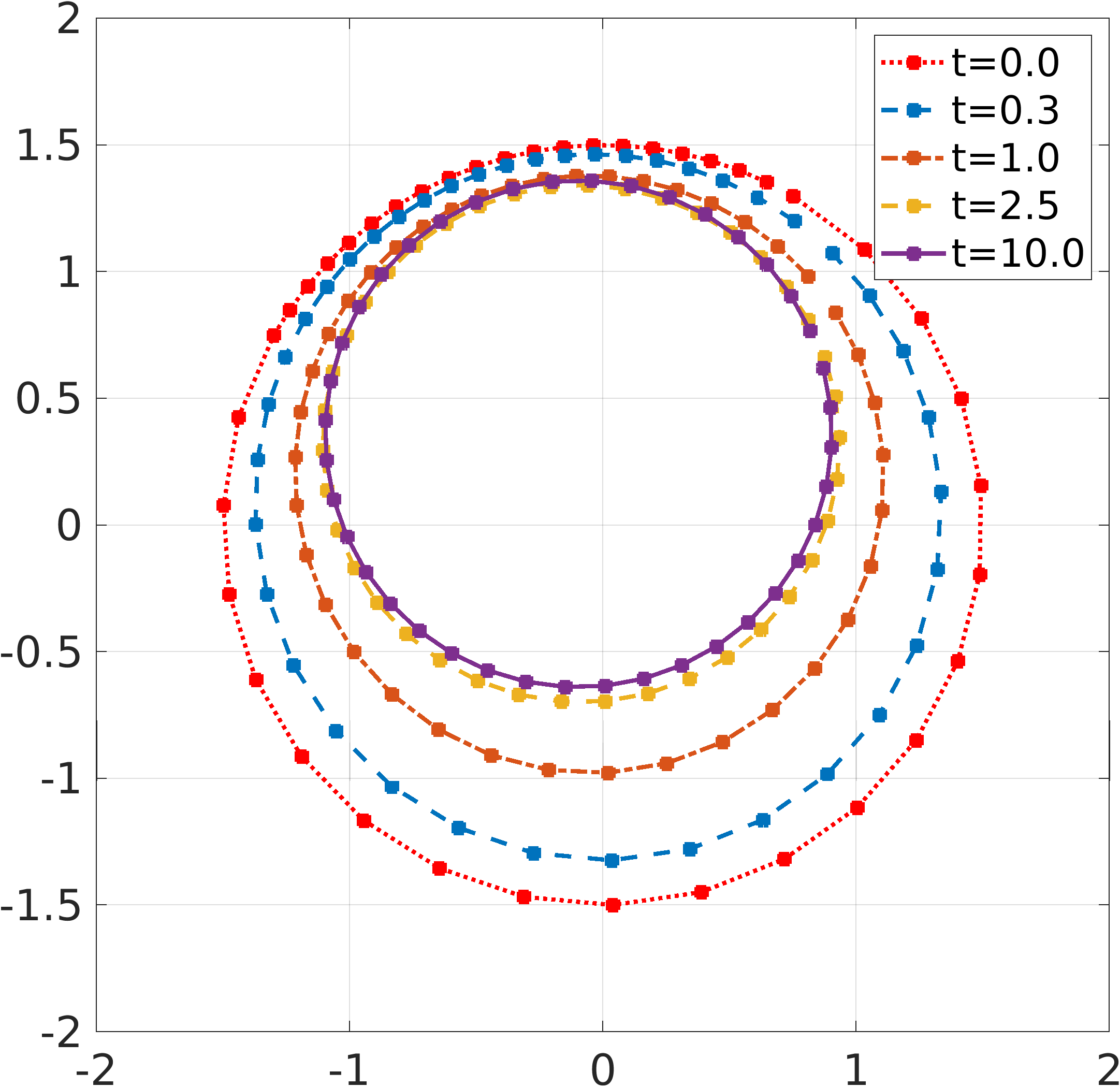} 
 \hfill 
 \includegraphics[width=4.2cm]{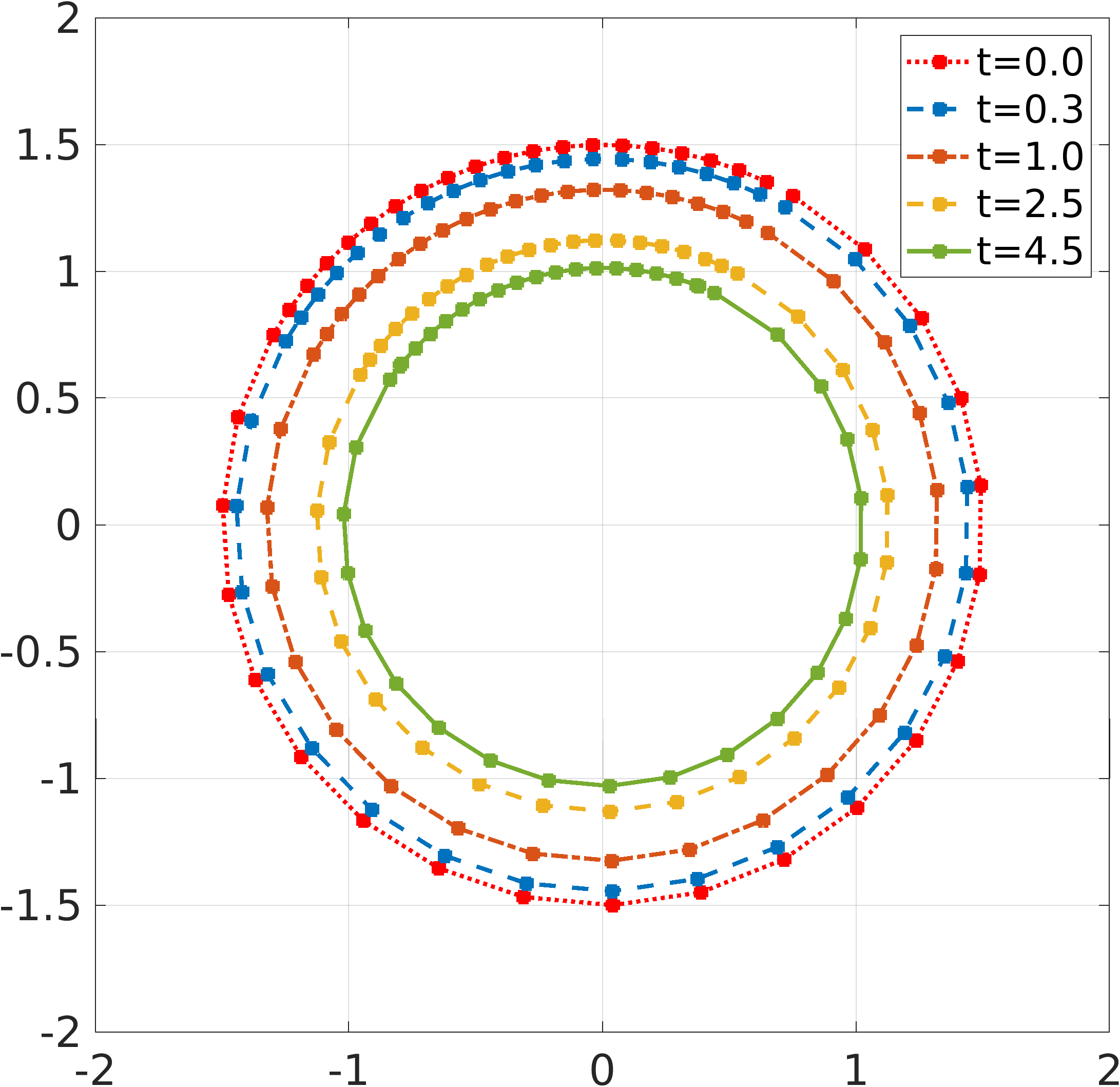} 
 \hfill 
 \includegraphics[width=4.2cm]{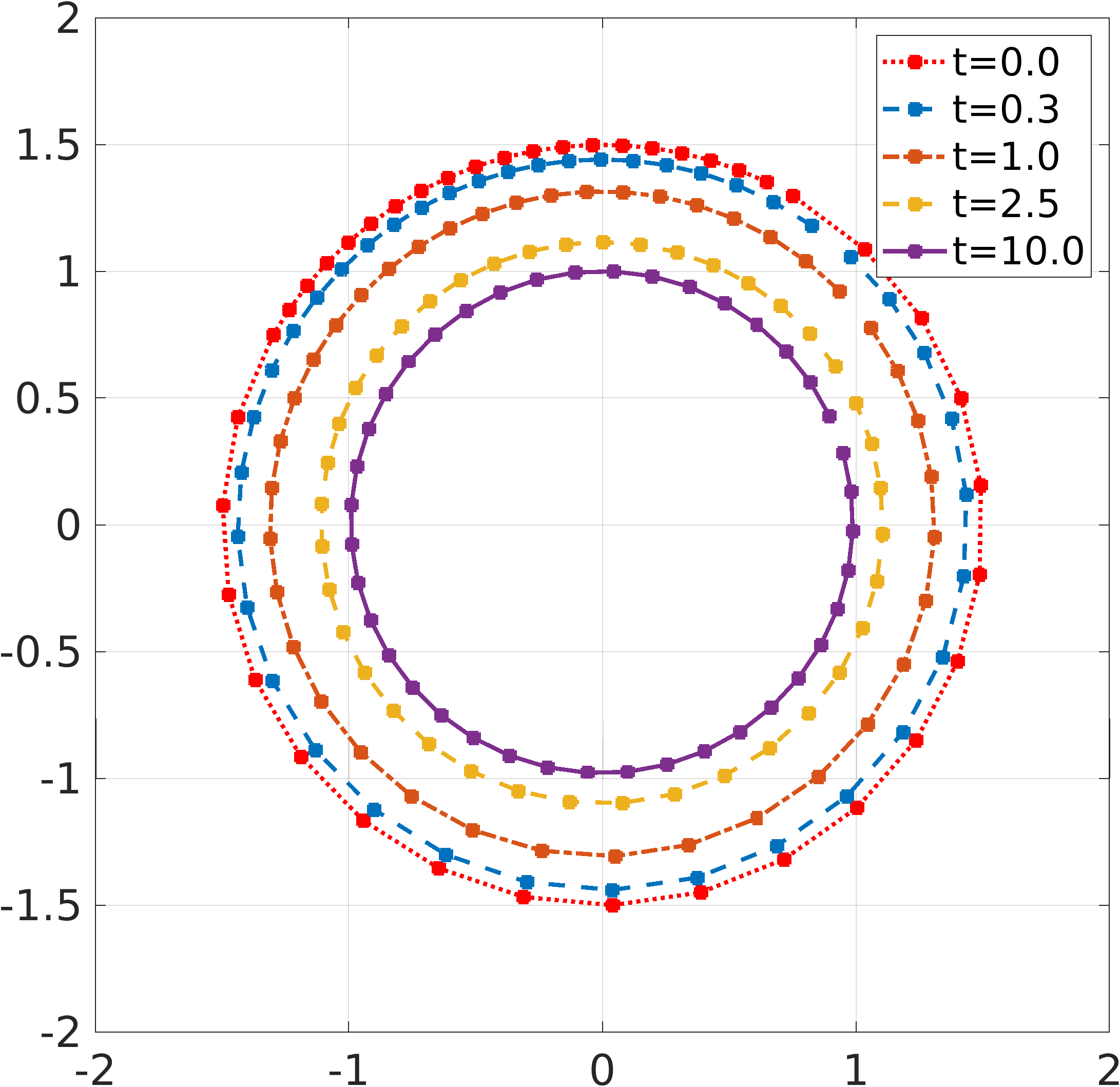} 
\end{center}
\caption{Discrete solutions at several times for the same initial condition (nonequidistributed points along a circle) but for different schemes. The new scheme \eqref{2.9d}, \eqref{2.10d} for \eqref{1.2} on the left, the scheme from \cite{DD09} for \eqref{flowDKS} in the middle, and the variant \eqref{eq:wf1dis}, \eqref{eq:wf2dis} on the right. See Sections \ref{subsec:circ} and \ref{subsec:var} for more detail. 
Note that here and in the following figures displaying solutions we plot the vertex values $x_{j}^{(m)} = x^{(m)}_{h}(u_{j})$ and connect subsequent positions but with the exception of the first and the last vertices. There is a small gap between $x_{1}^{(m)}$ and $x_{N}^{(m)}$ indicating where the parametrization starts and ends.
}
\label{fig:circ_nonequi}
\end{figure}

Recall from Example \ref{esempio} that a circle with radius $R(t)$ solves \eqref{1.2} (and thus also the weak formulation \eqref{2.2}, \eqref{2.3}) if $\dot{R}(t) = (1 - 2 \lambda R(t)^3) / (2 R(t)^3)$. We now aim for approximating such a solution and 
for supporting the theoretical result 
in Theorem \ref{mainteo}. 

We choose $\lambda = 1/2$ and an initial radius of $R(0) = 3/2$. The ODE for $R$ is solved with a standard numerical ODE solver until time $T=1$ (and still denoted with $R$). For the approximation with the scheme \eqref{2.9d}, \eqref{2.10d} we choose the initial condition $x_{0}(u) = \tfrac{3}{2} (\cos(u), \sin(u))$, which results in equi-distributed marker points $x_{j}^{(0)}$, $j=1, \dots, N$, along the circle. Below, we report on results for the time step size $\delta = 1 / m_{T}$ with $m_{T} = N^2$ for a given number of spatial mesh points $N$. These were confirmed with a smaller time step size, namely for $m_{T} = 2 N^2$ points in time. As a measure of the error, we monitored the deviation of curvature during the time stepping by computing 
\[
 \err^{(m)} = \frac{2\pi}{N} \sum_{j=1}^{N} \Big{|} y(t^{(m)},u_j) - y_{j}^{(m)} \Big{|}^2, \quad \err = \max_{1 \leq m \leq m_{T}} \err^{(m)}, 
\]
where $y(t,u) = \frac{-1}{R(t)} \frac{x_0(u)}{| x_0(u) |}$. Note that this is an approximation of the first term in the error estimate \eqref{3.14}. 

We observed that the scheme produced approximately circular shapes at each time step that were centred around the origin. The errors for several values of $N$ are stated in Table \ref{tab:conv_circ}. The experimental orders of convergence, $\eoc$, are close to four. The theoretical results \eqref{3.14} in Theorem \ref{mainteo} only predicts second order convergence, however the theoretical estimate might not be sharp, and the circular symmetry is very special and might be beneficial. 

The simulations were repeated with $N=40$ initially non-equidistributed points along the circle, namely half of the points equally distributed along a quarter of the circle and the other half of the points equally distributed along the rest. The time step size was $\delta = 5 \times 10^{-4}$. Figure~\ref{fig:circ_nonequi} on the left displays the initial configuration and the solution at several subsequent times. The shapes look almost circular, however the centre shifts upwards towards the part where the initial mesh was denser. The desired property of better distributing the mesh points can be clearly observed. The solution at the final time $t=10$ is almost stationary and seems stable. 

For comparison, we also made a simulation with the scheme in \cite{DD09} for the geometric flow \eqref{flowDKS} with $\tilde{\lambda} = 1/2$ and the same discretisation parameters and initial data. Figure~\ref{fig:circ_nonequi}, middle displays the shapes at the same times as for the new scheme except for the final time, which here is $t=4.5$. The circle shrinks approximately self-similarly. The mesh points move mostly in the normal direction and also drift very slowly in the tangential direction. However, here in a way that is detrimental to the mesh quality. The vertices in the transition region from the denser to the coarser area move towards the denser area. This leads to instabilities in the longer run and is the reason for the earlier final time. Soon after $t=4.5$ the mesh degenerated and the simulation was aborted.

\subsection{Convergence to a stationary lemniscate}
\label{subsec:lnc}

\begin{figure}
\begin{center}
 \includegraphics[width=6.5cm]{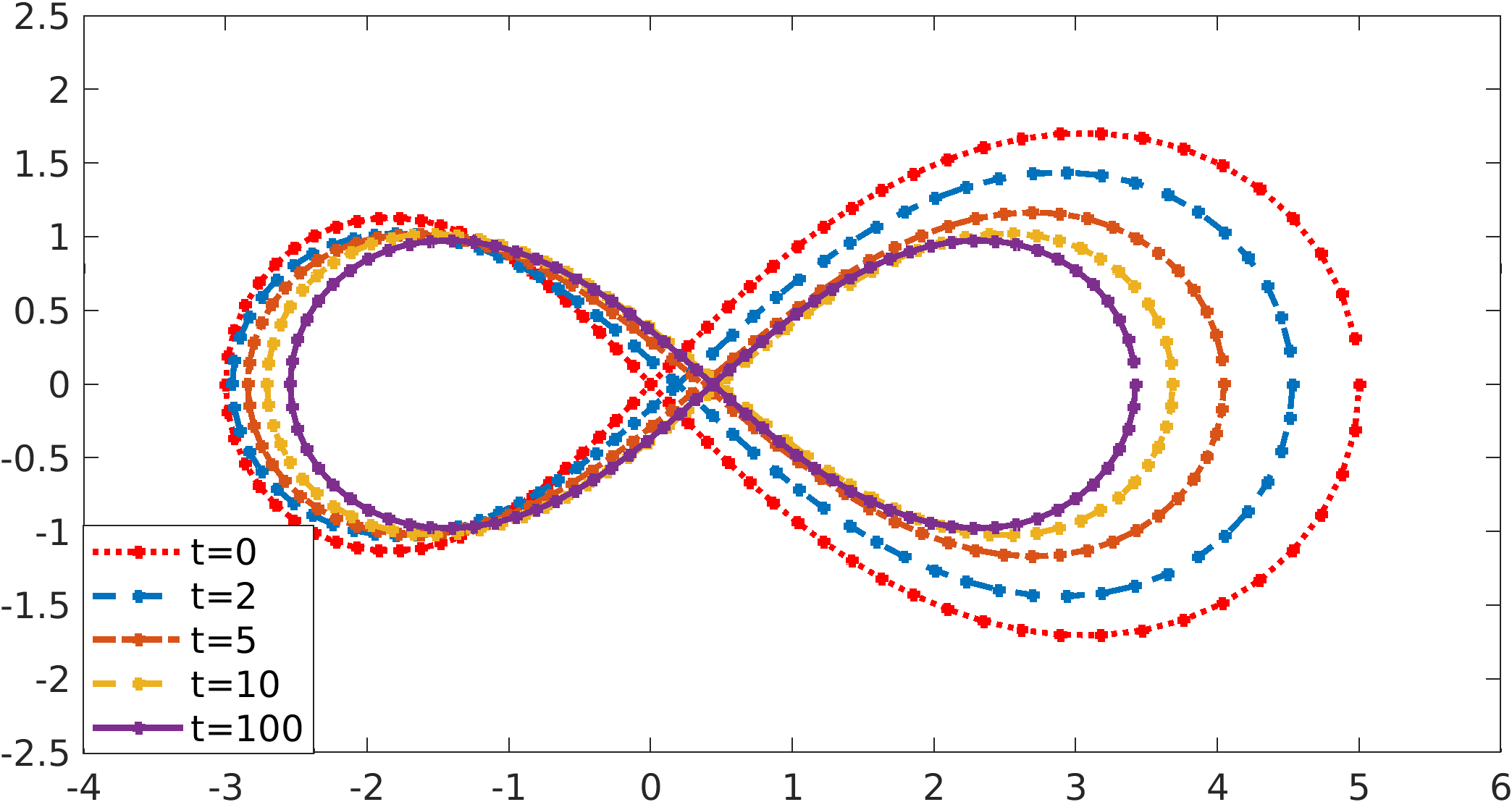} 
 \hfill 
 \includegraphics[width=6.5cm]{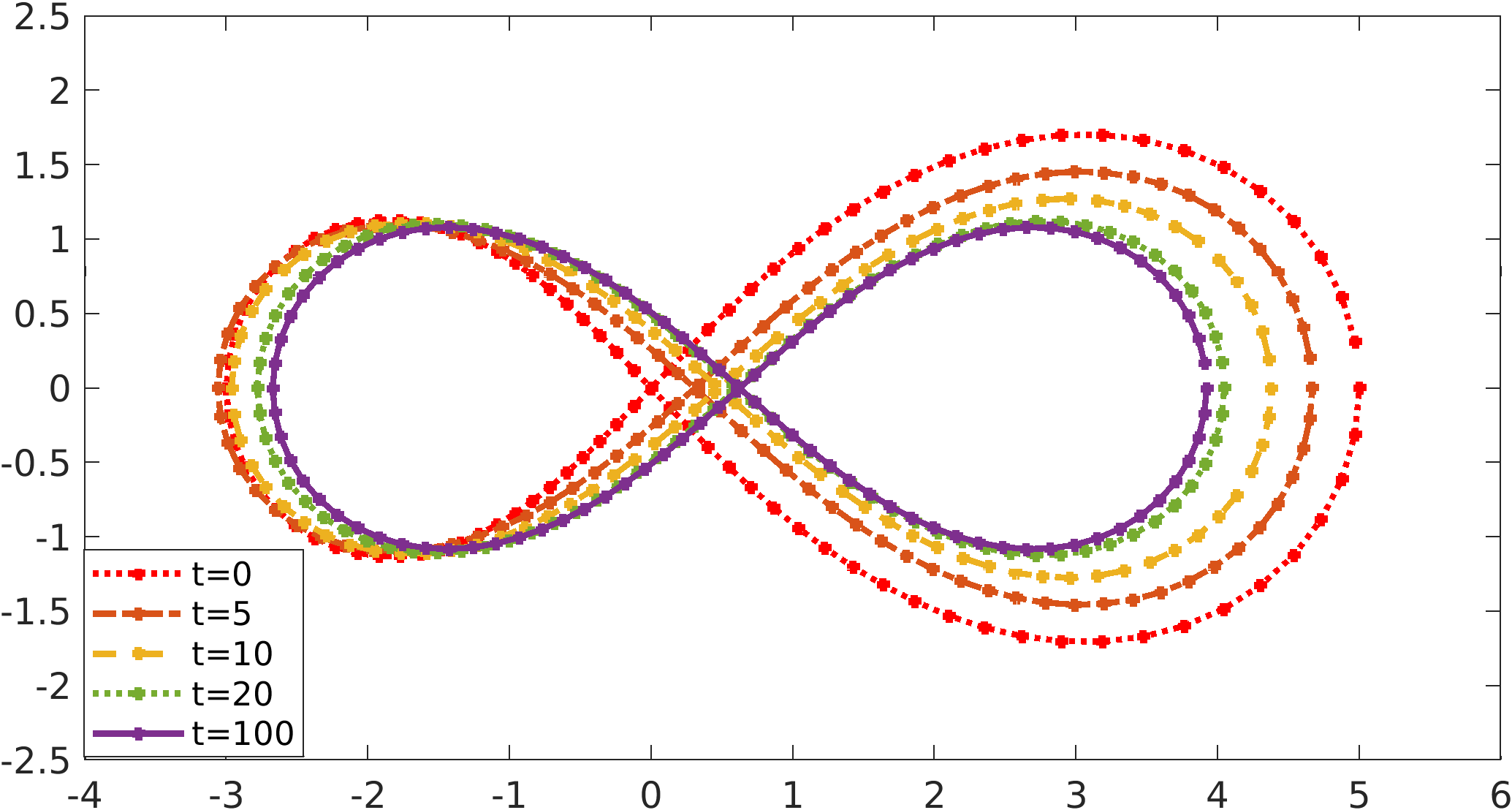} 
 \\
 \includegraphics[width=6.5cm]{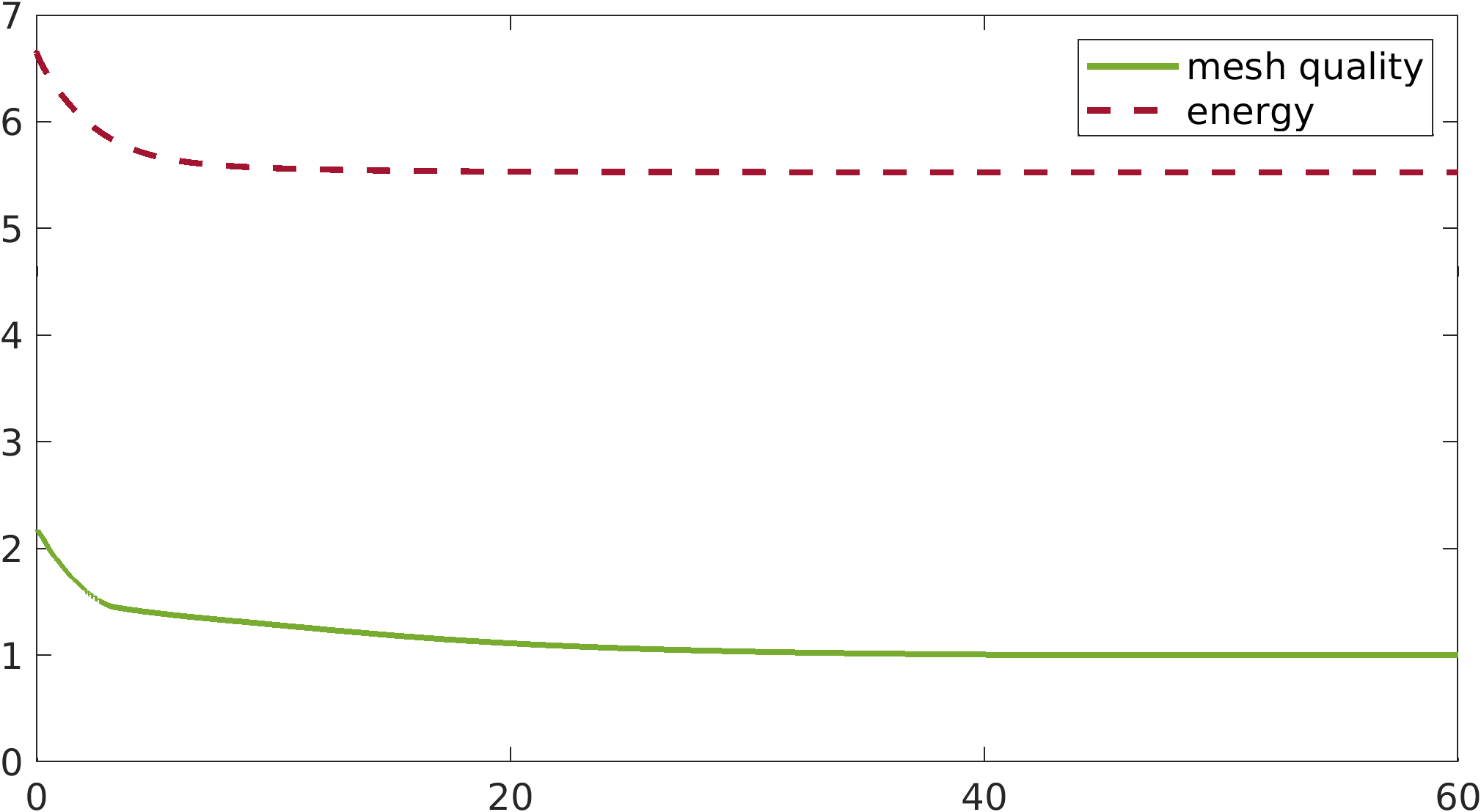} 
 \hfill 
 \includegraphics[width=6.5cm]{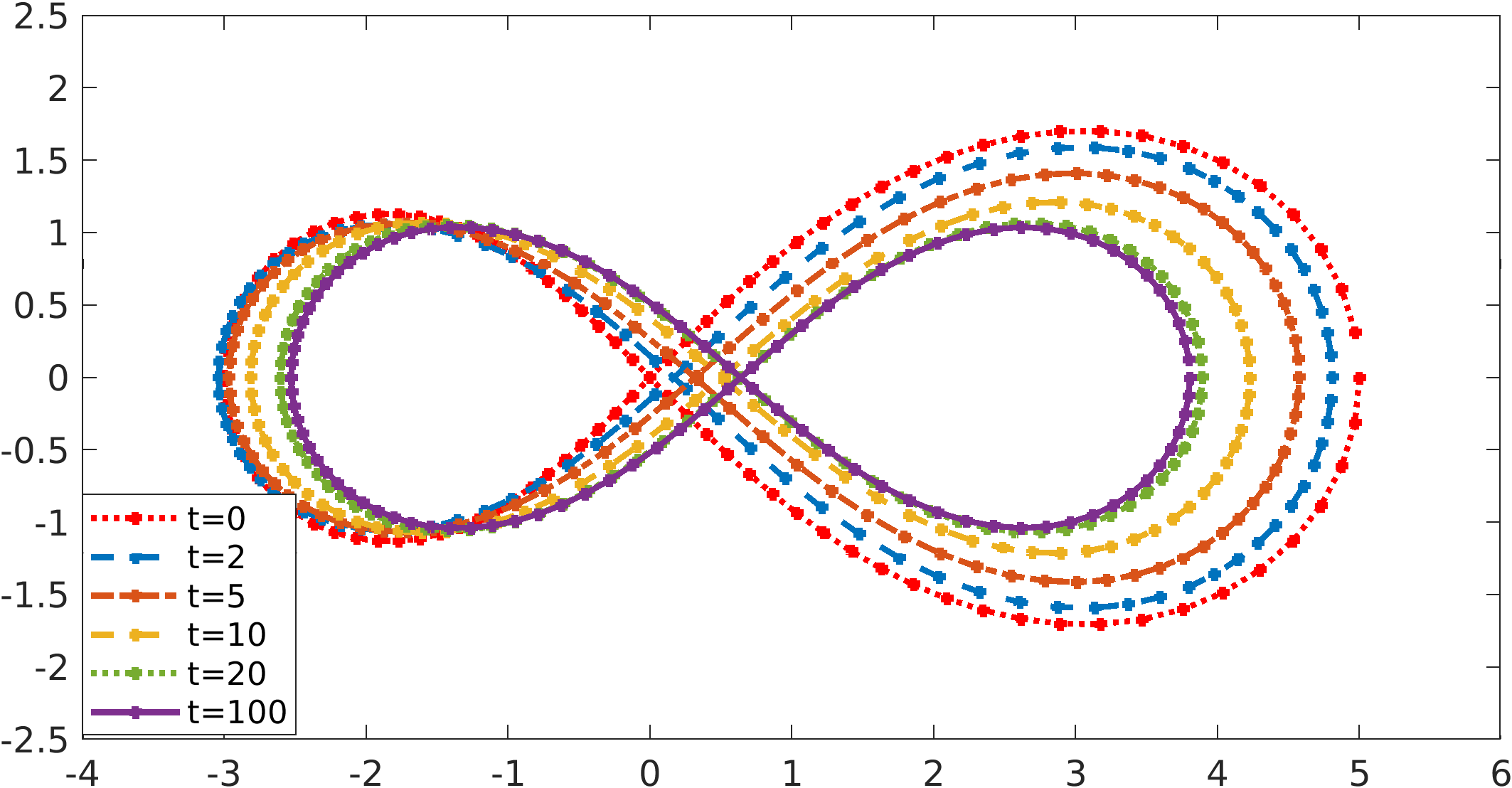} 
\end{center}
\caption{Relaxation of a nonsymmetric lemniscate. Upper left: new scheme \eqref{2.9d}, \eqref{2.10d} with $\lambda = 0.1$ for \eqref{1.2}. Upper right: the scheme in \cite{BGN2007} with $\tilde{\lambda} = 0.2$ for \eqref{flowDKS}. Bottom left: evolution of the energy \eqref{eq:dis_energ} and the mesh quality for the simulation on the top left. Bottom right: extended scheme \eqref{eq:wf1dis}, \eqref{eq:wf2dis} for \eqref{flowDKS} with $\tilde{\lambda} = 0.2$, $\eps = 0.004$, and the choice \eqref{eq:monitor} for $M$. See Sections \ref{subsec:lnc} and \ref{subsec:var} for further detail.}
\label{fig:lnc}
\end{figure}

To further illustrate the benefits of tangential redistribution of mesh points we relaxed an approximate nonsymmetric lemniscate to a stationary state. The initial data were given by 
\[
 x(u) = \frac{(\cos(u) + 4) \cos(u)}{1 + \sin^2(u)} \begin{pmatrix} 1 \\ \sin(u) \end{pmatrix}, 
\]
and we set $\lambda = 0.1$. Simulations were performed with $N=100$ mesh points so that $h = 2\pi / 100 \approx 0.0628$ and a time step size of $\delta = 10^{-3}$. 

Figure \ref{fig:lnc}, top left, displays the initial condition and the evolution. The final shape at time $t=100$ seems stationary. This is supported by the evolution of the discrete energy
\begin{equation} \label{eq:dis_energ}
 \mathcal{D}_{\lambda,h}^{(m)} = \frac{1}{2} \int_{0}^{2\pi} I_{h} (|y_{h}^{(m)}|^2) |x_{hu}^{(m)}| + \lambda |x_{hu}^{(m)}|^2 du, 
\end{equation}
which seems to become constant, see Figure \ref{fig:lnc} at the bottom left. In that graph we also display the evolution of 
\begin{equation} \label{eq:mesh_qual}
 \sigma = \max_{1 \leq j \leq N} q_{j}^{(m)} \Big{/} \min_{1 \leq j \leq N} q_{j}^{(m)}
\end{equation}
as a measure of the mesh quality. Note that a value of one corresponds to equidistribution of mesh points, and this value is approached over time. 

In Figure \ref{fig:lnc}, top right, we provide a comparison with the scheme in \cite{BGN2007}, Sec.~2.3, for \eqref{flowDKS} but with $\tilde{\lambda} = 0.2$ fixed. This higher value for $\tilde{\lambda}$ than for $\lambda$ only serves to ensure that the final, stationary shape is about the same size. The scheme in \cite{BGN2007} also addresses tangential redistribution of mesh points and leads to equidistribution in the long run, as is nicely visible in Figure \ref{fig:lnc}. It satisfies some stability properties but hasn't been proved to converge to the best of our knowledge. This is also the case for subsequent work by these authors such as \cite{BGN2010,BGN12}, which also contain comparisons with the schemes in \cite{DKS} and \cite{DD09} for similar test problems.

\subsection{Hypocycloids, 2D vs 3D}
\label{subsec:hpc}

\begin{figure}
\begin{center}
 \includegraphics[width=3.25cm]{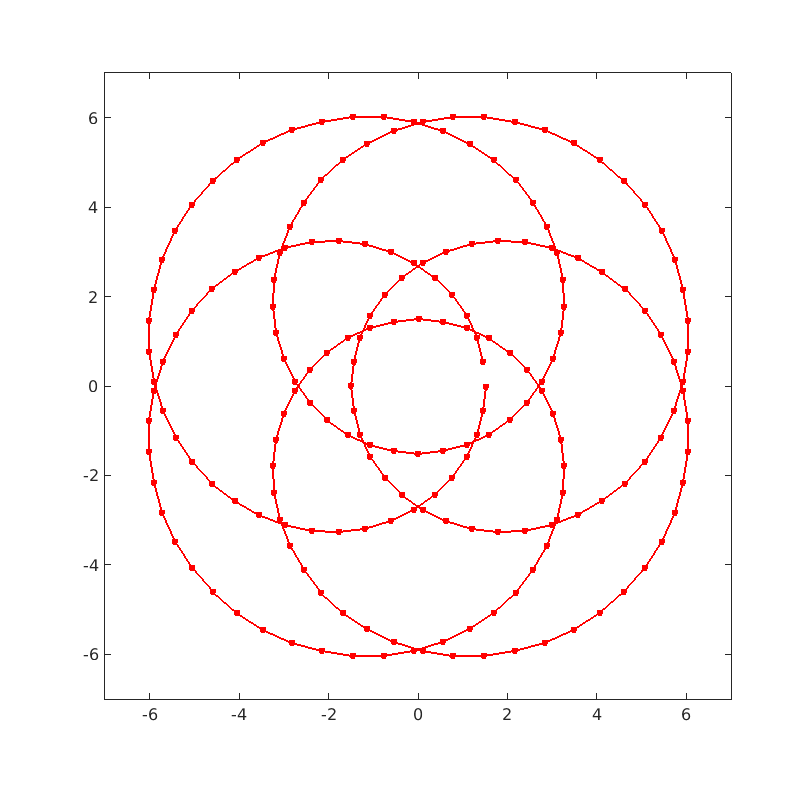} 
 \hfill 
 \includegraphics[width=3.25cm]{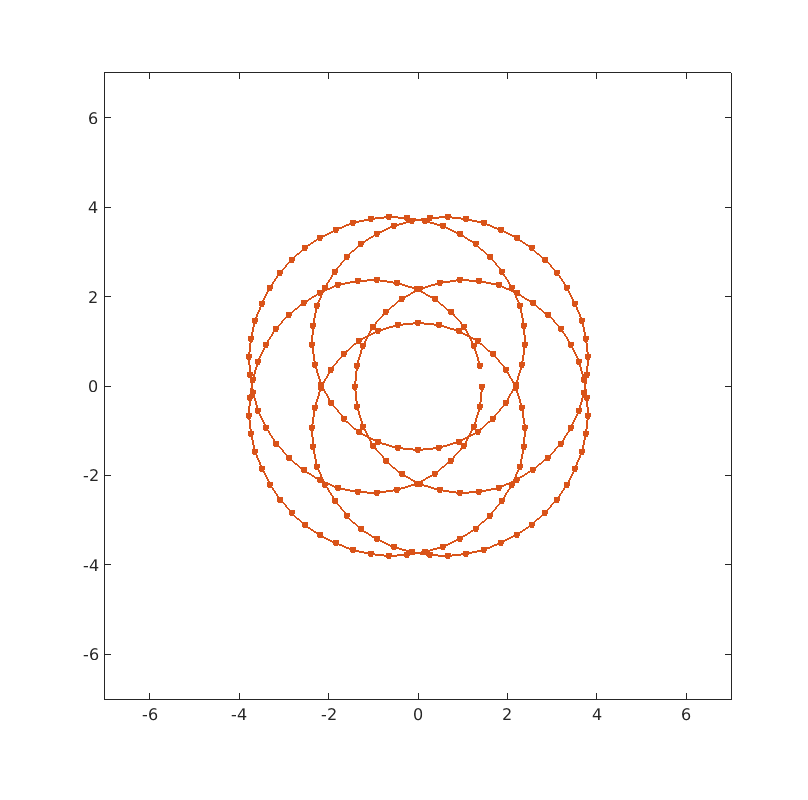} 
 \hfill
 \includegraphics[width=3.25cm]{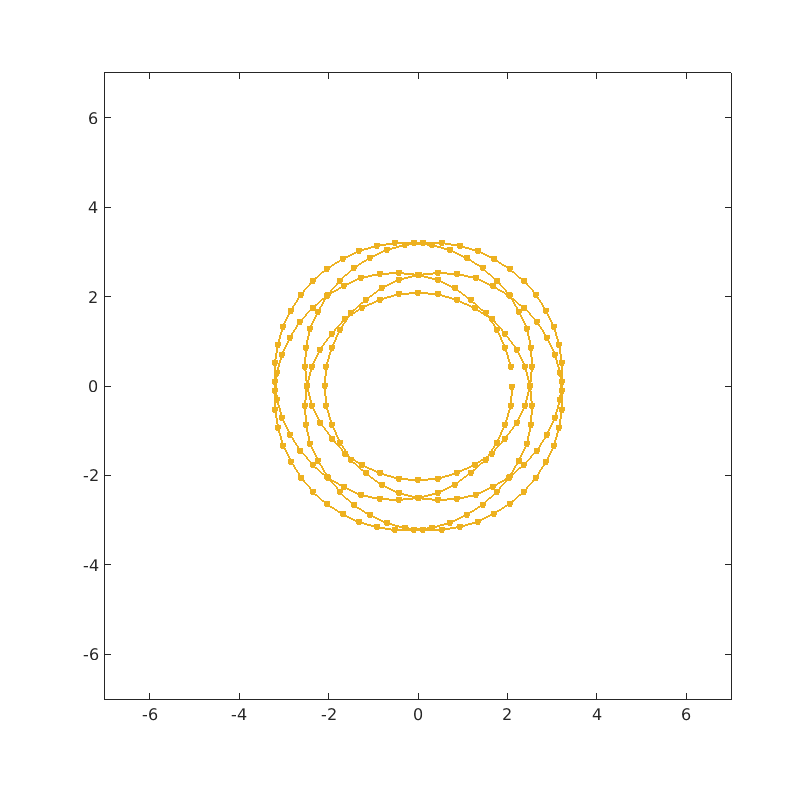} 
 \hfill 
 \includegraphics[width=3.25cm]{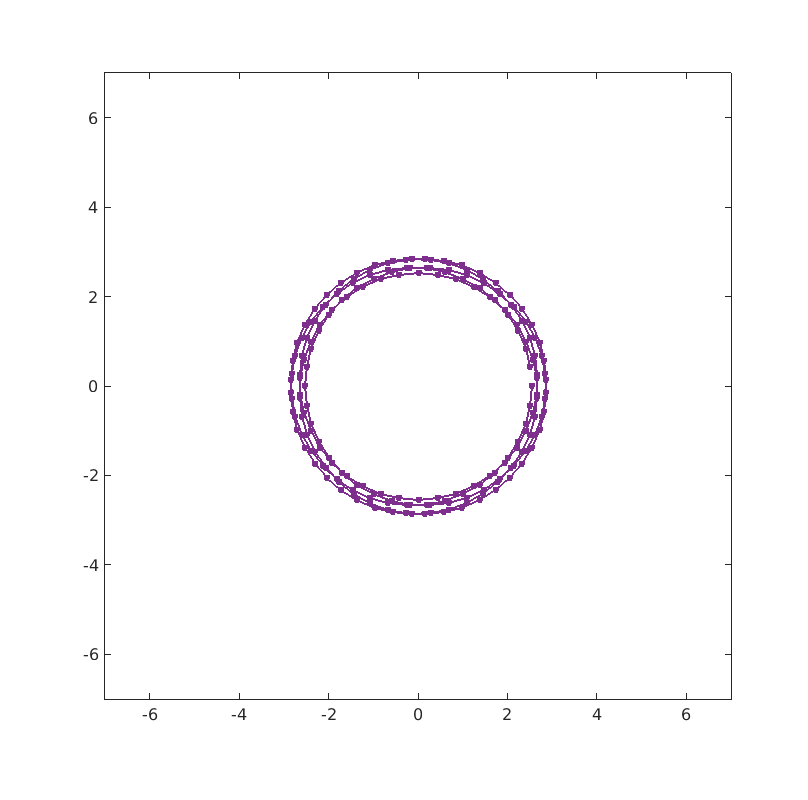} 
\end{center}
\caption{Relaxation of a hypocycloid to a multiply (here five fold) covered circle in 2D. The shape is displayed from the left to the right at times $t=0,600,1500,3000$. See Section \ref{subsec:hpc} for more detail.}
\label{fig:hpc2D}
\end{figure}

\begin{figure}
\begin{center}
 \includegraphics[width=3.25cm]{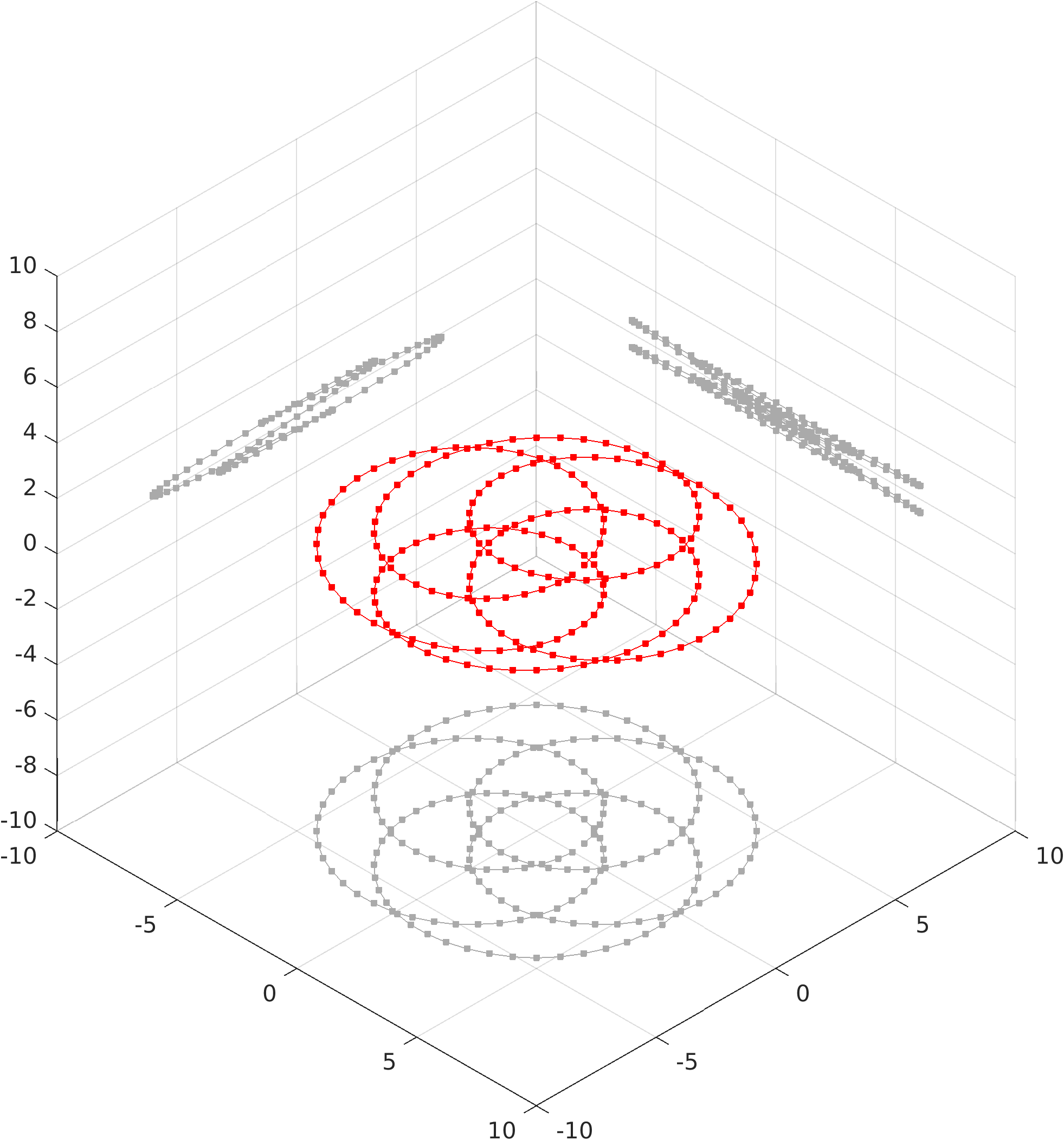} 
 \hfill 
 \includegraphics[width=3.25cm]{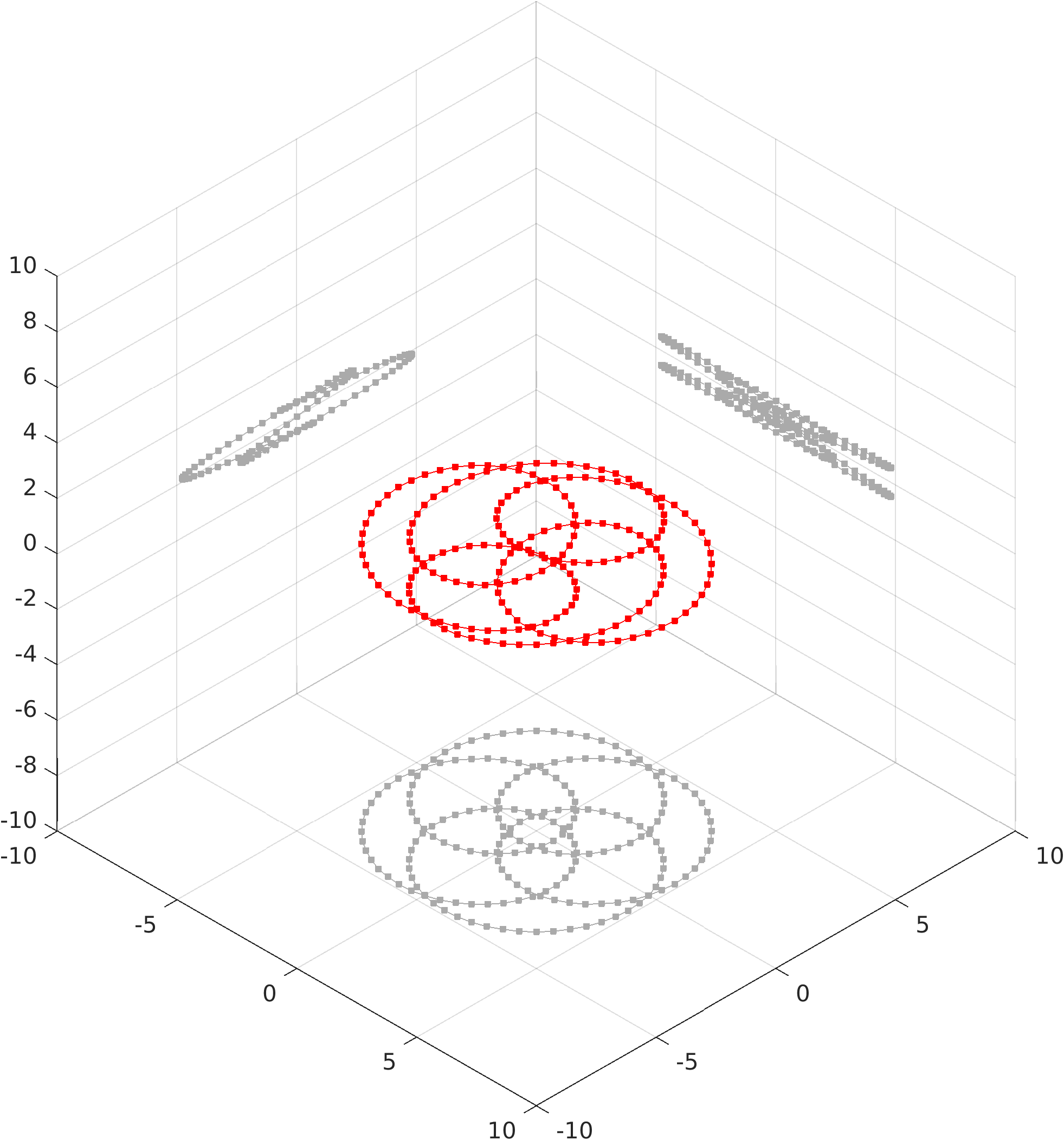} 
 \hfill
 \includegraphics[width=3.25cm]{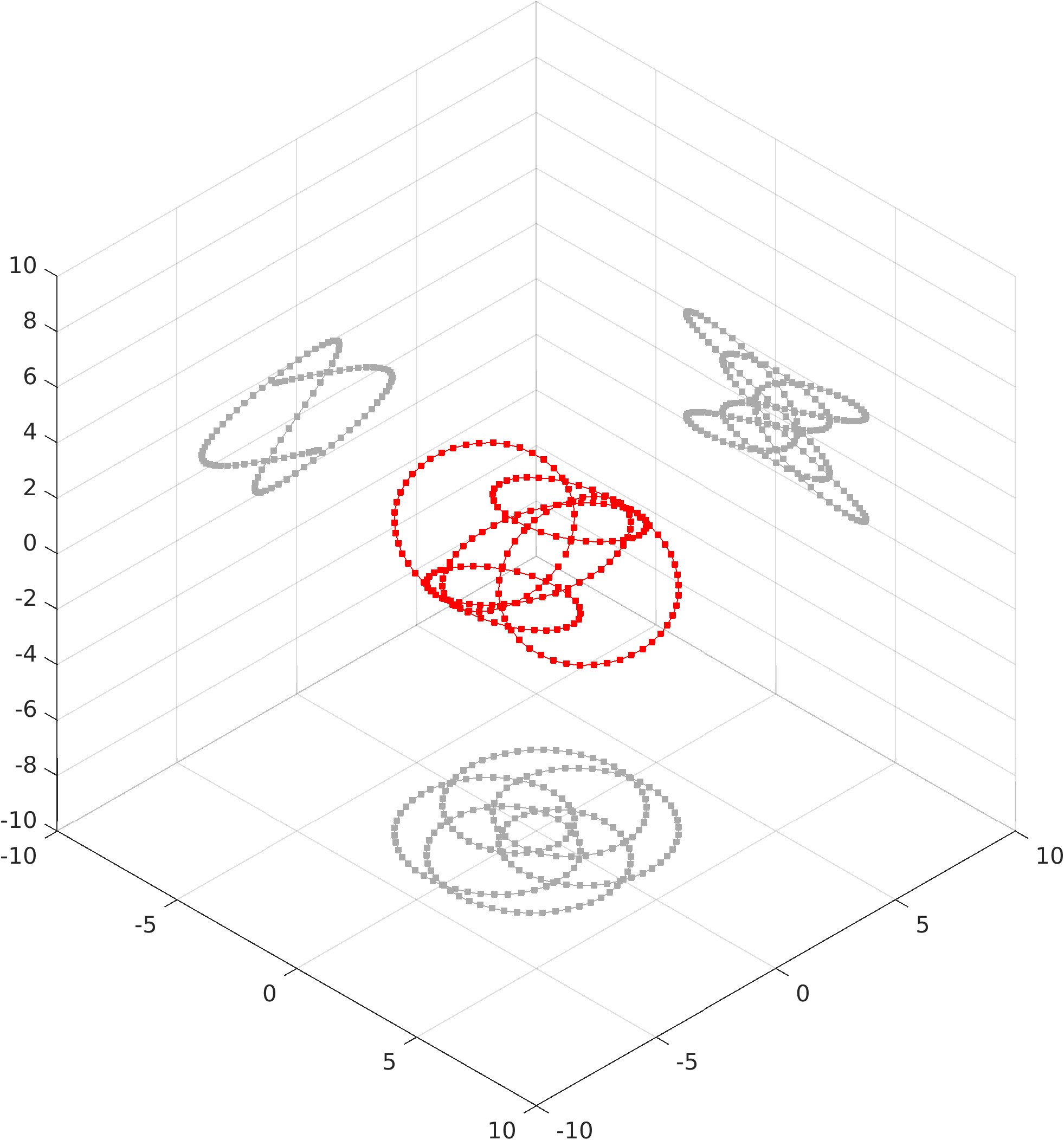} 
 \hfill 
 \includegraphics[width=3.25cm]{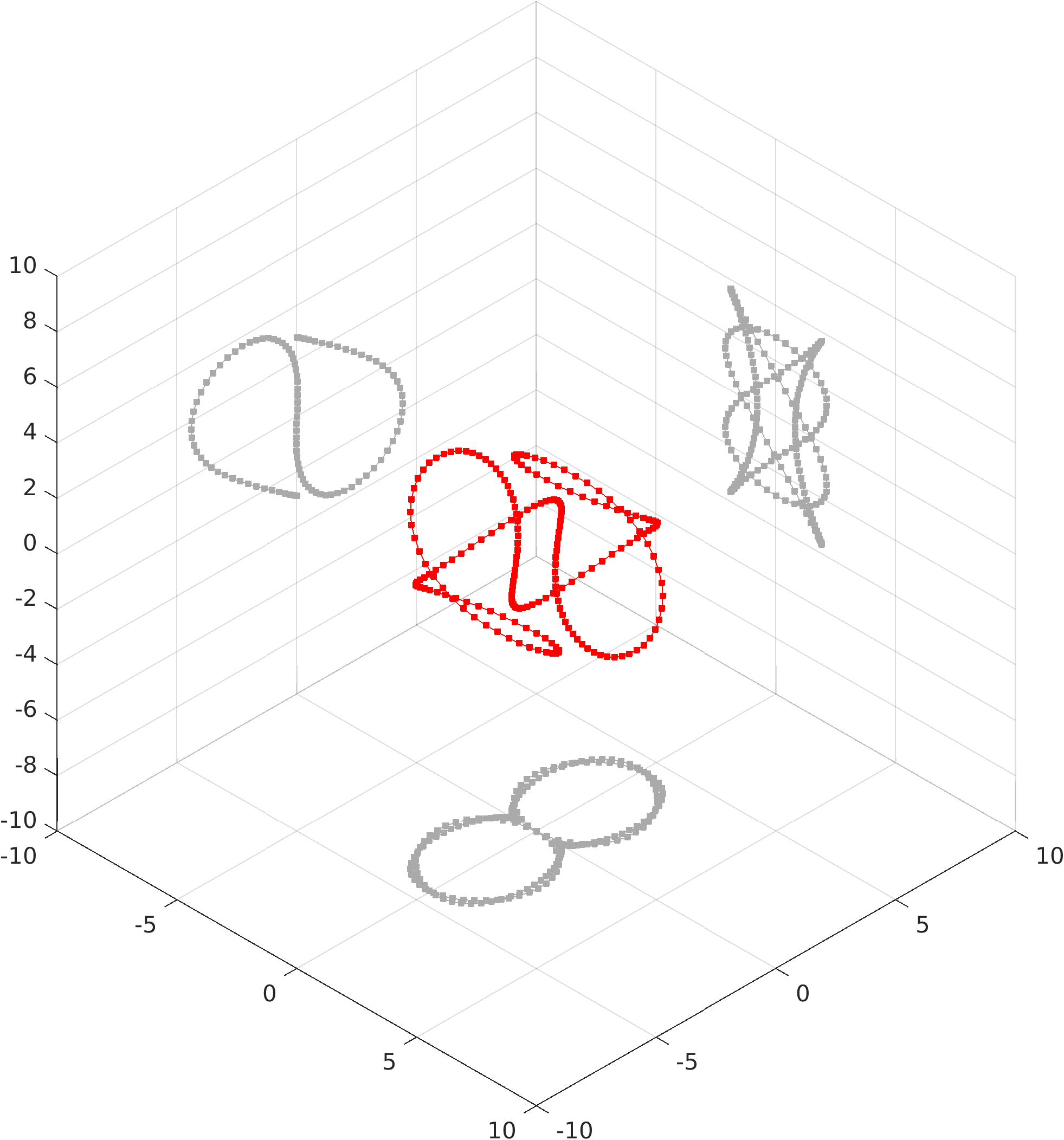} 
 \\
 \includegraphics[width=3.25cm]{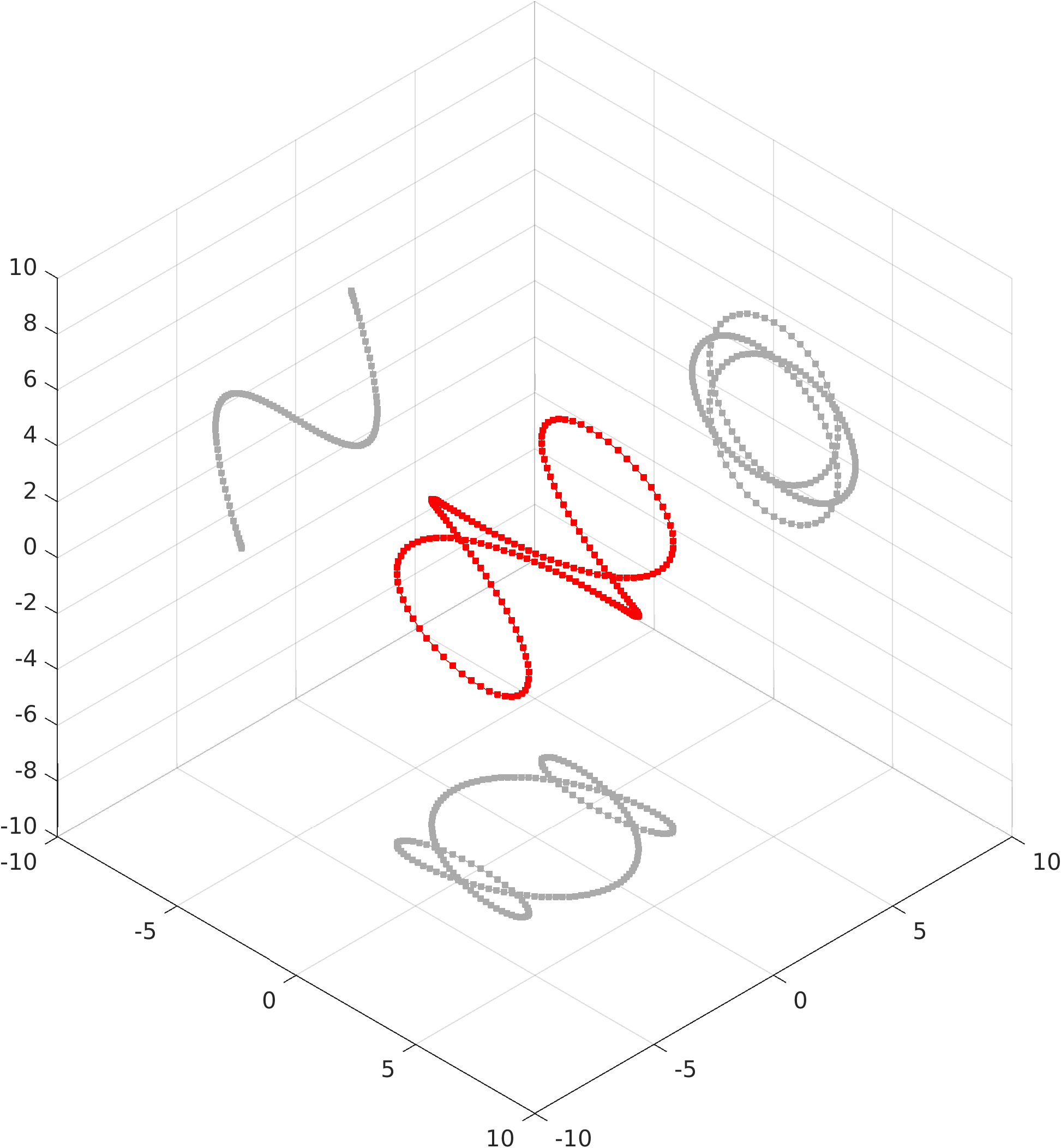} 
 \hfill 
 \includegraphics[width=3.25cm]{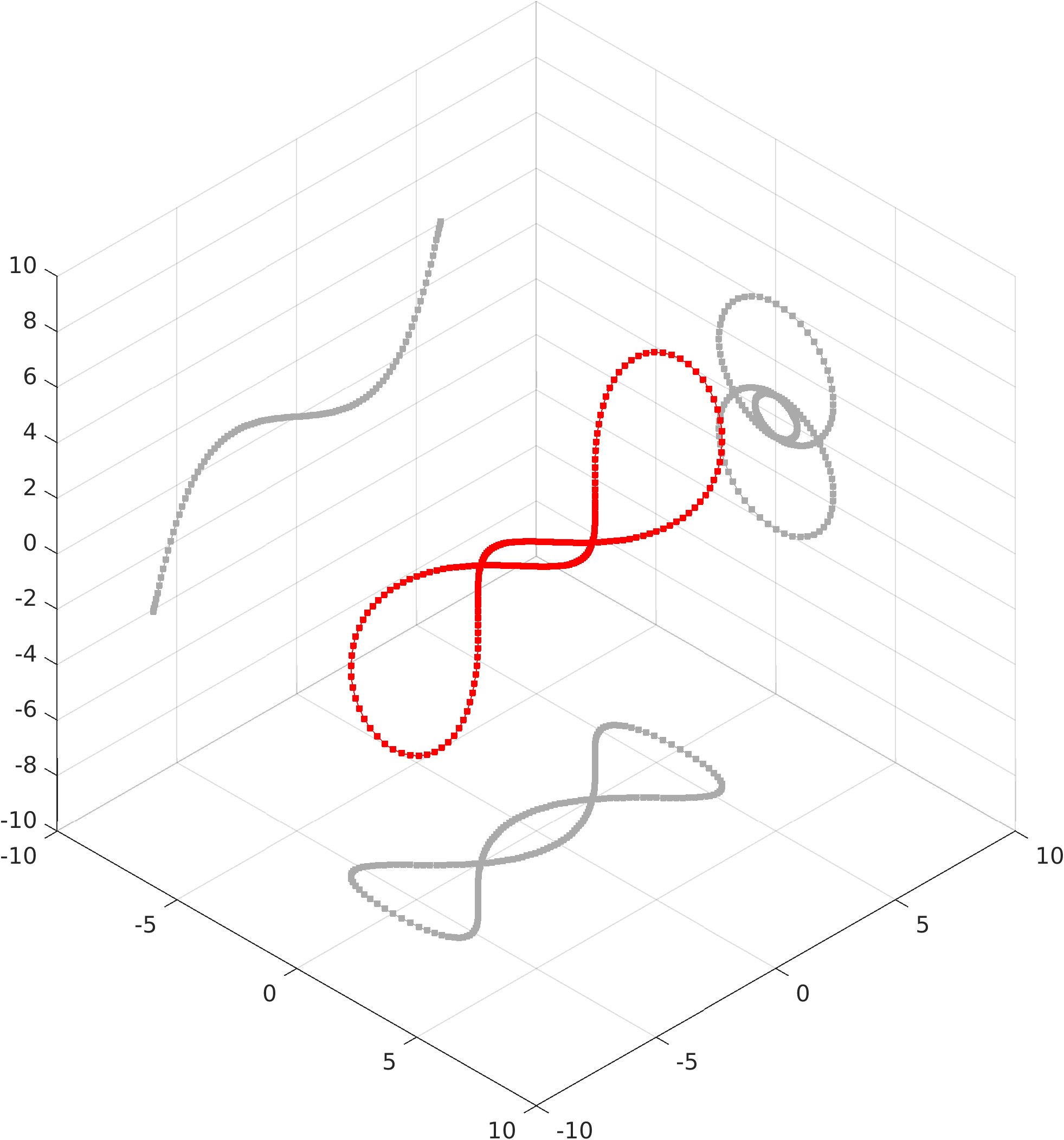} 
 \hfill
 \includegraphics[width=3.25cm]{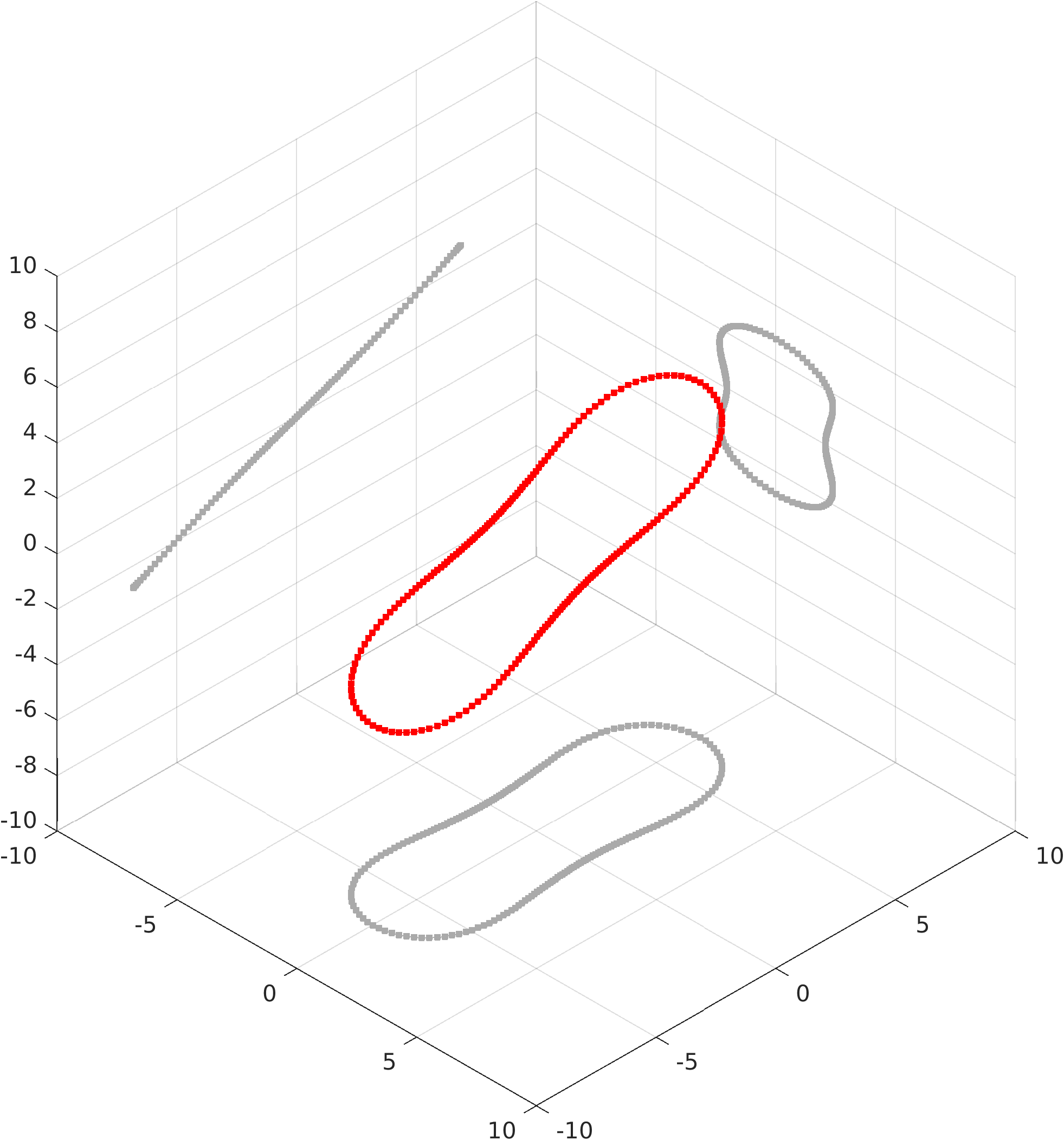} 
 \hfill 
 \includegraphics[width=3.25cm]{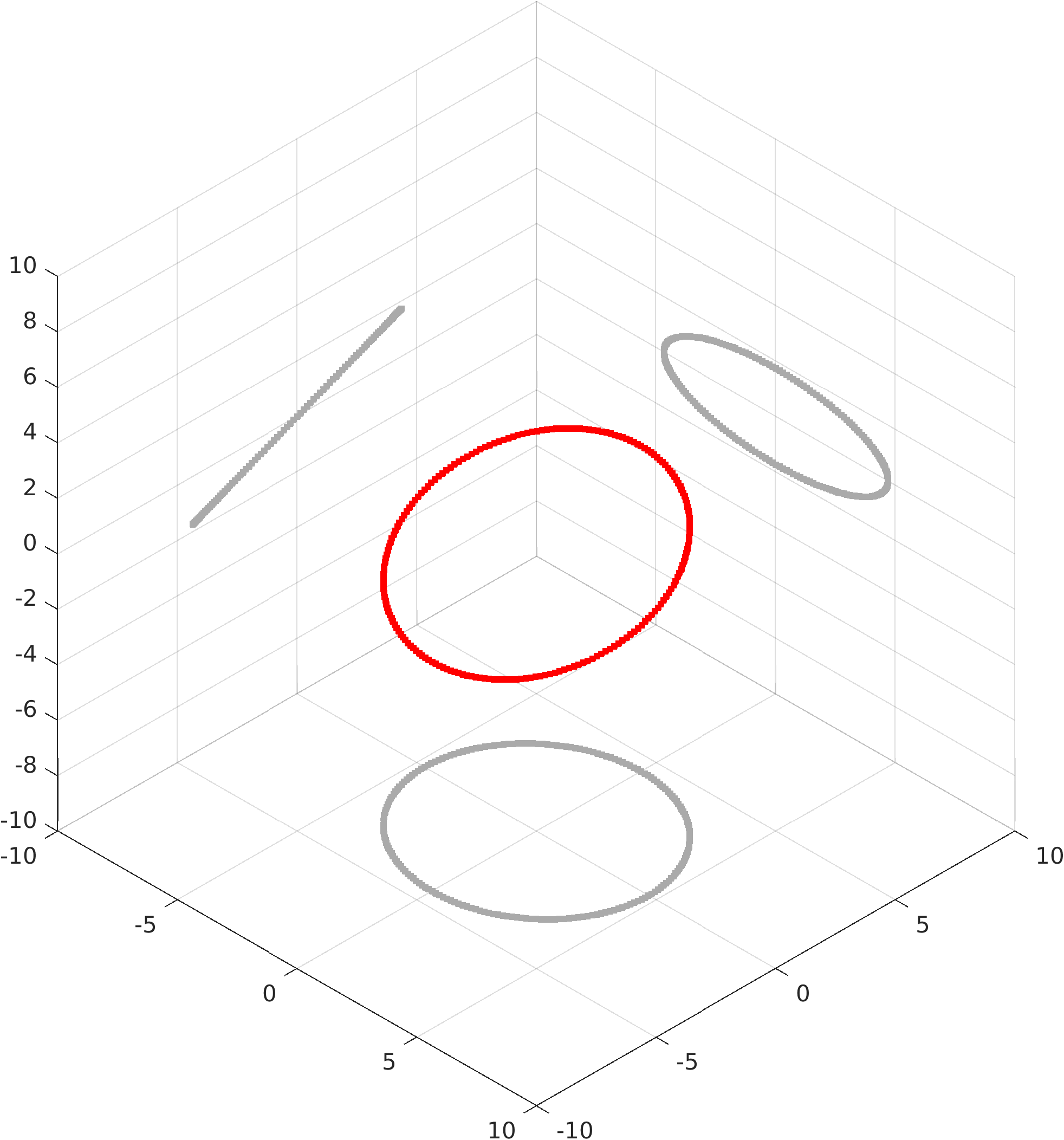} 
 \\
 \includegraphics[width=6.5cm]{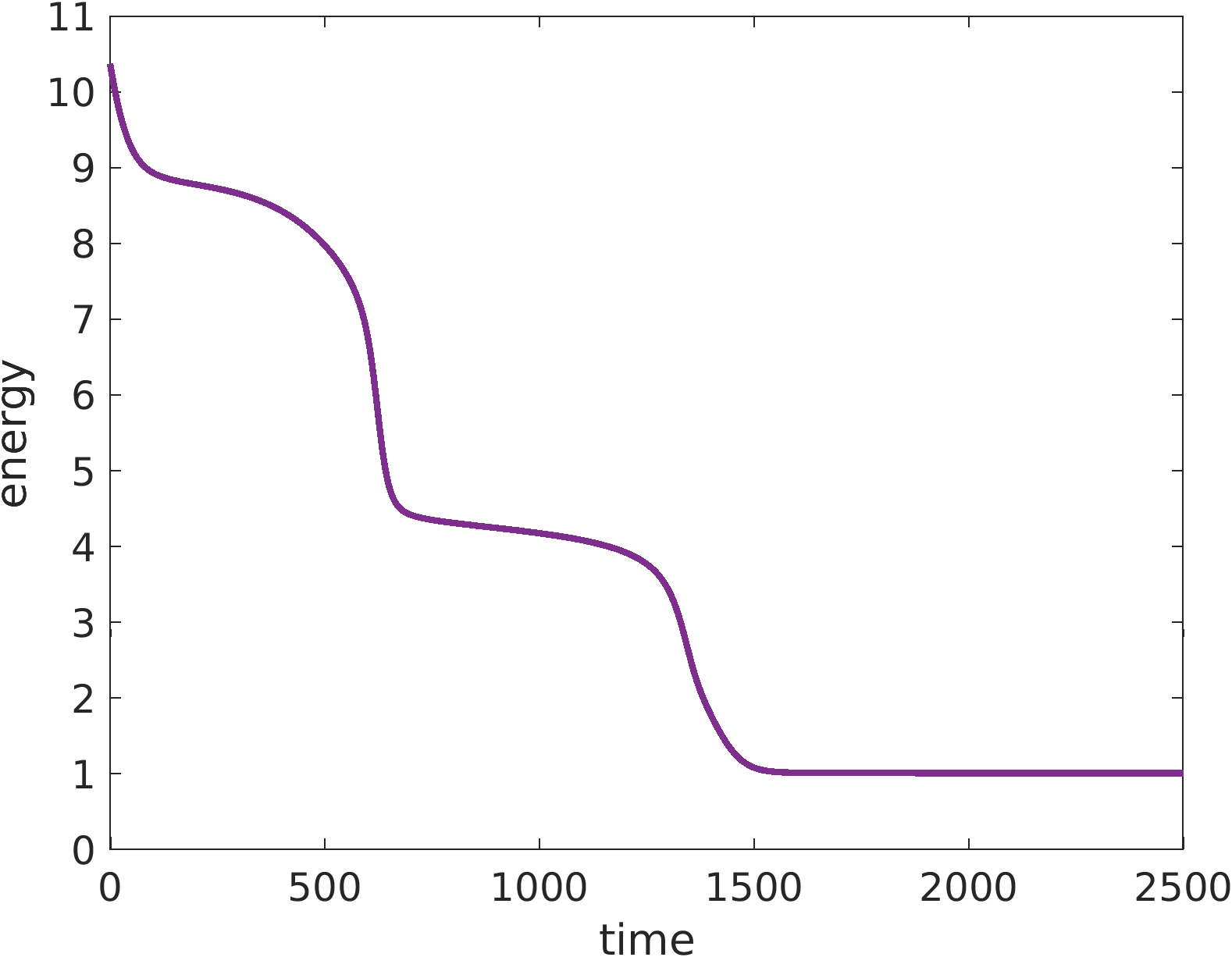} 
 \hfill
 \includegraphics[width=6.5cm]{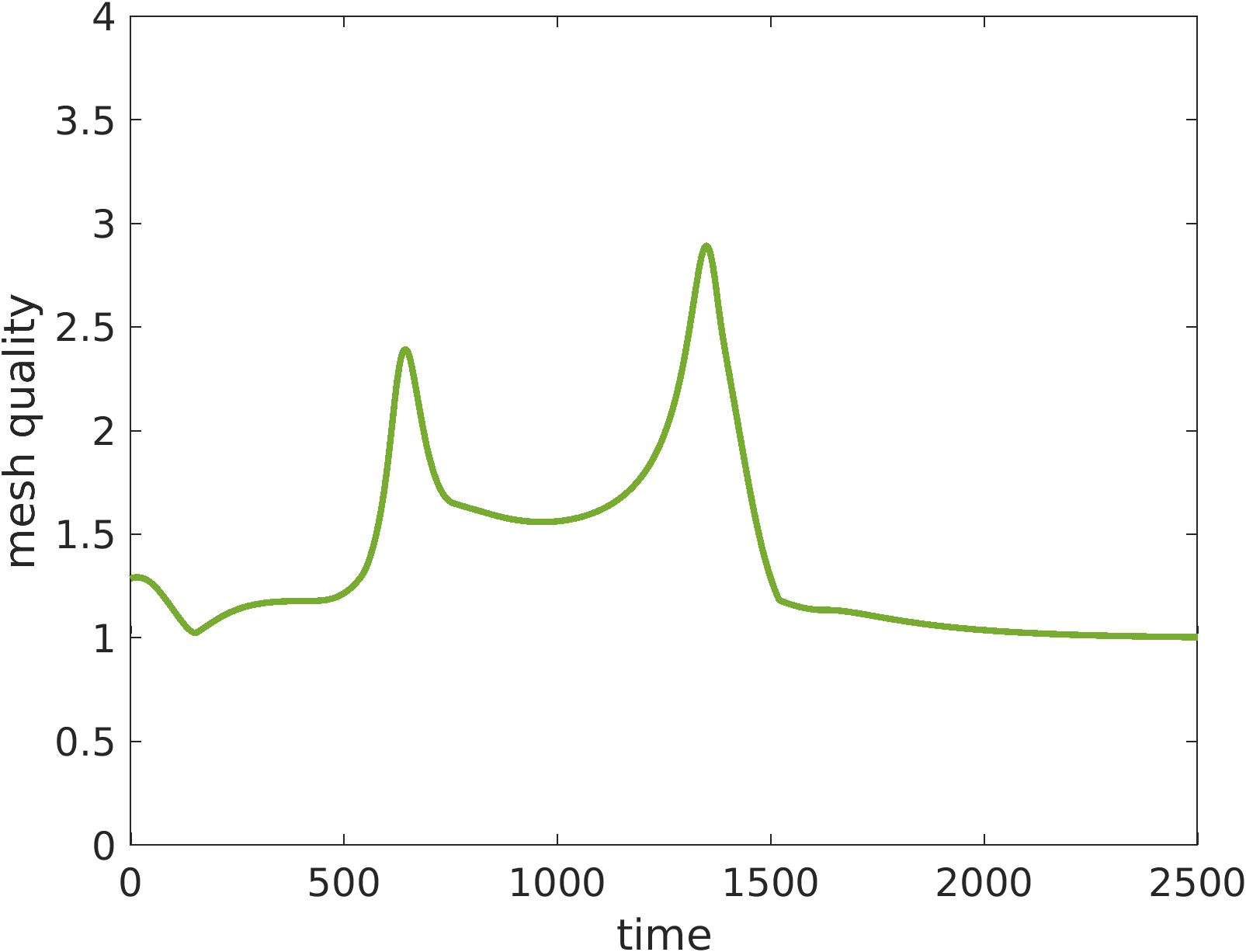} 
\end{center}
\caption{Unreavelling of a perturbed hypocycloid to a simply covered circle in 3D. The shape is displayed in red from the left to the right at times $t=0,100,400,600$ (top) and $t=800,1300,1400,2000$ (middle). The grey curves show the projections to the planes $\{ \xi_1 = -10 \}$, $\{ \xi_2 = 10 \}$, and $\{ \xi_3 = -10 \}$ (where points in $\R^3$ are denoted with $\xi = (\xi_1,\xi_2,\xi_3)$). The bottom row displays the evolution of the discrete energy \eqref{eq:dis_energ} and the mesh quality \eqref{eq:mesh_qual} for this simulation. See Section \ref{subsec:hpc} for more detail.}
\label{fig:hpc3D}
\end{figure}

We performed some computations similarly to Example 4.3 in \cite{DD09} (see also Sec.~5. in \cite{OlRu10}) to illustrate the differences of the flow's behaviour between two and three spatial dimensions. The initial data are given by 
\[
 x_{0}(u) =  \big{(} -\tfrac{5}{2} \cos(u) + 4 \cos(u), -\tfrac{5}{2} \sin(u) + 4 \sin(u), \alpha \sin(3u) \big{)}
\]
for some $\alpha \in \R$. We chose $\lambda = 0.005$ and $N=200$ mesh points. Our time step size was $\delta = 0.05$ and thus significantly larger than in \cite{DD09}. 

In the case $\alpha = 0$ the component in the direction of the third ($z$-) axis remains zero and the evolution takes place in the plane of the first and second coordinate. In the long term, the shape becomes a five-fold covering of a circle. Figure \ref{fig:hpc2D} gives an impression of the evolution. The figure displays a 2D simulation (ignoring the $z$ component), but it has also been performed in 3D. 

However, in the case $\alpha = \tfrac{1}{2}$ this multiply covered circle is not obtained but the curve unravels to form a simple circle. Figure \ref{fig:hpc3D} gives an impression of the evolution in this case. Also, the evolution of the energy \eqref{eq:dis_energ} and the mesh quality \eqref{eq:mesh_qual} are displayed. The energy decays in 'steps' at around $t=650$ and $t=1400$, indicating times of a fairly fast evolution. We can see that the mesh quality parameter $\sigma$ peaks at these times but becomes lower again after and 
approaches one (equidistribution of mesh points) 
towards the end.

\subsection{Extensions}
\label{subsec:var}

We briefly discuss two extensions of our novel method 
and showcase their impact but without analyzing them in detail.
Our focus here is on modifications of the $L^{2}$-gradient flow for the geometric functional \eqref{eq:energylength}.

\begin{enumerate}
 \item 
 Picking up an idea from \cite{PozSti2021} 
 (which is on planar curves)
 we may split up the velocity in its normal and tangential part, $x_{t} = P x_{t} + (x_{t} \cdot \tau) \tau$ and then scale all terms with tangential contributions with a small parameters $0 < \eps \ll 1$. Equivalently, we consider a weighted $L^2$ gradient flow 
 for
\begin{align}\label{ELD}
 \mathcal{E}(x) + \tilde{\lambda} \mathcal{L}(x) + \epsilon \mathcal{D}(x), 
\end{align}
 where the tangential part has an $\eps$ weighting. 
 If we do this then we obtain (see also \eqref{eq:wf1}, \eqref{eq:wf2} below with $M=1$) 
 \[
  P x_{t} + \eps (x_{t} \cdot \tau) \tau = -\nabla_{s}^{2} \vec{\kappa} - \frac{1}{2} |\vec{\kappa}|^{2} \vec{\kappa}  + \tilde{ \lambda} \vec{\kappa}  + \eps   \frac{x_{uu}}{|x_{u}|} . 
 \]
 In effect, the tangential movement of mesh points that is beneficial for the mesh quality and due to the Dirichlet energy contribution in \eqref{newFun} is kept. 
 We have not analyzed this idea but refer to \cite{PozSti2021}, where a network of curves moving by curve shortening flow is investigated. Let us note that, there, constants in error estimates depend in an unfavorable way on $\eps$, as does the conditioning of the linear systems in the fully discrete scheme. 
 We expect this to be the case for the scheme \eqref{eq:wf1dis}, \eqref{eq:wf2dis} (with $M=1$) for the above approach, too.
 
 \item 
 Whilst equi-distribution of mesh points along the curve might generally be desirable, some applications might benefit from more mesh points in certain areas of interest. These could simply be parts of the curve where the curvature is high. Or, when combining the geometric equation with a reaction-diffusion system on the curve to model cell migration (for instance, see \cite{EllStiVen2012}), then the fields on the curve might require a better resolution in some parts. \\
 Drawing on ideas presented in \cite{MacNolRowIns2019} 
 we introduce a monitor function that acts as a weighting in the term arising from the Dirichlet energy. Intuitively, it may be interpreted as kind of a tension parameter. The higher $M$ the higher the attraction between close points. In the discrete setting one will therefore expect that vertices move the closer the higher $M$ is. 
 
 For simplicity, we here only consider a (constant in time) function defined on the ambient space $M : \R^{n} \to (0,\infty)$. 
 
\end{enumerate}

Combinding these ideas, 
the weak formulation reads 
\begin{align} 
 \int_{0}^{2\pi} \big{(} P x_{t} \cdot \phi + \eps (x_{t} \cdot \tau) \tau \cdot \phi \big{)} |x_{u}| - \int_{0}^{2\pi} \Big{(} \frac{P y_{u} \cdot \phi_{u}}{|x_{u}|} + \frac{|y|^{2}}{2} (\tau \cdot \phi_{u} ) \Big{)} \nonumber \\
 + \tilde{\lambda} \int_{0}^{2\pi} \tau \cdot \phi_{u} + \eps \int_{0}^{2\pi} M(x) x_{u} \cdot \phi_{u} &=0, \label{eq:wf1} \\
 \int_{0}^{2\pi} (y \cdot \psi) |x_{u}| + \int_{0}^{2\pi} (\tau \cdot \psi_{u} ) &=0, \label{eq:wf2}
\end{align}
for all $\phi, \psi \in H^{1}_{per}(0, 2\pi)$ and $t \in [0,T]$. The corresponding strong PDE can be split up in normal and tangential part in the form 
\begin{align} \label{eq:weightedflow}
 Px_{t} &= -\nabla_{s}^{2} \vec{\kappa} - \frac{1}{2} |\vec{\kappa}|^{2} \vec{\kappa} + \tilde{\lambda} \vec{\kappa} + \eps P \big{(} M x_{u} \big{)}_{u} \frac{1}{|x_{u}|}, \\
 x_{t} \cdot \tau &= \frac{1}{|x_{u}|} \big{(} M x_{u} \big{)}_{u} \cdot \tau. 
\end{align}
Thus formally as $\epsilon \to 0$ we recover \eqref{gflow}. 
Moreover note that, in general, the above scheme is not a gradient flow due to the term with the monitor function $M$.
We performed some simulations with the following approximation scheme: 

\begin{algo}
Set $x^{(0)} = I_{h} x_{0}$ and successively determine $x_{h}^{(m+1)}$, $y_{h}^{(m+1)} \in X_{h}^{n}$ for $m=0,1,2, \ldots,m_{T}-1$ by solving the linear problem  
\begin{align} 
\int_{0}^{2\pi} \left ( P_{h}^{(m)} \frac{x_{h}^{(m+1)} - x_{h}^{(m)}}{\delta} \cdot \phi_{h} + \eps \Big{(} \frac{x_{h}^{(m+1)} - x_{h}^{(m)}}{\delta} \cdot \tau_{h}^{(m)} \Big{)} \tau_{h}^{(m)} \cdot \phi_{h} \right ) |x_{hu}^{(m)}|  \nonumber \\
- \int_{0}^{2\pi} \left ( \frac{P_{h}^{(m)} y_{hu}^{(m+1)} \cdot \phi_{hu}}{|x_{hu}^{(m)}|} + \frac{1}{2} I_{h}(|y_{h}^{(m)}|^{2}) \Big{(} \frac{x_{hu}^{(m+1)}}{|x_{hu}^{(m)}|} \cdot \phi_{hu} \Big{)} \right ) \nonumber \\
+ \tilde{\lambda} \int_{0}^{2\pi} \frac{x_{hu}^{(m+1)}}{|x_{hu}^{(m)}|} \cdot \phi_{hu} + \eps \int_{0}^{2\pi} I_{h} (M) x_{hu}^{(m+1)} \cdot \phi_{hu} =&0, \label{eq:wf1dis} \\
\int_{0}^{2\pi} I_{h}(y_{h}^{(m+1)} \cdot \psi_{h}) |x_{hu}^{(m)}| + \int_{0}^{2\pi} \Big{(} \frac{x_{hu}^{(m+1)}}{|x_{hu}^{(m)}|} \cdot \psi_{hu} \Big{)} =&0, \label{eq:wf2dis} 
\end{align}
for all test functions $\varphi_{h}, \psi_{h} \in X_{h}^{n}$. 
\end{algo}

We repeated the simulation of the circle with initially non-equidistributed vertices from Section~\ref{subsec:circ} with the same discretization parameters. In addition, we set $M = 1$ and $\eps = 10^{-2}$. The result is displayed in Figure~\ref{fig:circ_nonequi} on the right. 

We observe lateral redistribution of the mesh points similar to result for our new scheme (Figure~\ref{fig:circ_nonequi}, left). In turn, the evolution of the shape and position looks closer to the result from scheme in \cite{DD09} (Figure~\ref{fig:circ_nonequi}, middle). 

\medskip

We also relaxed the nonsymmetric lemniscate from Section~\ref{subsec:lnc} with $\tilde{\lambda} = 0.2$. Here, we set $\eps = 0.004$ and chose 
\begin{equation} \label{eq:monitor}
 M: \R^2 \to (0,\infty), \quad M(\xi_1,\xi_2) = 1 + \frac{(\xi_1-1)^2}{10}. 
\end{equation}
This encourages higher vertex density the further a part of the curve is away from the line $\{ \xi_1 = 1 \}$. The number of vertices $N=100$ was the same but we had to choose a significantly smaller time step size of $\delta = 10^{-5}$ for stability reasons. 

The evolution is displayed in Figure~\ref{fig:lnc}, bottom right. In the final shape at time $t=100$ a lower density of vertices around the intersection point indeed is noticeable, whilst the density looks a bit higher in the curved parts away from the line $\{ \xi_1 = 1 \}$. We also note that the final shape is slightly smaller than the one obtained with the scheme in \cite{BGN2007} (see Figure~\ref{fig:lnc}, top right). However, simulations with several values of $\eps$ revealed that the shapes get closer as $\eps \searrow 0$.

\section{Conclusion}

The elastic flow can be amended in various ways to prevent solutions from continuous growing, adding  the Dirichlet energy as proposed in \eqref{newFun} is one of them. However, this approach has a significant advantage for finite element computations. First, it involves a tangential movement of mesh points of great benefit to the mesh quality during evolution. Second, it provides additional control so that the error due to the finite element discretization can be estimated. We provided such estimates in Theorem \ref{mainteo} in norms that are  natural for the problem. These results were supported by numerical simulations, which also displayed good mesh qualities throughout the evolution. 

The resulting flow is no longer geometric, however, in the sense that the right-hand side of \eqref{1.2} depends on the parametrization of the curves (as opposed to \eqref{flowDKS} or \eqref{gflow}). As an alternative, we briefly looked into extensions of \eqref{gflow} for which the addition of a Dirichlet functional plays the role of an $\epsilon$ perturbation term.
Here some remaining open questions are related 
to the time discretization,  the impact of the small parameter~$\epsilon$ and a monitor function involved in the extensions.

\end{document}